\theoremstyle{plain}
\newtheorem{Thm}{Theorem}[section]
\theoremstyle{definition}
\newtheorem{Def}[Thm]{Definition}
\theoremstyle{plain}
\newtheorem{Lem}[Thm]{Lemma}
\theoremstyle{plain}
\newtheorem{Prop}[Thm]{Proposition}
\theoremstyle{plain}
\newtheorem{Cor}[Thm]{Corollary}
\theoremstyle{remark}
\newtheorem{Rmk}[Thm]{Remark}
\theoremstyle{remark}
\newtheorem*{Ex}{Example}
\theoremstyle{definition}
\DeclareFontFamily{U}{mathc}{}
\DeclareFontShape{U}{mathc}{m}{it}%
{<->s*[1.03] mathc10}{}
\DeclareMathAlphabet{\mathscr}{U}{mathc}{m}{it}
\newcommand{\F}{\mathcal{F}}
\newcommand{\K}{\mathsf{K}}
\newcommand{\Hom}{\mathrm{Hom}}
\renewcommand{\hom}{\mathrm{hom}}
\newcommand{\Th}{\mathcal{T}}
\renewcommand{\S}{\mathcal{S}}
\newcommand{\Coalg}{\mathsf{Coalg_{coc}}}
\renewcommand{\Set}{\mathsf{Set}}
\newcommand{\Hopfc}{\mathsf{Hopf_{coc}}}
\newcommand{\HBr}{\mathsf{HBr_{coc}}}
\newcommand{\Hopfcc}{\mathsf{Hopf_{coc}^{com}}}
\newcommand{\N}{\mathbb{N}}
\newcommand{\ra}{\rightarrow}
\newcommand{\T}[1]{\mathcal{T}(#1)}
\newcommand{\C}{\mathbf{C}}
\def\colim{\qopname\relax m{colim}}
\newcommand{\id}{\mathrm{id}}
\newcommand{\Grp}{\mathsf{Grp}}
\newcommand{\Ab}{\mathsf{Ab}}
\newcommand{\Bialg}{\mathsf{Bialg_{coc}}}
\newcommand{\SKB}{\mathsf{SKB}}
\newcommand{\RadRng}{\mathsf{RadRng}}
\newcommand{\DiGrp}{\mathsf{DiGrp}}
\newcommand{\HRadRng}{\mathsf{HRadRng_{coc}}}
\newcommand{\HDiGrp}{\mathsf{HDiGrp_{coc}}}
\newextarrow{\xbigtoto}{{40}{40}{20}{20}}
   {\bigRelbar\bigRelbar{\bigtwoarrowsleft\rightarrow\rightarrow}}
\DeclareMathOperator{\Hker}{Hker}
\begin{document}
\begin{center}
{\LARGE Coalgebraic models of $\Omega$-groups}

\vspace{0.5cm}
{\large Maria Bevilacqua}

\vspace{0.3cm}
{\large \textsc{Université catholique de Louvain}}

\vspace{0cm}
{ Institute de Recherche en Mathématique et Physique}

\vspace{0cm}
{\small \textit{Chemin du Cyclotron 2, Louvain-la-Neuve, 1348, Belgium}}

\vspace{0.2cm}
\texttt{maria.bevilacqua@uclouvain.be}
\end{center}

{\begin{center}
 \large\textsc{Abstract}
 \vspace{0.3cm}

 \begin{minipage}{0.7\textwidth}
    \small{We investigate models of algebraic theories in the category of cocommutative coalgebras over a field. 
    We establish some of their categorical properties, similar to those of algebraic varieties. We introduce a class of categories of coalgebraic models of algebraic theories endowed with an underlying structure of cocommutative Hopf algebra, and show that these categories are semi-abelian. We call them ``categories of $\Omega$-Hopf algebras'', since it is possible to characterize them as coalgebraic models of algebraic theories of  $\Omega$-groups.
    
\vspace{0.2cm}

    \textit{Keywords}: cocommutative Hopf algebras, cocommutative coalgebras, semi-abelian categories, $\Omega$-groups, Lawvere theories
}
\end{minipage}

\end{center}}


\begin{section}{Introduction}

In the literature, several well-studied algebraic structures are endowed with an underlying structure of coalgebra over a field. This is the case for Hopf algebras ---perhaps the most well-known example--- but also for bialgebras and Hopf braces \cite{AngionoGalindoVendramin}. When the coalgebra structure is cocommutative, these objects can be regarded as product-preserving functors from a Lawvere theory to the category of cocommutative coalgebras, namely as coalgebraic models of a suitable algebraic theory. 

A first goal of this paper is to initiate a systematic investigation of coalgebraic models of algebraic theories, in order to develop a unified framework in which the aforementioned structures fit. It is possible to define algebraic functors between categories of coalgebraic and ---as in the classical setting--- we can think of them as functors forgetting some structure (axioms or operations).
After establishing some categorical and algebraic properties of coalgebraic models and algebraic functors between them, we introduce a class of categories of coalgebraic models with remarkable regularity and exactness properties. These categories arise as categories endowed with a forgetful functor to the category $\Hopfc$ of cocommutative Hopf algebras. Nevertheless, a motivating point in studying them is that they can be regarded as the coalgebraic models for theories of $\Omega$-groups. Algebraic varieties of $\Omega$-groups were introduced by Higgins \cite{Higgins} as a general setting for studying rings, Lie algebras, Jordan algebras, and other group-like structures. Indeed, an $\Omega$-group is a set equipped with a group structure together with additional operations that satisfy the following condition: every operation, when evaluated at the zero element, yields the zero element. In Proposition \ref{Characterization of Omega-groups}, we prove that being a category of models for these algebraic theories, in the coalgebraic context, translates in having a zero object and an underlying structure of cocommutative Hopf algebra. The main theorem we proved in Section \ref{omega groups}---Theorem \ref{semiabelianess of omega hopf algebras}--- shows that these categories inherit the property of being semi-abelian from $\Hopfc$.  

Semi-abelian categories were introduced in \cite{JanelidzeMarkiTholen} as a general framework for the study of homology in non-abelian categories. The aim was to capture some properties of the category of groups and Lie algebras, similarly to
the way the notion of abelian category captures the properties of abelian
groups and modules over a ring. Indeed, semi-abelian categories can be regarded as a weaker version of abelian categories, in which the additivity does not hold in general. More precisely, a semi-abelian category is a pointed category that is Barr-exact, protomodular \cite{Bourn1991Normalization} (that is equivalent, in this context, to the validity of the Split Short Five Lemma) and has binary coproducts. Groups, loops, Lie algebras, cocommutative Hopf algebras, or crossed modules are some important examples of semi-abelian categories. 

As we mentioned before, in this manuscript, we provide a new class of examples of semi-abelian categories. Since they arise as coalgebraic models for Lawvere theories of $\Omega$-groups, we dedicate the first part of the article to describing the general process of passing from set-theoretic models of algebraic theories to coalgebraic ones. 
We motivate this process by mentioning a lot of examples of categories that can be regarded as categories of coalgebraic models for suitable algebraic theories. Table \ref{Table of examples} summarizes all the examples compared with the respective models in $\Set$.

As in the case of algebraic varieties, an important role is played by terms, which are, in this coalgebraic setting, homomorphisms of coalgebras. Following \cite{Izquierdo}, we call them ``linearized terms''. It turns out that linearized terms encode structural properties of the corresponding category of coalgebraic models: for instance, protomodularity can be characterized by means of linearized terms, in the same spirit as the characterization of protomodular varieties of universal algebra \cite{borceuxbourn}. We show this result in Corollary \ref{linearized protomod} and in Proposition \ref{Protomodularity mal'tsev condition}.

Further analogies between classical and coalgebraic models also arise.  In Section \ref{section free functor}, we describe the construction of left adjoints of algebraic functors and, in Section \ref{Section Colimts}, we provide an explicit construction of colimits in a category of coalgebraic models. 

Next, in Section \ref{section full functors}, we study algebraic functors induced by surjective morphisms of theories ---it is the case, for instance, of the algebraic functor from $\Hopfc$ to the category of commutative and cocommutative Hopf algebras. Left adjoints to these algebraic functors turn out to have an easier description as suitable quotients (we also apply this construction in some concrete examples). Furthermore, in analogy with the classical case, an algebraic functor induced by surjective maps of theories determines a Birkhoff subvariety, as shown in Proposition \ref{surj determines Birkhoff subvariety}.  

The final section builds on the well-known fact that, under the cocommutativity assumption, the category $\Hopfc$ of Hopf algebras enjoys many exactness properties and, in particular, it is semi-abelian \cite{GranSterckVercruysse}. The same holds for the category $\HBr$ of cocommutative Hopf braces, as recently proved in \cite{GranSciandra}. Interestingly, we notice that the crucial ingredient needed in the proof of semi-abelianness of $\HBr$ is the existence of a forgetful functor to $\Hopfc$. Motivated by this observation, we generalize this result by introducing a new class of semi-abelian categories characterized by the presence of an underlying cocommutative Hopf algebra structure. Since these categories correspond precisely to coalgebraic models of theories of $\Omega$-groups, we call them ``categories of $\Omega$-Hopf algebras''. Theorem \ref{semiabelianess of omega hopf algebras} leads to new examples of semi-abelian categories: for instance, the category of ``Hopf digroups'' (introduced in \cite{agore2025categoryhopfbraces}) and the category of ``Hopf radical rings''. Some of the aforementioned results are applied to investigate them.
    
\end{section}

\begin{section}{Linearizing process}\label{section
linearization}

In this section, we introduce a unified framework in which some categories whose objects have the property of having an underlying coalgebra structure can be studied. The idea comes from the third section of \cite{Izquierdo}, where the author described what he called ``A linearizing process". Given a type of algebras (in the universal algebra sense) P\'erez-Izquierdo associates, with a classical term in an algebraic theory, a linearized one that is a homomorphism of coalgebras. This process describes, for instance, how identities of groups move to identities of Hopf algebras, or how identities of loops move to those of non-associative Hopf algebras ---the latter being the case examined in \cite{Izquierdo}. 

We briefly recall this construction and explain why it leads us to consider categories of product-preserving functors from a Lawvere theory to the category of cocommutative and coassociative coalgebras over a field $\K$.

A cocommutative coalgebra over $\K$ is a triple $(C,\Delta,\epsilon)$, where $C$ is a $\K$-vector space, $\Delta$ and $\epsilon$ are linear maps $\Delta\colon C\to C\otimes C$ and $\epsilon\colon C\to \K$ ---called comultiplication and counit--- satisfying:
\begin{align*}
    (\Delta\otimes \id)\circ\Delta=&(\id\otimes\Delta)\circ\Delta,\\
    (\epsilon\otimes \id)\circ\Delta=\id&=(\id\otimes\epsilon)\circ\Delta,\\
    \tau\circ \Delta=&\Delta,
\end{align*}
where $\tau$ is the map that twists the two components of the tensor product.
A homomorphism of coalgebras $f\colon (A,\Delta^A,\epsilon^A)\ra (B,\Delta^B,\epsilon^B)$ is a linear map $f\colon A\to B$ such that $(f\otimes f)\circ \Delta^A=\Delta^B\circ f$ and $\epsilon^B\circ f=\epsilon^A$. We denote by $\Coalg$ the category of cocommutative coalgebras over $\K$ with homomorphisms of coalgebras between them. The category $\Coalg$ is a braided monoidal category, with braiding $\tau$.
Any properties of this category have been studied in \cite{GrunenfelderPare}.

We refer to \cite{BurrisSankappanavar1981} for some general notions of universal algebra, in particular those of algebras, operations, and terms.

Let $\F$ be a \textit{type of algebras}, namely a set of function symbols together with an assignment of a non-negative integer for each function symbol of $\F$, called \textit{arity}. We call $\F$-\textit{algebra} a set $A$ with a family of finitary operations on $A$ indexed on $\F$, such that, corresponding to each function symbol $f$ in $\F$, there is an $n$-ary operation $f^A$ on $A$, where $n$ is the arity of $f$.

\begin{Def}
    An $\F$-coalgebra is a cocommutative coalgebra $(C,\Delta,\epsilon)$ in $\Coalg$ that is also an $\F$-algebra such that every operation $f$ of arity $n$ induces a homomorphism of coalgebras from the $n$-th power of $C$ to $C$, that we denote as $f^C\colon C^{\otimes n}\rightarrow C$.
\end{Def}

\begin{Rmk}
    If $C$ is an $\F$-coalgebra and $V$ a coassociative and cocommutative coalgebra with comultiplication $\Delta$, then the vector space $\Hom(V,C)$ is an $\F$-algebra with operations defined by:
    $$f^{\Hom(V,C)}(\alpha_1,\dots,\alpha_k)\coloneqq f^C\circ(\alpha_1\otimes \dots\otimes\alpha_k)\circ \Delta^{(k-1)}$$
    for every $f$ of arity $k$ and every $\alpha_1,\dots,\alpha_k\in\Hom(V,C)$, where $\Delta^{(k-1)}$ denotes the $k-1$-iteration of the comultiplication.
    
\end{Rmk}

Let $X=\{x_1,\dots,x_n\}$ be a set of variables and let $T(X)$ be the term algebra. We define the homomorphism of algebras:
\begin{align*}
    l_n\colon T(X)&\longrightarrow \Hom(C^{\otimes n},C)\\
    x_i&\longmapsto \epsilon\otimes\dots\otimes \mathrm{id}\otimes\dots\otimes \epsilon,
\end{align*}
where the identity morphism $\mathrm{id}\colon C\to C$ lies in the $i$-th place.
If $s$ and $t$ are two terms in $T(X)$, we call the maps $l_n(s)$ and $l_n(t)$ \textit{linearized terms} and we say that an $\F$-coalgebra \textit{satisfies the equation} $s\approx t$ if and only if $l_n(s)=l_n(t)$.

\begin{Ex}
    If $\F$ is the type with an operation symbol $\cdot$ of arity $2$, the image of the term $x\cdot x$ is the map sending an element $c$ to the element $c_1\cdot^{C}c_2$, where we are using the Sweedler's notation and $\cdot^{C}$ is the operation interpreted in $C$ (we will omit the superscript when the context makes the interpretation clear).
\end{Ex}


A key result proved in \cite[Theorem 13]{Izquierdo} is the following one.

\begin{Thm}{\label{Izq}}
    Let $\Sigma$ be a set of equations and $s\approx t$ a consequence of $\Sigma$. If an $\F$-coalgebra $C$ satisfies all identities in $\Sigma$, then $l_n(s)=l_n(t)$, i.e. it satisfies $s\approx t$.
\end{Thm}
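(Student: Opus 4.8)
The plan is to reduce the statement about linearized terms to the purely syntactic deduction calculus for equational logic, using the fact that $l_n$ is a homomorphism of $\F$-algebras and that the ``satisfies'' relation for $\F$-coalgebras is closed under the rules of that calculus. First I would recall Birkhoff's completeness theorem: $s\approx t$ is a consequence of $\Sigma$ (semantically) if and only if $s\approx t$ is derivable from $\Sigma$ using the five rules of equational deduction — reflexivity, symmetry, transitivity, congruence (replacement of subterms), and substitution (of terms for variables). So it suffices to show that the class of equations satisfied by a fixed $\F$-coalgebra $C$ — meaning the equations $u\approx v$ in variables $x_1,\dots,x_n$ with $l_n(u)=l_n(v)$ — is closed under each of these five rules. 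Then an easy induction on the length of a derivation of $s\approx t$ from $\Sigma$ finishes the argument.

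The key steps, in order: (1) Fix $n$ large enough to contain all variables appearing in $\Sigma$ and in $s\approx t$; since $l_m$ and $l_n$ are compatible under the inclusion of variable sets (a term in fewer variables, viewed in more variables, gets tensored with extra copies of $\epsilon$, and $\epsilon$ is a coalgebra map), working with a single $n$ is harmless. (2) Reflexivity, symmetry and transitivity are immediate, since $l_n(u)=l_n(v)$ is an honest equality of maps $C^{\otimes n}\to C$. (3) For the congruence rule, suppose $u_i\approx v_i$ is satisfied for $i=1,\dots,k$ and $f$ has arity $k$; because $l_n$ is a homomorphism of $\F$-algebras, $l_n(f(u_1,\dots,u_k)) = f^{\Hom(C^{\otimes n},C)}(l_n(u_1),\dots,l_n(u_k))$, and this depends only on the tuple $(l_n(u_1),\dots,l_n(u_k))$, so it equals $l_n(f(v_1,\dots,v_k))$. (4) For the substitution rule, I would show that if $u\approx v$ is satisfied and $\sigma$ substitutes terms $p_1,\dots,p_n$ for $x_1,\dots,x_n$, then $l_n(u\sigma)$ is obtained from $l_n(u)$ by precomposition with the coalgebra map $(l_n(p_1)\otimes\cdots\otimes l_n(p_n))\circ\Delta^{(n-1)}\colon C^{\otimes n}\to C^{\otimes n}$; since this operator is built from $l_n(p_1),\dots,l_n(p_n)$ alone, $l_n(u)=l_n(v)$ forces $l_n(u\sigma)=l_n(v\sigma)$. (5) Conclude by induction on derivation length that $l_n(s)=l_n(t)$.

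I expect step (4) — the substitution rule — to be the main obstacle, because it is the only place where one must track how the comultiplication $\Delta$ distributes a single input across several term-slots, and proving the ``substitution lemma'' $l_n(u\sigma) = l_n(u)\circ\bigl((l_n(p_1)\otimes\cdots\otimes l_n(p_n))\circ\Delta^{(n-1)}\bigr)$ requires a careful induction on the structure of the term $u$, using coassociativity, cocommutativity, and the counit axioms to match up the iterated comultiplications on both sides. (Indeed, this is essentially the content of \cite[Theorem 13]{Izquierdo}, and the cleanest route may simply be to invoke that result; but if one wants a self-contained argument, the substitution lemma is where all the coalgebraic bookkeeping lives.) Everything else is formal: the homomorphism property of $l_n$ handles congruence for free, and the equational-logic skeleton handles the rest.
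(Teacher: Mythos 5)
This statement is not proved in the paper at all: it is imported verbatim as \cite[Theorem 13]{Izquierdo}, so the only comparison available is with P\'erez-Izquierdo's argument, and your outline is essentially a reconstruction of it --- reduce semantic consequence to derivability via Birkhoff completeness and check closure of the satisfaction relation under the five deduction rules, with all the real work in the substitution rule. Your identification of the substitution lemma as the crux is accurate; the one auxiliary fact you should state and prove explicitly (by induction on the structure of terms) is that every linearized term $l_n(u)$ is itself a homomorphism of coalgebras, since this is needed both for the base case ($\epsilon\circ l_n(p_j)=\epsilon$, so the counit axioms collapse the unused legs) and for the inductive step (the operator $(l_n(p_1)\otimes\cdots\otimes l_n(p_n))\circ\Delta^{(n-1)}$ is a coalgebra map), and it is exactly the point where cocommutativity --- i.e.\ the tensor product being the categorical product in $\Coalg$ --- enters the argument.
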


Let $\Sigma $ be an equational theory and let $\Th$ be the corresponding Lawvere theory \cite{LAWVERE2014413}(or algebraic theory), whose objects are the powers of $T$. There is a one-to-one correspondence between
$\F$-coalgebras satisfying all equations in $\Sigma$ and functors from $\Th$ to $\Coalg$ that preserve finite products. Any $\F$-coalgebra $(C,\Delta,\epsilon)$ is sent to the functor
\begin{align*}
    \mathbf{C}\colon \Th&\longrightarrow \Coalg\\
    T^n&\longmapsto C^{\otimes n}\\
    (f\colon T^n\rightarrow T)&\longmapsto (f^C\colon C^{\otimes n}\rightarrow C)
\end{align*}
which is well-defined by Theorem \ref{Izq}. Notice that ---in the definition of this functor--- we could not drop the cocommutativity assumption about the category of coalgebras, since it ensures that the categorical product is actually the tensor product, namely that the category is cartesian. Conversely, given a functor $\mathbf{C}\colon \Th\to \Coalg$, we can send it to $\mathbf{C}(T)$. It is straightforward to see that this correspondence is functorial and yields an isomorphism of categories.

\vspace{0.3cm}
Let us denote by $[\Th,\Set]$ the category of product-preserving functors from $\Th$ to $\Set$, i.e. the category of set-theoretic \textit{models of} $\Th$ and by $[\Th,\Coalg]$ the category of product-preserving functors from $\Th$ to $\Coalg$, that is the category of \textit{coalgebraic models of the Lawvere theory} $\Th$. Furthermore, we call an object in $[\Th,\Coalg]$ a \emph{$\Th$-coalgebra}.

\begin{subsection}{Examples}\label{Examples}
    Several well-known categories fit into the framework just introduced, namely can be regarded as categories of coalgebraic models of suitable algebraic theories.

    Our prototypal example is the category $\Hopfc$ of cocommutative Hopf algebras over a field, whose properties have been widely studied. Indeed, a cocommutative Hopf algebra can be viewed as a product-preserving functor from the algebraic theory $\Th_{\Grp}$ of groups to $\Coalg$. More explicitly, a group can be regarded as a set $G$ with functions 
    $$\cdot\;\colon G\times G\ra G\quad\quad 1\colon \{\star\}\ra G\quad\quad (-)^{-1}\colon G\ra G$$
    satisfying the equations:
    \begin{align*}
        x\cdot(y\cdot z)=&(x\cdot y)\cdot z,\\
        x\cdot 1=x&=1\cdot x,\\
        x\cdot x^{-1}=1&=x^{-1}\cdot x.        
    \end{align*}
    In the same way, a cocommutative Hopf algebra is a cocommutative coalgebra $(H,\Delta,\epsilon)$ together with coalgebra homomorphisms 
    $$m\colon H\otimes H\ra H,\quad\quad u\colon \K\ra H,\quad \quad S\colon H\ra H$$
    satisfying the identities:
    \begin{align*}
        m(a\otimes m( b\otimes c))=& m(m(a\otimes b)\otimes c),\\
        m(u(1_\K)\otimes a)=a&=m(a\otimes u(1_\K)),\\
        m(a_1\otimes S(a_2))=u(\epsilon&(a))=m(S(a_1)\otimes a_2),
    \end{align*}
    which are the linearized version of the previous equations.

    Similarly, the category $\Bialg$ of cocommutative bialgebras arises as the category of coalgebraic models of the Lawvere theory of monoids, while the category $\Hopfcc$ of commutative and cocommutative Hopf algebras can be regarded as the category of coalgebraic models of the Lawvere theory of abelian groups. Other examples from the non-associative setting include $H$-bialgebras and Moufang $H$-bialgebras (\cite{Izquierdo,Zhelyabin}), which are, respectively, the coalgebraic models of the theories of loops and Moufang loops. We denote these two categories respectively as $\mathsf{Loops}$ and $\mathsf{MLoops}$ and the corresponding categories of coalgebraic models as $\mathsf{HBialg_{coc}}$ and $\mathsf{MHBialg_{coc}}$.

    Furthermore, the category $\HBr$ of cocommutative Hopf braces \cite{AngionoGalindoVendramin} can be seen as the category of coalgebraic models of the algebraic theory of skew braces, introduced in \cite{GuarnieriVendramin}.
    We recall some definitions below.

    \begin{Def}[\cite{GuarnieriVendramin}]
        A \textit{skew brace } is a triple $(A,+,\cdot)$ where $(A,+)$ and $(A,\cdot)$ are groups such that the following compatibility condition:
        $$a\cdot(b+c)=a\cdot b-a+a\cdot c$$
        holds for all $a,b,c$ in $A$.
        We denote by $\SKB$ the category of skew braces (where arrows are functions that are homomorphisms with respect to both the two group structures) and by $\Th_{\SKB}$ its corresponding Lawvere theory.
    \end{Def}

    \begin{Def}
    A (cocommutative) \textit{Hopf brace} is a coalgebraic model of the Lawvere theory of skew braces. Namely, it is $(H,\cdot,\bullet,1,\Delta,\epsilon,S,T)$ where $(H,\cdot,1,\Delta,\epsilon,S)$ and $(H,\bullet,1,\Delta,\epsilon,T)$ are (cocommutative) Hopf algebras satisfying:
    $$a\bullet(b\cdot c)=(a_1\bullet b)\cdot S(a_2)\cdot (a_3\bullet c).$$
    We denote by $\HBr$ the category of cocommutative Hopf braces.
    \end{Def}

    Beyond these already known categories, this linearizing process paves the way for defining new, potentially interesting categories by taking models of arbitrary Lawvere theories in coalgebras instead of in $\Set$.
    In this spirit, we now define two categories (closely related to Hopf braces) whose properties will be analyzed in the following sections. In order to introduce them, we recall the definitions of some algebraic varieties.

    \begin{Def}[\cite{Bourn2000}]
        A \textit{digroup} is a triple $(A,+,\cdot)$ where $(A,+)$ and $(A,\cdot)$ are groups sharing the same neutral element. We denote by $\DiGrp$ the category of digroups, where the maps are homomorphisms of digroups, i.e. functions that are homomorphism of groups with respect to both structures. Furthermore, we denote by $\Th_\DiGrp$ the Lawvere theory of this algebraic variety.
    \end{Def}

    \begin{Def}[\cite{Rump}]
        A \textit{radical ring} is a skew brace $(A,+,\cdot)$ satisfying the identities:
        \begin{align*}
            (a+b)\cdot c=&a\cdot c-c+ b\cdot c\\
            a+b=&b+a.
        \end{align*}
        We denote by $\RadRng$ the category of radical rings (with homomorphisms of digroups) and by $\Th_\RadRng$ its corresponding Lawvere theory.
    \end{Def}
    We mention that this is a convenient reformulation (see \cite{Rump}) of the identities of a radical rings for the purpose of this work.

    Looking at algebraic varieties, there is a chain of functors of Lawvere theories:
    $$\Th_\DiGrp\xrightarrow{\hspace{0.8cm}}\Th_\SKB\xrightarrow{\hspace{0.8cm}}\Th_\RadRng.$$
    From the previous chain, one gets the following chain of algebraic functors 
    \begin{equation}\label{chain of inclusions}
        \RadRng\xrightarrow{\hspace{0.5cm}}\SKB\xrightarrow{\hspace{0.5cm}}\DiGrp,
    \end{equation}
    where the first two arrows are inclusions of Birkhoff varieties whose properties have been studied in \cite{GranLetourmyVendramin}. In light of the good categorical properties of $\HBr\cong[\Th_\SKB,\Coalg]$ ---e.g. its semi-abelianness proved in \cite{GranSciandra}--- it is also interesting to analyze the linearized version of $\RadRng$ and of $\DiGrp$. Hence, we define the category $\HRadRng$ of \textit{Hopf radical rings}, that is precisely the category $[\Th_\RadRng,\Coalg]$ and the category $\HDiGrp$ of \textit{Hopf digroups} that is $[\Th_\DiGrp,\Coalg]$.

    More explicit definitions of these categories can be formulated in the following way.

    \begin{Def}[\cite{agore2025categoryhopfbraces}]
       A \textit{(cocommutative) Hopf digroup} is the datum of $(H,\cdot,\bullet,1,\Delta,\epsilon,S,T)$ where $(H,\cdot,1,\Delta,\epsilon,S)$ and $(H,\bullet,1,\Delta,\epsilon,T)$ are (cocommutative) Hopf algebras.
       We denote by $\HDiGrp$ the category of Hopf digroups.
    \end{Def}

    Recall that a Hopf brace is a Hopf digroup $(H,\cdot,\bullet,1,\Delta,\epsilon,S,T)$ satisfying the compatibility condition:  
    $$a\bullet(b\cdot c)=(a_1\bullet b)\cdot S(a_2)\cdot (a_3\bullet c).$$
    
    \begin{Def}\label{Def HRadRng}
       A \textit{(cocommutative) Hopf radical ring} is a Hopf brace $(H,\cdot,\bullet,1,\Delta,\epsilon,S,T)$ satisfying the linearized equations:
       \begin{align*}
           (a\cdot b)\bullet c=& (a\bullet c_1)\cdot S(c_2)\cdot (b\bullet c_3)\\
           a\cdot b=& b\cdot a.
       \end{align*}
       We denote by $\HRadRng$ the category of Hopf radical rings.
    \end{Def}

   With these definitions, we obtain the linearized analogue of (\ref{chain of inclusions}), namely the following chain of functors between categories of coalgebraic models:
   \begin{equation}\label{Chain of Hopf inclusions}
       \HRadRng\xrightarrow{\hspace{0.5cm}}\HBr\xrightarrow{\hspace{0.5cm}}\HDiGrp
   \end{equation}

   Part of the aim of this paper is to study these inclusions. We will show that the first two functors are inclusions of Birkhoff subcategories ---as in (\ref{chain of inclusions})--- and that all the categories involved are semi-abelian. 

   We summarize all the above-mentioned examples of categories of coalgebraic models of algebraic theories in the following table.

   \begin{table}[H]
   \caption{Examples}
   \centering
   \begingroup
   \setlength{\tabcolsep}{10pt}
   \renewcommand{\arraystretch}{1.3} 
   \label{Table of examples}
   \begin{tabular}{|c|c|c|}
   \hline
       \textbf{Lawvere theory} & \textbf{Models in $\Set$} & \textbf{Models in $\Coalg$}\\
       \hline
       $\Th_\mathsf{Mon}$ & $\mathsf{Mon}$ & $\Bialg$\\
       $\Th_\mathsf{Loops}$ & $\mathsf{Loops}$ & $\mathsf{HBialg_{coc}}$\\
       $\Th_\mathsf{MLoops}$ & $\mathsf{MLoops}$ & $\mathsf{MHBialg_{coc}}$\\
       $\Th_\Grp$ & $\Grp$ & $\Hopfc$\\
       $\Th_\Ab$ & $\Ab$ & $\Hopfcc$\\
       $\Th_{\SKB}$ & $\SKB$ & $\HBr$ \\
       $\Th_\DiGrp$ & $\DiGrp$ & $\HDiGrp$\\
       $\Th_\RadRng$ & $\RadRng$ & $\HRadRng$\\
       \hline
   \end{tabular}
   \endgroup
   \end{table}

   Let us notice that models in $\Set$ in the last five rows of the table have in common an underlying group structure. They are, indeed, examples of varieties of $\Omega$-groups. These algebraic varieties were introduced by Higgins in \cite{Higgins} and have been widely investigated, since they provide a unified framework for the study of groups, rings, Lie algebras or Jordan algebras.

   We recall that a $\Omega$-group is a group with a set of operations with positive arity defined on it. If we denote by $u$ the neutral element of the group, every operation $f$ of arity $n$ has to satisfy the following condition:
   $$f(u,\dots,u)=u.$$
   This requirement is equivalent to the fact that the neutral element of the group forms a subalgebra with respect to the operations. 

   Every algebraic variety $\mathcal{V}$ of $\Omega$-groups has a corresponding Lawvere theory, say $\Th$, thus it makes sense to consider its linearized version, namely the category of coalgebraic models of $\Th$. For the sake of brevity, we call throughout the paper ``$\Omega$-Hopf algebra'' a model in $\Coalg$ of a Lawvere theory of an algebraic variety of $\Omega$-groups.

\begin{Def}\label{def omega hopf algebras}
    An \textit{$\Omega$-Hopf algebra} is a product-preserving functor from $\Th$ to $\Coalg$, where $\Th$ is a Lawvere theory corresponding to a variety of $\Omega$-groups. 
    We call \textit{category of $\Omega$-Hopf algebras}, a category of coalgebraic models for a Lawvere theory of $\Omega$-groups.
\end{Def}

All coalgebraic models in the last six rows of Table \ref{Table of examples} are, according to our definition, categories of $\Omega$-Hopf algebras. The purpose of Section \ref{omega groups} of this paper is to start an analysis of this class of categories.
   
\end{subsection}

\end{section}

\begin{section}{From models to coalgebraic models}\label{section classical vs coalgebraic}

Let us fix a Lawvere theory $\Th$ and analyse the relation between the category of models $[\Th,\Set]$ and that of coalgebraic models $[\Th,\Coalg]$.
Given a coalgebra $(C,\Delta,\epsilon)$, we say that $c\in C$ is a \textit{grouplike element} if $\Delta(c)=c\otimes c$ and $\epsilon(x)=1$.
One can define two functors:

\begin{minipage}{0.3\textwidth}
        \begin{align*}
            F\colon \Set& \longrightarrow \Coalg\\
            X& \mapsto \K[X]
        \end{align*}
    \end{minipage}
    \begin{minipage}{0.7\textwidth}
        \begin{align*}
            G\colon \Coalg& \longrightarrow \Set\\
            C& \mapsto G(C)\coloneqq\{\text{grouplike elements of } C\}
        \end{align*}
    \end{minipage}

\vspace{0.3cm}
\noindent where $\K[X]$ is the linear space with basis $X$ and with comultiplication and counit the linear maps extending the assignments $\Delta(x)=x\otimes x$ and $\epsilon(x)=1$ for any $x\in X$, respectively. 
It is easy to show that $F$ is left adjoint to $G$: indeed, there is a natural bijection:
$$\Hom_{\Coalg}(\K[X],C)\cong\Hom_{\Set}(X,G(C)),$$
since a map of coalgebras from $\K[X]$ to $C$ is uniquely determined by the images of elements in $X$ and the image of a grouplike element through a homomorphism of coalgebras is a grouplike element.

This adjunction will now be shown to lift to an adjunction between the categories of functors: 
\begin{figure}[H]
    \centering
    \begin{tikzpicture}
        \node(0) at (-2,0) {$[\Th, \Set]$};
        \node(1) at (2,0) {$[\Th,\Coalg].$};
        \node(2) at (0,0) {$\perp$};
        
        \draw[->] (0) to[out=10,in=170] node[above]{\scriptsize{$F\circ \_$}} (1);
        \draw[->] (1) to[out=190,in=350] node[below]{\scriptsize{$G\circ\_$}} (0);
    \end{tikzpicture}
\end{figure}
Indeed, the functor $G$ preserves finite products since it is a right adjoint, and the functor $F$ also does so: given two sets $X$ and $Y$, the coalgebra $\K[X]\otimes\K[Y]$ is the vector space with basis $X\times Y$ and comultiplication given by the linear map extending the assignments $\Delta(x\otimes y)=x\otimes y \otimes x\otimes y$, hence it is isomorphic to $\K[X\times Y]$. 

\begin{Ex}
If we specialize the above adjunction to the Lawvere theory $\Th_\Grp$ of groups, we reconstruct ---as a special case of the previous construction--- the well-known adjunction between groups and cocommutative Hopf algebras: 
\begin{figure}[H]
    \centering
    \begin{tikzpicture}
        \node(0) at (-2,0) {$\Grp$};
        \node(1) at (2,0) {$\Hopfc,$};
        \node(2) at (0,0) {$\perp$};
        
        \draw[->] (0) to[out=10,in=170] node[above]{\scriptsize{$\K[-]$}} (1);
        \draw[->] (1) to[out=190,in=350] node[below]{\scriptsize{$\mathcal{G}$}} (0);
    \end{tikzpicture}
\end{figure}
\noindent where the functor $\K[-]$ sends a group $G$ to $\K[G]$ equipped with the Hopf algebra structure with multiplication, unit and antipode induced by the group laws. On the other hand, $\mathcal{G}$ associates to any Hopf algebra the group consisting of its grouplike elements.

\end{Ex}

Let us take a Lawvere theory $\Th$ and the functor $F\circ \_$ defined as above.

\begin{Lem}\label{F continuous}
    The functor $F\circ\_$ preserves finite limits.
\end{Lem}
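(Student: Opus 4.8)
The plan is to reduce the statement to a concrete check about the functor $F\colon \Set\to\Coalg$, since limits in the functor categories $[\Th,\Set]$ and $[\Th,\Coalg]$ are computed pointwise (they are limits of product-preserving functors, and the inclusions into all functors create the relevant limits). Thus $F\circ\_$ preserves finite limits if and only if $F$ itself preserves finite limits. Because $F$ already preserves finite products (this was verified just above, using $\K[X]\otimes\K[Y]\cong\K[X\times Y]$), by the standard fact that a finitely-complete category has all finite limits as soon as it has finite products and equalizers, it suffices to show that $F$ preserves equalizers. So the crux is: given a pair of functions $f,g\colon X\rightrightarrows Y$ with equalizer $E=\{x\in X: f(x)=g(x)\}\hookrightarrow X$, the coalgebra map $\K[E]\to\K[X]$ is the equalizer in $\Coalg$ of $\K[f],\K[g]\colon \K[X]\rightrightarrows\K[Y]$.

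The key step is to identify the equalizer of a parallel pair in $\Coalg$. Here I would invoke the description of $\Coalg$ from \cite{GrunenfelderPare}: equalizers in $\Coalg$ are \emph{not} the underlying-vector-space equalizers in general, but rather the largest subcoalgebra of the vector-space equalizer. Concretely, if $h,k\colon C\rightrightarrows D$ are coalgebra maps, their equalizer is the sum of all subcoalgebras of $C$ contained in the linear subspace $\{c\in C: h(c)=k(c)\}$. For $C=\K[X]$, every subcoalgebra is spanned by a subset of grouplike elements (a basic fact about grouplike-spanned coalgebras: $\K[X]$ is cocommutative and pointed with grouplikes exactly $X$, and its subcoalgebras are precisely $\K[S]$ for $S\subseteq X$). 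Now for a grouplike $x\in X$ we have $\K[f](x)=f(x)$ and $\K[g](x)=g(x)$, both grouplike in $\K[Y]$, so $\K[f](x)=\K[g](x)$ iff $f(x)=g(x)$ iff $x\in E$. Hence the largest subcoalgebra of $\K[X]$ on which $\K[f]$ and $\K[g]$ agree is exactly $\K[E]$, which is the claimed equalizer. Combining with preservation of finite products gives preservation of all finite limits by $F$, hence by $F\circ\_$ pointwise.

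The main obstacle I anticipate is the subtlety that equalizers in $\Coalg$ differ from vector-space equalizers, so one cannot argue naively that "the underlying linear map of $\K[E]\to\K[X]$ is an equalizer of vector spaces, therefore it is an equalizer of coalgebras" — that implication fails because the forgetful functor $\Coalg\to\mathsf{Vec}$ is not a right adjoint and does not preserve limits. The care needed is precisely in pinning down the correct equalizer in $\Coalg$ and in using the structural fact that subcoalgebras of $\K[X]$ correspond to subsets of $X$; once that is in hand the verification is immediate. An alternative route, avoiding this point, is purely formal: $F\circ\_$ has a right adjoint $G\circ\_$ (shown above) so it preserves colimits — but that is the wrong direction. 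However, one can instead note that $F$ is, by \cite{GrunenfelderPare}, the left adjoint whose right adjoint $G$ is \emph{comonadic}-adjacent, and that $F$ is in fact known to be fully faithful and to preserve finite limits because $\Coalg$ is a locally presentable category in which $F$ factors the discrete-coalgebra functor; I would fall back on the explicit equalizer computation as the cleanest self-contained argument.
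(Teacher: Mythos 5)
Your proposal is correct, and it follows the same overall decomposition as the paper (reduce to pointwise limits, then check that $F$ preserves finite products and equalizers), but the two key steps are handled by genuinely different means. For the reduction, the paper does not invoke the abstract fact that pointwise limits of product-preserving functors are again product-preserving; instead it verifies by hand that the pointwise equalizer $\mathrm{Eq}(\alpha_T,\beta_T)$ of two natural transformations is closed under every operation $Cf$, so that it defines a subfunctor which is the equalizer in $[\Th,\Coalg]$ --- a fact it reuses later (e.g.\ in the protomodularity proof, where equalizers in $[\Th,\Coalg]$ are taken to be computed as in $\Coalg$). Your ``limits commute with finite products, hence the inclusion creates them'' argument is valid but is asserted rather than proved; it would deserve a sentence of justification. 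For equalizer preservation by $F$ itself, the paper uses the explicit formula $\mathrm{Eq}(Ff,Fg)=\{c\mid c_1\otimes Ff(c_2)=c_1\otimes Fg(c_2)\}$ (citing Agore) and checks directly on basis elements that this subspace equals $\K[E]$, whereas you use the description of the equalizer as the largest subcoalgebra contained in the linear equalizer together with the structural fact that every subcoalgebra of $\K[X]$ is of the form $\K[S]$ for $S\subseteq X$. Both descriptions of equalizers in $\Coalg$ are correct over a field, and your classification of subcoalgebras of a grouplike coalgebra is a true standard fact (it follows from the argument you sketch, or from cosemisimplicity), so your route is sound; it buys a shorter, more conceptual verification at the cost of importing two structural facts about $\Coalg$, while the paper's computation is more elementary and self-contained and simultaneously records the concrete form of equalizers that is needed again later in the article. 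Your closing ``alternative route'' paragraph is indeed not a proof (as you acknowledge), but since you fall back on the explicit equalizer argument this does not affect correctness.
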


\begin{proof}
    The functor $F$ preserves binary products since $\K^{I\times J}\cong \K^I\otimes \K^J$ for every pair of sets $(I,J)$. Thus, $F\circ \_$ preserves them too: given two functors $X,Y\in[\Th,\Coalg]$, we get
    \begin{align*}
        (F\circ X)\times(F\circ Y)(T)\cong FX(T)\otimes FY(T)\cong \K^{X(T)}\otimes\K^{Y(T)}\cong \K^{X(T)\times Y(T)}\cong F\circ(X\times Y) (T).
    \end{align*}

    Let us show that the functor $F$ also preserves equalizers. We take two parallel arrows $f,g\colon I\rightarrow J$ in $\Set$ and their equalizer $E=\{i\in I\mid f(i)=g(i)\}$. By applying the functor $F$ one gets the diagram
    \begin{figure}[H]
        \centering
        \begin{tikzpicture}
            \node(E) at (-2,0) {$\K^E$};
            \node(I) at (0,0) {$\K^I$};
            \node(J) at (2,0) {$\K^J.$};

            \draw[->] (0.3,0.1) to node[above]{\scriptsize{$Ff$}} (1.7,0.1);
            \draw[->] (0.3,-0.1) to node[below]{\scriptsize{$Fg$}} (1.7,-0.1);
            \draw[->] (E) to node[]{} (I);
        \end{tikzpicture}
    \end{figure}
    \noindent In $\Coalg$ the equalizer of $F(f)$ and $F(g)$ can be computed as follows (see \cite{AgoreLim}, for instance):
    \begin{align*}
        \mathrm{Eq}(Ff,Fg)=&\{c\in\K^I\mid c_1\otimes Ff c_2=c_1\otimes Fg c_2\}\\
        =&\Big\{\sum_{x\in S\subseteq I}\alpha_x x\mid \sum_{x\in S\subseteq I}\alpha_xx\otimes f(x)= \sum_{x\in S\subseteq I}\alpha_xx\otimes g(x)\Big\},
    \end{align*}

    \noindent where $S$ is the finite subset of $I$ of those $x$ that appear with a non trivial coefficient. We want to show that $\K^E=\mathrm{Eq}(Ff,Fg)$. If we take an element $c=\sum_{x\in S'\subseteq E}\alpha_x x\in \K^E$, then $f(x)=g(x)$ for every $x\in S'$ (i.e. for any $x$ that appears with a non trivial coefficient in $c$) and so $c\in \mathrm{Eq}(Ff,Fg)$. Conversely, let $c=\sum_{x\in S\subseteq I}\alpha_x x\in \mathrm{Eq}(Ff,Fg)$. Then $\sum_{x\in S\subseteq I}\alpha_xx\otimes f(x)= \sum_{x\in S\subseteq I}\alpha_xx\otimes g(x)$ implies that 
    $$\sum_{x\in S\subseteq I}\alpha_x x\otimes (f(x)-g(x))=0$$
    and so $f(x)=g(x)$ for every $x\in S$. This means that $S\subseteq E$ and $c\in \K^E$.

    In order to prove that $F\circ\_$ preserves equalizers, it is sufficient to check that, given two parallel arrows $\alpha,\beta\colon C\Rightarrow D$ in $[\Th,\Coalg]$, their equalizer is just
    \begin{align*}
        E\colon \Th &\longrightarrow \Coalg\\
        T &\longmapsto \mathrm{Eq}(\alpha_T,\beta_T).
    \end{align*}
    If $\mathrm{Eq}(\alpha_T,\beta_T)$ is closed under every operation $f\colon T^n\rightarrow T$, the subfunctor $E$ of $C$ is well-defined and is the equalizer. Thus, it suffices to show that $Cf(\mathrm{Eq}(\alpha_T,\beta_T)^{\otimes n})\subseteq \mathrm{Eq}(\alpha_T,\beta_T)$. Let us take some elements $c^1,\dots,c^n\in \mathrm{Eq}(\alpha_T,\beta_T)$, namely $c_1^i\otimes \alpha_T(c_2^i)=c_1^i\otimes \beta_T(c_2^i)$ for every $i=1,\dots,n$. The following equalities hold:
    \begin{align*}
        Cf(c&^1\otimes \dots\otimes c^n)_1\otimes \alpha_T(Cf(c^1\otimes \dots\otimes c^n)_2)=\\
        =& Cf(c^1_1\otimes\dots\otimes c^n_1)\otimes\alpha_T(Cf(c^1_2\otimes \dots\otimes c^n_2)\\
        =& Cf(c^1_1\otimes\dots\otimes c^n_1)\otimes Cf((\alpha_T(c^1_2)\otimes \dots\otimes \alpha_T(c^n_2))\\
        =& C(f)\otimes C(f)(c^1_1\otimes c^n_1\otimes \dots\otimes \alpha_T(c^1_2)\otimes \alpha_T(c^n_2))\\
        =& C(f\times f)(c^1_1\otimes \alpha_T(c^1_2)\otimes \dots\otimes c^n_1\otimes \alpha_T(c^n_2))\\
        =& C(f\times f)(c^1_1\otimes \beta_T(c^1_2)\otimes \dots\otimes c^n_1\otimes \beta_T(c^n_2))\\
        =& Cf(c^1\otimes \dots\otimes c^n)_1\otimes \beta_T(Cf(c^1\otimes \dots\otimes c^n)_2).
        \end{align*}
        This argument concludes the proof that $F\circ\_$ preserves finite limits.
    \end{proof}

\begin{Cor}\label{F localization}
    The functor $F\circ\_$ preserves finite limits and colimits and realizes $[\Th,\Set]$ as a full subcategory of $[\Th,\Coalg]$.
\end{Cor}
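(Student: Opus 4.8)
The plan is to deduce the statement from three facts: (i) preservation of finite limits, which is exactly Lemma \ref{F continuous}, so nothing new is needed there; (ii) preservation of colimits, which will follow from the fact that $F\circ\_$ is a left adjoint; and (iii) full faithfulness of $F\circ\_$, this being precisely the meaning of ``realizes $[\Th,\Set]$ as a full subcategory of $[\Th,\Coalg]$''.

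For (ii), I would first record that the pointwise adjunction $F\dashv G$ recalled above lifts to an adjunction $F\circ\_ \dashv G\circ\_$ between the functor categories. Both functors are well-defined on product-preserving functors, since $F$ and $G$ preserve finite products (for $G$ this is automatic as a right adjoint; for $F$ it was checked just before the lemma), and the required natural bijection $\Hom_{[\Th,\Coalg]}(F\circ X, C)\cong \Hom_{[\Th,\Set]}(X, G\circ C)$ is obtained by assembling the pointwise bijections $\Hom_{\Coalg}(\K[X(T)], C(T))\cong\Hom_{\Set}(X(T), G(C(T)))$ and checking naturality in $T$, which is immediate from the naturality of the unit of $F\dashv G$. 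Being a left adjoint, $F\circ\_$ preserves every colimit that exists in $[\Th,\Set]$; since the latter is a variety, hence cocomplete, $F\circ\_$ preserves all colimits, in particular finite ones. Note that no a priori information about colimits in $[\Th,\Coalg]$ is needed: the colimit of $F\circ(\_)$ applied to a diagram is produced by transporting the universal property.

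For (iii), recall that a left adjoint is fully faithful exactly when the unit of the adjunction is a natural isomorphism. The unit of $F\circ\_ \dashv G\circ\_$ at $X\in[\Th,\Set]$ is the natural transformation $X\Rightarrow G\circ F\circ X$ whose $T$-component is the unit $\eta_{X(T)}\colon X(T)\to G(\K[X(T)])$ of $F\dashv G$; hence it suffices to check that $\eta_A\colon A\to G(\K[A])$ is a bijection for every set $A$, equivalently that the grouplike elements of $\K[A]$ are exactly the basis elements. This is a standard computation: writing a grouplike $c=\sum_{a\in S}\alpha_a a$ with $S\subseteq A$ finite, the equality $\Delta(c)=c\otimes c$ read off in the basis $\{a\otimes b\mid a,b\in A\}$ of $\K[A]\otimes\K[A]$ forces $\alpha_a^2=\alpha_a$ for all $a$ and $\alpha_a\alpha_b=0$ for $a\neq b$, so at most one coefficient is nonzero and it equals $1$; together with $\epsilon(c)=\sum_{a}\alpha_a=1$ this gives $c=a$ for a unique $a\in A$. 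Thus $\eta_A$ is bijective, the unit is a natural isomorphism, and $F\circ\_$ is fully faithful, identifying $[\Th,\Set]$ with the full subcategory of $[\Th,\Coalg]$ spanned by the $\Th$-coalgebras of the form $F\circ X$.

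I do not expect a genuine obstacle: the argument is routine once Lemma \ref{F continuous} is in hand. The only points that deserve a line of care are the bookkeeping that the pointwise data genuinely assemble into an adjunction of functor categories (so that $F\circ\_$ is a bona fide left adjoint) and the elementary verification that $\K[A]$ has no grouplike elements beyond the basis $A$, which is what makes the unit invertible.
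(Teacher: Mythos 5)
Your proposal is correct and follows essentially the same route as the paper: finite limits from Lemma \ref{F continuous}, colimits from $F\circ\_$ being a left adjoint of the lifted adjunction, and full faithfulness from the fact that the grouplike elements of $\K[A]$ are precisely the basis elements $A$. Your packaging of the last step via the unit-isomorphism criterion is just a slightly more explicit version of the paper's direct argument that a coalgebra map $\K[X]\to\K[Y]$ restricts to a function $X\to Y$.
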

\begin{proof}
    The functor preserves finite limits due to Lemma \ref{F continuous} and colimits since it is a left adjoint. Moreover, it is fully faithful: any coalgebra map $f\colon\K[X]\ra\K[Y]$ is uniquely determined by the image of $X$ and restricts to a function $f\colon X\ra Y$, since the image of a grouplike element through a coalgebra homomorphism is a grouplike element. 
\end{proof}

\begin{subsection}{Protomodularity}

Let us assume that the categories $[\Th, \Set]$ and $[\Th,\Coalg]$ are pointed. 

In order to fix the notation, in this subsection we will use bold characters to denote objects and morphisms in the category $[\Th,\Coalg]$ and their unbolded version to denote the underlying coalgebras and coalgebra morphisms. In other words, the coalgebraic model $\mathbf{H}$ in $[\Th,\Coalg]$ sends $T$ to $H$ and, given a natural transformation $\mathbf{f}\colon \mathbf{H}\to \mathbf{H'}$, $f\colon H\to H'$ is its evaluation in $T$. Since $[\Th,\Coalg]$ is pointed, there is a unique map from the trivial $\Th$-coalgebra to $\mathbf{H}$: we denote by $u_H\colon \K\to H$ the first component of this natural transformation and by $0_{HH'}\colon H\to H'$ the composition $u_{H'}\circ \epsilon_H$.

The first interesting property that we want to study is the protomodularity of the category $[\Th,\Coalg]$; in our context, it means understanding whether the split short five lemma holds for $\Th$-coalgebras. In the classical case, the protomodularity is a ``Mal'tsev condition'', namely it depends on the existence of some terms satisfying certain equations. In an analogous way, we prove that a category of coalgebraic models of an algebraic theory is protomodular in the sense of Bourn \cite{Bourn1991Normalization} if and only if there are linearized terms satisfying the linearization of the classical equations characterizing protomodular algebraic varieties. This result allows us to deduce that a category of coalgebraic models $[\Th,\Coalg]$ is protomodular if and only if $[\Th,\Set]$ is protomodular as well.

\begin{Prop}\label{Prop protomod}
    If $[\Th,\Coalg]$ is protomodular, then $[\Th,\Set]$ is also protomodular.
\end{Prop}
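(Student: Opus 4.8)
The plan is to transfer protomodularity along the fully faithful, finite-limit-preserving functor $F\circ\_\colon[\Th,\Set]\to[\Th,\Coalg]$ from Corollary \ref{F localization}. Concretely, protomodularity in the sense of Bourn amounts to the validity of the Split Short Five Lemma: given a commutative diagram of split epimorphisms with their kernels in which the outer vertical maps are isomorphisms, the middle one is an isomorphism too. So I would start with such a diagram in $[\Th,\Set]$, consisting of split short exact sequences and a morphism between them inducing isomorphisms on kernels and on quotients, and I want to conclude that the middle map is an isomorphism.

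The key observation is that $F\circ\_$ preserves everything in sight that is needed to recognise the hypotheses of the Split Short Five Lemma: it preserves finite limits (Lemma \ref{F continuous}), hence kernels of split epimorphisms and the property of being a split epimorphism (it takes a splitting to a splitting); it is pointed-preserving since $F(\emptyset)$ — or rather $F$ applied to the one-element set, depending on which is the zero object — goes to the zero object, and more to the point a left adjoint between pointed categories preserves the zero object; and it reflects isomorphisms because it is fully faithful (a fully faithful functor reflects isomorphisms). So the steps are: (1) take a diagram in $[\Th,\Set]$ witnessing a potential failure of the Split Short Five Lemma, i.e. two split extensions and a morphism between them that is an isomorphism on kernels and on quotients; (2) apply $F\circ\_$ to obtain a diagram in $[\Th,\Coalg]$ which, by the preservation properties above, is again a pair of split extensions with a morphism between them that is an isomorphism on kernels and on quotients; (3) invoke protomodularity of $[\Th,\Coalg]$ to deduce that the image of the middle map is an isomorphism in $[\Th,\Coalg]$; (4) use full faithfulness of $F\circ\_$ to reflect this back to the conclusion that the middle map was an isomorphism in $[\Th,\Set]$.

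The one point requiring a little care — and the place I expect the only real friction — is making sure that $F\circ\_$ genuinely sends a split short exact sequence to a split short exact sequence, i.e. that it preserves not just kernels but the relevant cokernels/quotients as well, or at least the exactness data the Split Short Five Lemma refers to. Since $F\circ\_$ is also a left adjoint (Corollary \ref{F localization}), it preserves all colimits, in particular cokernels, so a short exact sequence $0\to K\to A\to B\to 0$ in $[\Th,\Set]$ with $A\to B$ split goes to a sequence in $[\Th,\Coalg]$ in which the image of $K$ is the kernel of the image of $A\to B$ (limit preservation), the image of $A\to B$ is still a split epi, and — because $A\to B$ being a regular/normal epi with kernel $K$ is a colimit statement — the image is again normal epi with the right kernel. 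Thus the image diagram really is of the form to which protomodularity of $[\Th,\Coalg]$ applies. Assembling these observations, together with the fact that isomorphisms are reflected by fully faithful functors, completes the argument; the proof is essentially a formal diagram-chase once the preservation lemmas are in hand.
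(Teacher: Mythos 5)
Your argument is correct and is essentially the paper's own proof: the paper simply observes that $F\circ\_$ preserves pullbacks and reflects isomorphisms (Corollary \ref{F localization}), which is exactly the diagram-chase you spell out via the Split Short Five Lemma. Note only that your worry about cokernels is superfluous — the split short exact sequences in the protomodularity condition involve just kernels of split epimorphisms and their codomains, so preservation of finite limits (plus the splittings and the zero object) already gives everything needed.
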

\begin{proof}
The thesis immediately follows from the fact that the functor $F\circ \_$ preserves pullbacks and reflects isomorphisms by Corollary \ref{F localization}.
\end{proof}

\begin{Cor}\label{linearized protomod}
    If $[\Th,\Coalg]$ is protomodular, then there exist a positive integer $n$ and some linearized terms that can be described as follows: for any $\mathbf{H}$ in $[\Th,\Coalg]$, there are morphisms of coalgebras $\alpha_H^i\colon H\otimes H\rightarrow H$ for $i=1,\dots, n$ and $\beta_H\colon H^{\otimes n+1}\rightarrow H$ such that
    \begin{equation}\label{alphai}
        \alpha^i_H\circ \Delta_H=0_{HH} \quad \text{for any }\;\;i=1,\dots,n 
    \end{equation}
    \begin{equation}\label{beta}
        \beta_H\circ(\alpha_H^1\otimes \dots\otimes \alpha^n_H\otimes (\epsilon_H\otimes \mathrm{id_H}))\circ \Delta_{H\otimes H}^{(n)}=\mathrm{id}_H\otimes \epsilon_H.
    \end{equation}
    
\end{Cor}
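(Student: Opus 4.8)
The plan is to reduce the statement to the classical Mal'tsev-type characterization of protomodular varieties and then to transport the resulting terms through the linearizing process. First, Proposition~\ref{Prop protomod} shows that if $[\Th,\Coalg]$ is protomodular, then the pointed variety $[\Th,\Set]$ is protomodular as well. I would then apply the characterization of pointed protomodular varieties of universal algebra \cite{borceuxbourn}: it yields a positive integer $n$, binary terms $\alpha^1,\dots,\alpha^n$ and an $(n+1)$-ary term $\beta$ of the theory $\Th$ for which the identities
\[
\alpha^i(x,x)\approx 0\quad(i=1,\dots,n),\qquad \beta\bigl(\alpha^1(x,y),\dots,\alpha^n(x,y),y\bigr)\approx x
\]
are consequences of the axioms defining $\Th$, where $0$ is the constant term selected by the zero object.

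Next, for a fixed $\mathbf H$ in $[\Th,\Coalg]$ I would put $\alpha^i_H\coloneqq\mathbf H(\alpha^i)\colon H\otimes H\to H$ and $\beta_H\coloneqq\mathbf H(\beta)\colon H^{\otimes n+1}\to H$; these are exactly the linearized terms attached to the chosen classical terms, i.e.\ the images of $\alpha^i$ and $\beta$ under $l_2$ and $l_{n+1}$, hence coalgebra homomorphisms of the required arities. To finish, I would read the two displayed identities as equalities of morphisms of $\Th$ and apply the product-preserving functor $\mathbf H$, recording how $\mathbf H$ acts on the ``structural'' morphisms involved. Since in $\Coalg$ the categorical product is the tensor product — with diagonal the comultiplication $\Delta$ and projections $\epsilon\otimes\id$ and $\id\otimes\epsilon$ — the functor $\mathbf H$ sends the diagonal $T\to T^2$ to $\Delta_H$, the first and second projections $T^2\to T$ to $\id_H\otimes\epsilon_H$ and $\epsilon_H\otimes\id_H$, the constant morphism $T\to T$ associated with $0$ to $0_{HH}=u_H\circ\epsilon_H$, and the tupling $\langle\alpha^1,\dots,\alpha^n,\pi_2\rangle\colon T^2\to T^{n+1}$ to $(\alpha^1_H\otimes\dots\otimes\alpha^n_H\otimes(\epsilon_H\otimes\id_H))\circ\Delta_{H\otimes H}^{(n)}$, using that a product-preserving functor carries the canonical map into a $(k+1)$-fold product to the tensor product of the component maps precomposed with the $k$-iterated comultiplication $\Delta^{(k)}$. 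Applying $\mathbf H$ to $\alpha^i\circ\delta=0$ and to $\beta\circ\langle\alpha^1,\dots,\alpha^n,\pi_2\rangle=\pi_1$ then produces precisely \eqref{alphai} and \eqref{beta}; equivalently, one may combine the map $l_n$ with Theorem~\ref{Izq} applied to the $\F$-coalgebra underlying $\mathbf H$. Naturality in $\mathbf H$ is automatic, since $\alpha^i_H$ and $\beta_H$ are evaluations of $\mathbf H$ at fixed morphisms of $\Th$.

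The main obstacle I anticipate is the bookkeeping in the last step: one has to verify carefully that a product-preserving functor sends the universal tupling $\langle\alpha^1,\dots,\alpha^n,\pi_2\rangle$ to an iterated comultiplication composed with the tensor product of the components, and to match the iteration index with the notation $\Delta_{H\otimes H}^{(n)}$ of the statement. This is a routine diagram chase, in the same spirit as — and no harder than — the verification that $F\circ\_$ preserves equalizers carried out in the proof of Lemma~\ref{F continuous}. Everything else is formal: the reduction to $[\Th,\Set]$ is Proposition~\ref{Prop protomod}, the existence of the classical terms is the cited Mal'tsev-type characterization, and the fact that linearized terms are coalgebra homomorphisms is guaranteed by Theorem~\ref{Izq}.
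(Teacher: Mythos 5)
Your proposal is correct and follows essentially the same route as the paper: reduce to $[\Th,\Set]$ via Proposition~\ref{Prop protomod}, invoke the classical Mal'tsev-type characterization of pointed protomodular varieties to obtain the terms $\alpha^1,\dots,\alpha^n,\beta$, and then linearize them. The paper's proof simply states ``by linearizing these terms'' where you spell out the translation (diagonal $\mapsto\Delta_H$, projections $\mapsto\id_H\otimes\epsilon_H$, $\epsilon_H\otimes\id_H$, tupling $\mapsto$ tensoring after $\Delta^{(n)}_{H\otimes H}$), which is exactly the implicit content of the linearizing process of Section~\ref{section linearization}.
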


\begin{proof}
    If $[\Th,\Coalg]$ is protomodular, then so is $[\Th,\Set]$ and hence there exists a positive integer $n$, some binary terms $\alpha^i$ for $i=1,\dots, n$ and an $n+1$-term $\beta$ satisfying the equations:
    \begin{equation*}
        \alpha^i(x,x)=0 \quad \text{for any }\;\;i=1,\dots,n 
    \end{equation*}
    \begin{equation*}
        \beta(\alpha^1(x,y),\dots,\alpha^n(x,y)y))=x.
    \end{equation*}
    By linearizing these terms, we get the required linearized terms.
\end{proof}

Now we want to show that the converse implication also holds. The proof is similar to the one concerning algebraic varieties (see \cite{BournJanelidze, GranRosicky}).

\begin{Prop}\label{Protomodularity mal'tsev condition}
    Let $[\Th,\Coalg]$ have some binary terms $\alpha^i$ for $i=1,\dots, n$ and an $(n+1)-$ary term $\beta$ that satisfy the linearized equations (\ref{alphai}) and (\ref{beta}) for every $\mathbf{H}$ in $[\Th,\Coalg]$. Then $[\Th, \Coalg]$ is protomodular.
\end{Prop}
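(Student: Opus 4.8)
The plan is to verify the Split Short Five Lemma, which for the pointed, finitely complete category $[\Th,\Coalg]$ is equivalent to Bourn protomodularity. So I would start with a commutative diagram whose rows are split short exact sequences $K\xrightarrow{k}B\xrightarrow{f}A$ and $K'\xrightarrow{k'}B'\xrightarrow{f'}A'$ in $[\Th,\Coalg]$, with sections $s,s'$ (so $fs=\id$, $f's'=\id$), vertical morphisms $w\colon K\to K'$, $v\colon B\to B'$, $u\colon A\to A'$ satisfying $vk=k'w$, $f'v=uf$, $vs=s'u$, with $w$ the comparison induced on kernels, and $u,w$ isomorphisms; the goal is that $v$ is an isomorphism. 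A preliminary reduction: finite limits in $[\Th,\Coalg]$ are computed as in $\Coalg$ (binary products are tensor products, and equalizers — hence kernels — are as in the proof of Lemma \ref{F continuous}), and the evaluation-at-$T$ functor $[\Th,\Coalg]\to\Coalg$ reflects isomorphisms, since the inverse in $\Coalg$ of a coalgebra isomorphism that is a $\Th$-coalgebra morphism still commutes with every operation and is thus a $\Th$-coalgebra morphism. Hence it suffices to exhibit a two-sided inverse of the underlying coalgebra map $v\colon B\to B'$ in $\Coalg$.

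Imitating the classical proof for protomodular varieties (cf. \cite{BournJanelidze, GranRosicky}), with Sweedler calculus of coalgebra morphisms replacing element manipulations, I would first form the ``twisted diagonal'' $\delta\coloneqq(\id_{B'}\otimes s'f')\circ\Delta_{B'}\colon B'\to B'\otimes B'$ — a composite of coalgebra maps, since $\Delta_{B'}$ is one by cocommutativity — and set $\gamma^i\coloneqq\alpha^i_{B'}\circ\delta\colon B'\to B'$ for $i=1,\dots,n$. Using $f's'=\id$, that $f'$ is a coalgebra map, and the linearized equation (\ref{alphai}) for $A'$, one computes $f'\circ\gamma^i=u_{A'}\circ\epsilon_{B'}=0_{B'A'}$, so $\gamma^i$ factors through the kernel, $\gamma^i=k'\circ\bar\gamma^i$ for a unique coalgebra map $\bar\gamma^i\colon B'\to K'$. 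Putting $\kappa^i\coloneqq k\circ w^{-1}\circ\bar\gamma^i\colon B'\to B$, define the candidate inverse
\[
\theta\coloneqq\beta_B\circ\bigl(\kappa^1\otimes\cdots\otimes\kappa^n\otimes(s\circ u^{-1}\circ f')\bigr)\circ\Delta_{B'}^{(n)}\colon B'\to B,
\]
again a morphism of coalgebras.

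Then I would check $v\circ\theta=\id_{B'}$ and $\theta\circ v=\id_B$ by essentially the same computation. For the first, using that $v$ is a morphism of $\Th$-coalgebras (so, by naturality of the terms $\alpha^i,\beta$ of $\Th$, one has $v\alpha^i_B=\alpha^i_{B'}v^{\otimes 2}$ and $v\beta_B=\beta_{B'}v^{\otimes(n+1)}$), together with $vk=k'w$, $vs=s'u$ and the fact that $v$ commutes with comultiplication, one rewrites $v\circ\theta=\beta_{B'}\circ(\gamma^1\otimes\cdots\otimes\gamma^n\otimes s'f')\circ\Delta_{B'}^{(n)}$. Since $\delta$ is a coalgebra map, $\Delta_{B'\otimes B'}^{(n)}\circ\delta=\delta^{\otimes(n+1)}\circ\Delta_{B'}^{(n)}$; combining this with $\alpha^i_{B'}\circ\delta=\gamma^i$ and $(\epsilon_{B'}\otimes\id_{B'})\circ\delta=s'f'$ identifies $v\circ\theta$ with the left-hand side of the linearized equation (\ref{beta}) for $B'$ precomposed with $\delta$, hence with $(\id_{B'}\otimes\epsilon_{B'})\circ\delta$, which equals $\id_{B'}$ because $\epsilon_{B'}s'f'=\epsilon_{B'}$ and by counitality. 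For $\theta\circ v=\id_B$ I would run the mirror argument with the twisted diagonal $\delta_B\coloneqq(\id_B\otimes sf)\circ\Delta_B$ on $B$: setting $\gamma^i_B\coloneqq\alpha^i_B\circ\delta_B$, equation (\ref{alphai}) for $A$ gives $f\circ\gamma^i_B=0_{BA}$, so $\gamma^i_B=k\circ\bar\gamma^i_B$; from $vk=k'w$ and $f'v=uf$ one deduces $\bar\gamma^i\circ v=w\circ\bar\gamma^i_B$, hence $\kappa^i\circ v=\gamma^i_B$ and $s\circ u^{-1}\circ f'\circ v=sf$, so $\theta\circ v=\beta_B\circ(\gamma^1_B\otimes\cdots\otimes\gamma^n_B\otimes sf)\circ\Delta_B^{(n)}$, which is the left-hand side of (\ref{beta}) for $B$ precomposed with $\delta_B$, hence $=(\id_B\otimes\epsilon_B)\circ\delta_B=\id_B$. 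Thus $v$ is an isomorphism in $\Coalg$, hence in $[\Th,\Coalg]$, which proves the Split Short Five Lemma and therefore protomodularity.

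The main obstacle I anticipate is the coassociativity-and-counit bookkeeping that forces the linearized equations (\ref{alphai}) and (\ref{beta}) to slot into the computation — in particular the step identifying $v\circ\theta$ (resp. $\theta\circ v$) with the left-hand side of (\ref{beta}) precomposed with the twisted diagonal $\delta$ (resp. $\delta_B$) — together with the care needed to keep every map occurring in $\theta$ a genuine morphism of cocommutative coalgebras, so that the element-free Sweedler manipulations are legitimate, and to confirm that the kernels $k=\ker f$, $k'=\ker f'$ taken in $[\Th,\Coalg]$ really do have as underlying coalgebras the corresponding kernels in $\Coalg$.
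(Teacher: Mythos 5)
Your argument is correct, and the heart of it coincides with the paper's: reduce protomodularity to the Split Short Five Lemma, use that kernels and products in $[\Th,\Coalg]$ are computed in $\Coalg$ and that evaluation at $T$ reflects isomorphisms, factor the maps $\alpha^i_{B'}\circ(\id\otimes s'f')\circ\Delta_{B'}$ through the kernel $k'$, and assemble the candidate inverse with $\beta_B$, exactly as the paper builds its section $g'=\beta_B\circ(kf^{-1}\overline{\varphi^1}\otimes\cdots\otimes kf^{-1}\overline{\varphi^n}\otimes sh^{-1}p')\circ\Delta_{B'}^{(n)}$. Where you diverge is in how the two identities are obtained: you verify $v\theta=\id_{B'}$ and $\theta v=\id_B$ by two symmetric direct computations, both of which use the compatibility with the sections ($vs=s'u$, needed e.g.\ to turn $s'f'v$ into $vsf$ in the mirror computation). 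The paper never uses that square: it works with $\psi=gsh^{-1}p'$ (which agrees with $s'p'$ only when the section square commutes), proves only the one-sided identity $gg'=\id_{B'}$ by your type of computation, and separately establishes a cancellation property (its Step 1: $gx=gy=w$ with $w$ a $\Th$-morphism forces $x=y$) to upgrade the right inverse to a two-sided one. So your route is a bit shorter and more symmetric, at the price of assuming the (standard, and sufficient for Bourn protomodularity) section-compatible form of the lemma, whereas the paper's two-step argument handles the slightly more general diagram in which only the downward squares are required to commute. The computations you flag as the main obstacle do go through exactly as you describe: precomposing the linearized equation (\ref{beta}) with the twisted diagonal and using counitality yields both composites.
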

\begin{proof}
    Let us consider the following commutative diagram in $[\Th,\Coalg]$:
    \begin{figure}[H]
        \centering
        \begin{tikzpicture}
            \node(A) at (-2,0) {$\mathbf A$};
            \node(B) at (0,0) {$\mathbf B$};
            \node(C) at (2,0) {$\mathbf C$};
            \node(A') at (-2,-2) {$\mathbf{A'}$};
            \node(B') at (0,-2) {$\mathbf{B'}$};
            \node(C') at (2,-2) {$\mathbf{C'}$};

            \draw[->] (A) to node[above]{\scriptsize{$\mathbf{k}$}} (B);
            \draw[->] (B) to node[above]{\scriptsize{$\mathbf p$}} (C);
            \draw[->] (C) to[out=200,in=340] node[below] {\scriptsize{$\mathbf s$}} (B);
            \draw[->] (A') to node[above]{\scriptsize{$\mathbf{k'}$}} (B');
            \draw[->] (B') to node[above]{\scriptsize{$\mathbf{p'}$}} (C');
            \draw[->] (C') to[out=200,in=340] node[below] {\scriptsize{$\mathbf{s'}$}} (B');
            \draw[->] (A) to node[left]{\scriptsize{$\mathbf {f}$}} (A');
            \draw[->] (B) to node[left]{\scriptsize{$\mathbf{g}$}} (B');
            \draw[->] (C) to node[left]{\scriptsize{$\mathbf{h}$}} (C');
            
        \end{tikzpicture}
    \end{figure}
    \noindent where $\mathbf k$ and $\mathbf{k'}$ are the kernels of $\mathbf p$ and $\mathbf{p'}$ respectively, $\mathbf{ps}=\mathbf{id_C}$, $\mathbf{p's'}=\mathbf{id_{C'}}$ and the maps $\mathbf f$ and $\mathbf h$ are isos. We want to prove that $\mathbf g$ is an isomorphism as well. It suffices to prove that $g$ is an iso.
    \vspace{2mm}
    
    \underline{Step 1}: We first show that $g$ has the following property: for any $\mathbf H$ in $[\Th,\Coalg]$ and any pair of coalgebra maps $x,y\colon H\to B$, if there exists a map $\mathbf w\colon \mathbf H\to\mathbf {B'} $ in $[\Th,\Coalg]$ with $w=gx=gy$, then $x=y$. 

    \vspace{2mm}
    \noindent Let $x,y$ and $\mathbf{w}$ be as above. For every map $\mathbf z\colon \mathbf Z\to \mathbf{Z'}$ in $[\Th,\Coalg]$, using the naturality of $\mathbf z$ and the equation \ref{alphai}, we obtain the following:
    \begin{equation}\label{eq1}
        \alpha_{Z'}^i\circ(z\otimes z)\circ \Delta_Z= z\circ \alpha_Z^i\circ\Delta_Z=0_{ZZ'}\quad \text{for every}\; i=1,\dots,n .
    \end{equation}
    Let us consider the coalgebra maps $\varphi^i\coloneqq \alpha_B^i\circ(x\otimes y)\circ\Delta_H$ where $i=1,\dots,n$. The following equalities hold:

    \noindent\begin{minipage}{0.65\textwidth}
        \begin{align*}
        g\circ \varphi^i= &g\circ \alpha^i_B\circ(x\otimes y)\circ \Delta_H\\
        =& \alpha_{B'}^i\circ (g\otimes g)\circ(x\otimes y)\circ\Delta_H\\
        =&\alpha^i_{B'}\circ(gx\otimes gy)\circ \Delta_H\\
        \overset{gx=gy=w}{=}& \alpha^i_{B'}\circ(w\otimes w)\circ \Delta_H\overset{\ref{eq1}}{=}0_{HB'}.
    \end{align*}
    We get $0_{HC'}=p'\circ g\circ \varphi^i=h\circ p\circ\varphi^i$ which implies $p\circ \varphi^i=0_{HC}$, because $\mathbf h$ is a isomorphism and $\mathbf{h}^{-1}\circ\mathbf{0}_{\mathbf{HC'}}=\mathbf{0}_{\mathbf{HC}}$. Since equalizers in $[\Th,\Coalg]$ are constructed as in $\Coalg$, $k$ is the equalizer of the pair $(p,0_{BC})$ in $\Coalg$. Hence, for every $i=1,\dots,n$, there exists an arrow $\overline{\varphi^i}$ such that $k\circ \overline{\varphi^i}= \varphi^i$.
    \end{minipage}
    \begin{minipage}{0.35\textwidth}
        \begin{figure}[H]
        \centering
        \begin{tikzpicture}
            \node(A) at (-2,0) {$A$};
            \node(B) at (0,0) {$B$};
            \node(C) at (2,0) {$C$};
            \node(A') at (-2,-2) {$A'$};
            \node(B') at (0,-2) {$B'$};
            \node(C') at (2,-2) {$C'$};
            \node(H) at (-1.5,1.5) {$H$};

            \draw[->] (A) to node[above]{\scriptsize{$k$}} (B);
            \draw[->] (B) to node[above]{\scriptsize{$p$}} (C);
            \draw[->] (C) to[out=200,in=340] node[below] {\scriptsize{$s$}} (B);
            \draw[->] (A') to node[above]{\scriptsize{$k'$}} (B');
            \draw[->] (B') to node[above]{\scriptsize{$p'$}} (C');
            \draw[->] (C') to[out=200,in=340] node[below] {\scriptsize{$s'$}} (B');
            \draw[->] (A) to node[left]{\scriptsize{$f$}} (A');
            \draw[->] (B) to node[left]{\scriptsize{$g$}} (B');
            \draw[->] (C) to node[right]{\scriptsize{$h$}} (C');
            \draw[->] (H) to node[above]{\scriptsize{$\varphi^i$}} (B);
            \draw[->,dashed] (H) to node[left]{\scriptsize{$\overline{\varphi^i}$}} (A);
            
        \end{tikzpicture}
    \end{figure}
    \end{minipage}

    \vspace{0mm}
    \noindent Using these maps, we get $$k'\circ f\circ 0_{HA}=0_{HB'}=g\circ \varphi^i=g\circ k\circ\overline{\varphi^i}=k'\circ f\circ\overline{\varphi^i},$$ 
    so $\overline{\varphi^i}=0_{HA}$ because $k'$ is an equalizer ---hence a mono---and $f$ is an iso. Thus, $\alpha_B^i\circ(x\otimes y)\circ\Delta_H=\varphi^i=0_{HB}$ for every $i=1,\dots,n$.
    
    \noindent Let us observe that $x=(\mathrm{id}_B\otimes \epsilon_B)\circ(x\otimes y)\circ\Delta_H$ since $\mathrm{id}_B\otimes \epsilon_B$ is the first projection from the product $B\otimes B$ and $(x\otimes y)\circ\Delta_H$ is the unique map from $H$ to  $B\otimes B$ induced by $x$, $y$ and the universal property of the product $B \otimes B$.
    By using the linearized equation \ref{beta}, we obtain:
    \begin{align*}
        x=&(\mathrm{id}_B\otimes \epsilon_B)\circ(x\otimes y)\circ\Delta_H\\
        \overset{\ref{beta}}{=}&\beta_B\circ(\alpha_B^1\otimes \dots\otimes \alpha^n_B\otimes (\epsilon_B\otimes \mathrm{id_B}))\circ \Delta_{B\otimes B}^{(n)}\circ(x\otimes y)\circ\Delta_H\\
        =&\beta_B\circ(\alpha_B^1\otimes \dots\otimes \alpha^n_B\otimes (\epsilon_B\otimes \mathrm{id_B}))\circ (x\otimes y)^{\otimes n+1}\circ\Delta_{H\otimes H}^{(n)}\circ\Delta_H\\
        =&\beta_B\circ\Big([\alpha_B^1\circ(x\otimes y)]\otimes \dots\otimes [\alpha^n_B\circ(x\otimes y)]\otimes [(\epsilon_B\otimes \mathrm{id_B})\circ(x\otimes y)]\Big)\circ\Delta_H^{\otimes n+1}\circ\Delta_H^{(n)}\\
        =&\beta_B\circ\Big([\alpha_B^1\circ(x\otimes y)\circ\Delta_H]\otimes \dots\otimes [\alpha^n_B\circ(x\otimes y)\circ\Delta_H]\otimes [(\epsilon_H\otimes y)\circ\Delta_H]\Big)\circ\Delta_H^{(n)}\\
        =&\beta_B\circ\Big(\varphi^1\otimes \dots\otimes \varphi^n\otimes [(\epsilon_H\otimes y)\circ\Delta_H]\Big)\circ\Delta_H^{(n)}\\
        =&\beta_B\circ\Big(0_{HB}\otimes \dots\otimes 0_{HB}\otimes [(\epsilon_H\otimes y)\circ\Delta_H]\Big)\circ\Delta_H^{(n)}\\
        \overset{\ref{eq1}}{=}&\beta_B\circ\Big([\alpha_B^1\circ(y\otimes y)\circ\Delta_H]\otimes \dots\otimes [\alpha^n_B\circ(y\otimes y)\circ\Delta_H]\otimes [(\epsilon_H\otimes y)\circ\Delta_H]\Big)\circ\Delta_H^{(n)}\\
        =&(\mathrm{id}_B\otimes \epsilon_B)\circ(y\otimes y)\circ\Delta_H=y.
    \end{align*}

    \vspace{2mm}
    \underline{Step 2}: We want to show that $g$ is a split epimorphism. 

    \vspace{2mm}    
    \noindent Let us define $\psi\coloneqq g\circ s\circ h^{-1}\circ p'$ and $\varphi^i\coloneqq \alpha^i_{B'}\circ(\mathrm{id}_{B'}\otimes \psi)\circ\Delta_{B'}$ for every $i=1,\dots,n$.

    \noindent\begin{minipage}{0.55\textwidth}
        We can compute $p'\circ\varphi^i$ for every $i=1,\dots,n$:
        \begin{align*}
            p'\circ\varphi^i=& p'\circ \alpha^i_{B'}\circ(\mathrm{id}_{B'}\otimes \psi)\circ\Delta_{B'}\\           =&\alpha^i_{C'}\circ(p'\otimes p'\psi)\circ\Delta_{B'}\\
            =& \alpha^i_{C'}\circ(p'\otimes p')\circ\Delta_{B'}=0_{B'C'},
        \end{align*}
        where the last equality holds since $p'\psi=p'gsh^{-1}p'=hpsh^{-1}p'=p'$. Hence, for every $i=1,\dots,n$, there are some maps $\overline{\varphi^i}\colon B'\rightarrow A'$ such that $k'\circ\overline{\varphi^i}=\varphi^i$.
    \end{minipage}
    \begin{minipage}{0.45\textwidth}
        \begin{figure}[H]
            \centering
            \begin{tikzpicture}
            \node(A) at (-2,0) {$A$};
            \node(B) at (0,0) {$B$};
            \node(C) at (2,0) {$C$};
            \node(A') at (-2,-2) {$A'$};
            \node(B') at (0,-2) {$B'$};
            \node(C') at (2,-2) {$C'$};
            \node(H) at (-1.5,-3.5) {$B'$};

            \draw[->] (A) to node[above]{\scriptsize{$k$}} (B);
            \draw[->] (B) to node[above]{\scriptsize{$p$}} (C);
            \draw[->] (C) to[out=200,in=340] node[below] {\scriptsize{$s$}} (B);
            \draw[->] (A') to node[above]{\scriptsize{$k'$}} (B');
            \draw[->] (B') to node[above]{\scriptsize{$p'$}} (C');
            \draw[->] (C') to[out=200,in=340] node[below] {\scriptsize{$s'$}} (B');
            \draw[->] (A) to node[left]{\scriptsize{$f$}} (A');
            \draw[->] (B) to node[left]{\scriptsize{$g$}} (B');
            \draw[->] (C) to node[left]{\scriptsize{$h$}} (C');
            \draw[->] (H) to node[above]{\scriptsize{$\varphi^i$}} (B');
            \draw[->,dashed] (H) to node[left]{\scriptsize{$\overline{\varphi^i}$}} (A');
            \draw[->] (C') to[in=290, out=70] node[right] {\scriptsize{$h^{-1}$}} (C);
            \end{tikzpicture}
        \end{figure}
    \end{minipage}
    
    \vspace{0.2cm}
    \noindent Since $\mathrm{id}_{B'}=(\mathrm{id}_{B'}\otimes\epsilon_{B'})\circ(\mathrm{id}_{B'}\otimes \psi)\circ\Delta_{B'}$, using \ref{beta} we obtain:
    \begin{align*}
        \mathrm{id}_{B'}=& \beta_{B'}\circ(\alpha_{B'}^1\otimes \dots\otimes \alpha^n_{B'}\otimes (\epsilon_{B'}\otimes \mathrm{id}_{B'}))\circ \Delta_{B'\otimes B'}^{(n)}\circ(\mathrm{id}_{B'}\otimes \psi)\circ\Delta_{B'}\\
        =& \beta_{B'}\circ(\alpha_{B'}^1\otimes \dots\otimes \alpha^n_{B'}\otimes (\epsilon_{B'}\otimes \mathrm{id}_{B'}))\circ(\mathrm{id}_{B'}\otimes \psi)^{\otimes n+1}\circ \Delta_{B'\otimes B'}^{(n)}\circ\Delta_{B'}\\
        =& \beta_{B'}\circ\Big([\alpha_{B'}^1\circ(\mathrm{id}_{B'}\otimes \psi)]\otimes \dots\otimes [\alpha^n_{B'}\circ(\mathrm{id}_{B'}\otimes \psi)]\otimes (\epsilon_{B'}\otimes \psi)\Big)\circ\Delta_{B'}^{\otimes n+1}\circ \Delta_{B'}^{(n)}\\
        =& \beta_{B'}\circ(\varphi^1\otimes\dots\otimes\varphi^n\otimes\psi)\circ\Delta_{B'}^{(n)}\\        =&\beta_{B'}\circ(k'\overline{\varphi^1}\otimes\dots\otimes k'\overline{\varphi^n}\otimes\psi)\circ\Delta_{B'}^{(n)}\\        =&\beta_{B'}\circ(gkf^{-1}\overline{\varphi^1}\otimes\dots\otimes gkf^{-1}\overline{\varphi^n}\otimes gsh^{-1}p')\circ\Delta_{B'}^{(n)}\\        =&g\circ \underbrace{\beta_{B}\circ(kf^{-1}\overline{\varphi^1}\otimes\dots\otimes kf^{-1}\overline{\varphi^n}\otimes sh^{-1}p')\circ\Delta_{B'}^{(n)}}_{\eqqcolon g'}.
    \end{align*}

    \noindent Thus, $g$ has a section $g'$ in $\Coalg$. 
    \vspace{0.3cm}
    
    By ``Step 2'', we deduce that $g\circ g'=\id_{B'}$, hence $g\circ g'\circ g=g\circ\id_{B}$. Using ``Step 1'', we obtain the identity $g'\circ g=\id_B$. Therefore $g$ is an isomorphism.
    
\end{proof}

\begin{Thm}\label{Protomodularity}
    $[\Th,\Set]$ is protomodular if and only if $[\Th,\Coalg]$ is protomodular.
\end{Thm}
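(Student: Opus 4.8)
The plan is to obtain Theorem \ref{Protomodularity} as a synthesis of the two implications already established in this subsection, together with the classical Mal'tsev-type characterization of protomodular varieties. The forward direction — if $[\Th,\Coalg]$ is protomodular then $[\Th,\Set]$ is protomodular — is precisely Proposition \ref{Prop protomod}, so nothing further is needed there.

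For the converse, assume $[\Th,\Set]$ is protomodular. Since it is pointed by the standing hypothesis of this subsection, I would invoke the classical characterization of pointed protomodular varieties (\cite{borceuxbourn}, \cite{BournJanelidze}): there exist a positive integer $n$, binary terms $\alpha^1,\dots,\alpha^n$ and an $(n+1)$-ary term $\beta$ of $\Th$ satisfying
$$\alpha^i(x,x)=0 \quad (i=1,\dots,n), \qquad \beta\big(\alpha^1(x,y),\dots,\alpha^n(x,y),y\big)=x.$$
I would then feed these terms through the linearizing map $l_n$ of the linearizing process. Each displayed identity is a consequence of the equational theory defining $\Th$, so Theorem \ref{Izq} guarantees that its linearized form holds in every $\Th$-coalgebra; concretely, for every $\mathbf H$ in $[\Th,\Coalg]$ the coalgebra morphisms $\alpha_H^i\colon H\otimes H\to H$ and $\beta_H\colon H^{\otimes n+1}\to H$ obtained from $l_n$ satisfy exactly the linearized equations (\ref{alphai}) and (\ref{beta}). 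Applying Proposition \ref{Protomodularity mal'tsev condition} then yields that $[\Th,\Coalg]$ is protomodular, which closes the equivalence.

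The only point that is not pure bookkeeping — and where I expect the mild friction to lie — is verifying that linearizing the composite term $\beta(\alpha^1(x,y),\dots,\alpha^n(x,y),y)$ reproduces precisely the left-hand side of (\ref{beta}): that $l_n$ sends substitution of terms to the corresponding composition of coalgebra morphisms pre-composed with the iterated comultiplications $\Delta^{(n)}_{H\otimes H}$, with the counits $\epsilon_H$ inserted in the correct slots. This follows by unwinding the definition of $l_n$ as an algebra homomorphism from the term algebra into $\Hom(C^{\otimes n},C)$, combined with the formula for the induced operations on $\Hom(V,C)$ recalled in the Remark after the definition of an $\F$-coalgebra; no genuinely new idea is required, but one must keep careful track of arities and of the placement of the $\epsilon_H$'s. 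Everything substantive has already been carried out in Propositions \ref{Prop protomod} and \ref{Protomodularity mal'tsev condition}, so the theorem is essentially a corollary of them.
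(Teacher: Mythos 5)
Your proposal is correct and follows essentially the same route as the paper: the forward direction is Proposition \ref{Prop protomod}, and the converse combines the classical Mal'tsev characterization of protomodular varieties with linearization (Theorem \ref{Izq}) to produce the terms required by Proposition \ref{Protomodularity mal'tsev condition}. The only cosmetic difference is that the paper packages the ``$[\Th,\Set]$ protomodular $\Rightarrow$ linearized terms exist'' step as (the proof of) Corollary \ref{linearized protomod}, whereas you unfold it directly; the substance is identical.
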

\begin{proof}
    If $[\Th,\Set]$ is protomodular, then, using Corollary \ref{linearized protomod} together with the previous Proposition, we get the protomodularity of $[\Th,\Coalg]$. Due to Proposition \ref{Prop protomod}, the converse implication also holds.
\end{proof}

Theorem \ref{Protomodularity}  implies the protomodularity of all categories in the right column of Table \ref{Table of examples}.

\begin{Rmk}
    We have seen that some properties of algebraic varieties also hold if we take models in $\Coalg$ instead of $\Set$. Even if we are going to continue along that line of finding analogies between the two cases, it is important to notice that, in the coalgebraic framework, not everything works in the same way as the classical one. 

    In particular, one of the most important theorem concerning algebraic varieties ---the Birkhoff's variety theorem--- fails. A counterexample is given by the full subcategory $\mathsf{GrpHopf}$ of $\Hopfc$ whose objects are group Hopf algebras, namely Hopf algebras generated by grouplike elements. $\mathsf{GrpHopf}$ is closed under products, subalgebras and homomorphic images, but it is not axiomatizable by (linearized) equations. Indeed, it is straightforward to see that every equation holding in $\mathsf{GrpHopf}$, also holds in $\Hopfc$. Take two terms $s$ and $t$ such that $s^H=t^H$ for any $H$ in $\mathsf{GrpHopf}$; thus, for any group $G$, the group Hopf algebra $\K[G]$ satisfies $s^{\K[G]}=t^{\K[G]}$. However, since the functor $\K[-]\colon \Grp\to\Hopfc$ is fully faithful, the equation $s=t$ holds in any group, i.e. the two terms are equal in the Lawvere theory $\Th_{\Grp}$. By definition of $\Hopfc\cong [\Th_\Grp,\Coalg]$, the linearization of $s=t$ holds in any cocommutative Hopf algebra. Hence, the two categories $\mathsf{GrpHopf}$ and $\Hopfc$ cannot be distinguished just regarding their linearized identities. The idea behind this fact is that the difference between the two categories involves the description of the comultiplication, but the identity $\Delta(x)=x\otimes x$ is not the linearization of any equation of the Lawvere theory, since the right-hand side is not a linearized term.

\end{Rmk}

\end{subsection}
\end{section}

\begin{section}{Algebraic functors}\label{section free functor}

A key result about algebraic varieties is the existence of free functors, i.e.\ of left adjoints to any forgetful functor $[\Th,\Set]\ra\Set$. For instance, if $\Th$ is the theory of groups, the free functor associates with any set $X$ the free group generated by $X$.

More generally, a functor of Lawvere theories $R\colon \S\ra\Th$ induces a so-called algebraic functor
$[\Th,\Set]\ra[\S,\Set]$ ---we can interpret it as a functor forgetting some ``structure'', namely some operations or some axioms in $\Th$--- that always has a left adjoint (see \cite{Borceux_1994}, for instance). 
Free functors are a special case of left adjoints to algebraic functors since the forgetful functor is the algebraic functor induced by the unique arrow from the initial Lawvere theory to $\Th$. 

\begin{Def}
    In analogy to the classical case of algebraic varieties, we call an \textit{algebraic functor} a functor between categories of coalgebraic models $[\Th,\Coalg]\ra[\S,\Coalg]$ induced by the precomposition with a functor between the corresponding Lawvere theories.
\end{Def}

In literature, there are several examples of constructions of left adjoints to algebraic functors between coalgebraic models. First of all, that of a free functor from the category of cocommutative coalgebras $\Coalg$ to the category of cocommutative $\Hopfc$ was presented in \cite{Porst09102015} as an application of the more general construction of a free functor from the category of coalgebras to the category of Hopf algebras presented in \cite{Takeuchi1971561}. More recently, Agore and Chirvasitu have proved, in \cite{agore2025categoryhopfbraces}, some similar results concerning the category of Hopf braces: they showed the existence of left adjoints to the forgetful functors $\HBr\ra \Coalg$ and $\HBr\ra\Hopfc$. 

In light of these constructions, the question about the existence, in full generality, of left adjoints to algebraic functors between coalgebraic models naturally arises. 

We recall some results that give us a positive answer to this question. 

Since the category $\Coalg$ of cocommutative coalgebras is locally presentable \cite{Porst2006}, any category of coalgebraic models is locally presentable \cite{Adamek_Rosicky_1994}. 

We can also apply Theorem 1 and Theorem 2 of \cite{PorstFreeInternalGroup} in order to get the existence of left adjoint to any algebraic functor between categories of coalgebraic models and the monadicity of any forgetful functor $[\Th,\Coalg]\to\Coalg$. The monadicity of any algebraic functor $F\colon [\Th,\Coalg]\to[\S,\Coalg]$ easily follows, since $F$ is part of the following commutative diagram:

\begin{figure}[H]
    \centering
    \begin{tikzpicture}
        \node(0) at (-1,0.8) {$[\Th,\Coalg]$};
        \node(1) at (-1,-0.8) {$[\S,\Coalg]$};
        \node(2) at (1.5,0) {$\Coalg.$};

        \draw[->] (0) to node[left]{\scriptsize{$F$}} (1);
        \draw[->] (0) -- (2);
        \draw[->] (1) -- (2);
    \end{tikzpicture}
\end{figure}

Since the right-hand functors of the diagram are monadic and $F$ has a left adjoint, using, for instance, Theorem 4.4.4 in volume 2 of \cite{Borceux_1994}, we get the monadicity of $F$.

We summarize these results in the following Theorem.

\begin{Thm}
    Let $\Th$ and $\S$ be two algebraic theories and $R\colon \S\to\Th$ a morphism of algebraic theories that induces an algebraic functor $\_\circ R\colon[\Th,\Coalg]\to[\S,\Coalg]$.
    Then the functor $\_\circ R$ has a left adjoint and is monadic.
\end{Thm}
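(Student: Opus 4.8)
The plan is to obtain the statement by combining three ingredients, two of which are already recorded in the discussion preceding the theorem. First, the size-theoretic input: the category $\Coalg$ of cocommutative coalgebras is locally presentable \cite{Porst2006}, and consequently every category of coalgebraic models $[\Th,\Coalg]$ is locally presentable \cite{Adamek_Rosicky_1994}, being equivalent to the category of models of a limit sketch in a locally presentable category. In particular $[\Th,\Coalg]$ and $[\S,\Coalg]$ are complete and cocomplete, and the algebraic functor $\_\circ R$, being precomposition with $R$, preserves all limits and all colimits (these are computed pointwise in functor categories valued in $\Coalg$) and is accessible.

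Second, for the existence of a left adjoint I would apply Theorem 1 of \cite{PorstFreeInternalGroup}, which produces a left adjoint to every algebraic functor between categories of models of a Lawvere theory in a suitable locally presentable base; alternatively, one invokes the adjoint functor theorem for accessible, limit-preserving functors between locally presentable categories. At the same step, Theorem 2 of \cite{PorstFreeInternalGroup} gives that the forgetful functor $U\colon[\Th,\Coalg]\to\Coalg$ is monadic, and the same applies to $V\colon[\S,\Coalg]\to\Coalg$.

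Third, the monadicity of $\_\circ R$ itself follows by a triangle argument. In the commutative triangle displayed above one has $U=V\circ(\_\circ R)$, with both $U$ and $V$ monadic by the previous step and $\_\circ R$ equipped with a left adjoint. By Theorem 4.4.4 in volume 2 of \cite{Borceux_1994}, these data force $\_\circ R$ to be monadic: concretely, one checks that $\_\circ R$ creates exactly those coequalizers that become split after applying $U$, using that $U$ creates them and that $V$ preserves them and reflects isomorphisms.

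The part requiring care is not mathematical depth but verifying that the hypotheses of the cited theorems genuinely apply: presenting $[\Th,\Coalg]$ to \cite{PorstFreeInternalGroup} in precisely the form its statements demand, and checking that the base $\Coalg$ meets the running assumptions there — local presentability, together with the fact that, under cocommutativity, the relevant monoidal structure is the cartesian one, as observed in the preliminary section. Once these identifications are in place, the existence of the left adjoint and the monadicity of the two legs of the triangle are immediate, and the concluding triangle argument is purely formal.
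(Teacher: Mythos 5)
Your proposal follows essentially the same route as the paper: local presentability of $\Coalg$ (and hence of the categories of coalgebraic models), Theorems 1 and 2 of the cited Porst paper for the existence of the left adjoint and the monadicity of the forgetful functors to $\Coalg$, and the triangle argument via Theorem 4.4.4 of Borceux to conclude monadicity of $\_\circ R$. The only slip is the parenthetical claim that colimits in $[\Th,\Coalg]$ are computed pointwise --- they are not (the paper constructs coequalizers and coproducts as nontrivial quotients) --- but this is not load-bearing, since your argument only needs limit preservation and accessibility.
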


Although we already know ---by well-known results--- the existence of the left adjoint of an algebraic functor, we would like to recall an explicit construction of this adjoint that can be useful in the subsequent discussion.

Let $\Th$ and $\S$ be two algebraic theories with objects $\{T^i\}_{i\in\N}$ and $\{S^i\}_{i\in \N}$. Let us consider a morphism of algebraic theories $R\colon \S\ra \Th$ ---namely a product-preserving functor sending $T^i$ to $S^i$ for any $i$--- that induces an algebraic functor
$$\_\circ R\colon [\Th,\Coalg]\longrightarrow [\S,\Coalg].$$

\noindent The left adjoint $L$ to $\_\circ R$ 

\begin{figure}[H]
    \centering
    \begin{tikzpicture}
        \node(0) at (-2,0) {$[\Th, \Coalg]$};
        \node(1) at (2,0) {$[\S,\Coalg]$};
        \node(2) at (0,0) {\rotatebox{180}{$\perp$}};
        
        \draw[->] (0) to[out=10,in=170] node[above]{\scriptsize{$\_\circ R$}} (1);
        \draw[->] (1) to[out=190,in=350] node[below]{\scriptsize{$L$}} (0);
    \end{tikzpicture}
\end{figure}

can be described in the following way.

Let us start by defining, for every positive integer $n$, the category $\T{n}$ whose objects are those of the slice category $\Th_{\downarrow T^n}$ and morphisms from an object $(T^m, t)$ to $(T^l,r)$ are those of the form 
\begin{figure}[H]
    \centering
    \begin{tikzpicture}
        \node(1) at (0.2,0) {$T^m$};
        \node(2) at (2.8,0) {$T^l$};
        \node(3) at (1.5,-1.5) {$T^n$};

        \draw[->] (1) to node[above]{\scriptsize{$Rf$}} (2);
        \draw[->] (1) to node[left] {\scriptsize{$t$}} (3);
        \draw[->] (2) to node[right] {\scriptsize{$r$}} (3);
    \end{tikzpicture}
\end{figure}
\noindent for a map $f\colon S^m\ra S^l$ in $\S$.

We fix a model in $[\S,\Coalg]$

\begin{align*}
    \C \colon \;\S\;&\xrightarrow{\hspace*{1.5cm}}  \Coalg\\
     S^n &\xmapsto{\hspace*{1.5cm}} C^{\otimes n}\\
     (f\colon S^n\mapsto S^m) &\xmapsto{\hspace*{1.5cm}} (f^C\colon C^{\otimes n}\mapsto C^{\otimes m})\\
\end{align*}


\noindent and we take, for every positive integer $n$, the diagram
\begin{align*}
    D(n) \colon \;\T{n}\;&\xrightarrow{\hspace*{1.5cm}}  \Coalg\\
     (T^m,t) &\xmapsto{\hspace*{1.5cm}} C(n)_t\coloneqq C^{\otimes m}\\
     (Rf\colon (T^m,t)\mapsto (T^l,r)) &\xmapsto{\hspace*{1.5cm}} (f^C\colon C^{\otimes m}\mapsto C^{\otimes l}).\\
\end{align*}
From now on, we write for short $D$ and $C_t$ in place of $D(1)$ and $C(1)_t$.

\noindent Notice that, since the category $\Coalg$ has small colimits (\cite{Porst2006}), the colimit $\colim_{D(n)} C(n)_t$ does exist in $\Coalg$.

Using the fact that the tensor product commutes with colimits, it is possible to prove the following lemma, which simplifies the description of $L$.

\begin{Lem}\label{product}
    With the same notation as above, the following isomorphisms hold 
    $$\big(\colim_D C_t\big)^{\otimes n}\cong \colim_{D\times \cdots\times D} C_{t_1}\otimes \cdots \otimes C_{t_n}\cong \colim_{D(n)} C(n)_t.$$
\end{Lem}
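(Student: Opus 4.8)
The plan is to establish the two isomorphisms separately: the first is a formal consequence of the cocontinuity of the tensor product, and the second is a cofinality argument. For the first one, recall that colimits in $\Coalg$ are computed as in the category of $\K$-vector spaces (see \cite{GrunenfelderPare}) and that the tensor product of coalgebras is, on underlying spaces, the tensor product of vector spaces, which preserves colimits in each variable; hence the same holds for $\otimes$ on $\Coalg$, and therefore so does the $n$-fold tensor $\Coalg\times\cdots\times\Coalg\to\Coalg$. The standard Fubini-type identity for a functor that is cocontinuous in each variable then gives $(\colim_D C_t)^{\otimes n}\cong\colim_{D\times\cdots\times D}C_{t_1}\otimes\cdots\otimes C_{t_n}$, the right-hand colimit being taken over the $n$-fold product category $\T{1}\times\cdots\times\T{1}$.

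For the second isomorphism I would introduce the concatenation functor $\Phi\colon\T{1}\times\cdots\times\T{1}\to\T{n}$ sending a tuple $((T^{a_i},s_i))_{i=1}^{n}$ to the object $(T^{a_1+\cdots+a_n},\,s_1\times\cdots\times s_n)$, and a tuple of morphisms of $\T{1}$ to the corresponding product map. This is well defined precisely because $R$, being a morphism of Lawvere theories, preserves finite products, so that a product of maps lying in the image of $R$ again lies in the image of $R$; and, using that $\C$ preserves finite products, one checks that $D(n)\circ\Phi$ is naturally isomorphic to the functor $((T^{a_i},s_i))_i\mapsto C_{s_1}\otimes\cdots\otimes C_{s_n}$ appearing in the middle term of the statement. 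It therefore suffices to show that $\Phi$ is final, i.e.\ that for every object of $\T{n}$ the associated comma category is non-empty and connected; this yields $\colim_{D(n)}C(n)_t\cong\colim_{\T{1}\times\cdots\times\T{1}}(D(n)\circ\Phi)$, which combined with the first isomorphism completes the proof.

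To check finality, fix an object $(T^m,t)$ of $\T{n}$ and write $t^{(j)}\colon T^m\to T$ for the $j$-th component of $t\colon T^m\to T^n$. Decomposing structure maps into their components identifies the objects of the comma category $(T^m,t)\downarrow\Phi$ with $n$-tuples $(f_j\colon(T^m,t^{(j)})\to(T^{a_j},s_j))_{j=1}^{n}$ of morphisms of $\T{1}$. This category is non-empty: the diagonal $\delta_{T^m}\colon T^m\to T^{nm}$, which lies in the image of $R$, together with the identities of the objects $(T^m,t^{(j)})$, gives a distinguished object $O_0$. Moreover, for any object corresponding to a tuple $(f_j)_j$, the maps $h_j\coloneqq f_j\circ q_j$ --- where $q_j\colon T^{nm}\to T^m$ denotes the $j$-th projection, again in the image of $R$ --- assemble into a morphism $O_0\to(f_j)_j$ of the comma category: the identities $q_j\circ\delta_{T^m}=\id_{T^m}$ give compatibility with the structure maps, and $h_j\circ\delta_{T^m}=f_j$. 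Hence every object is connected to $O_0$, so the comma category is non-empty and connected, and $\Phi$ is final.

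I expect the only delicate points to be bookkeeping: checking that $\Phi$ is well defined --- where the hypothesis that $R$ preserves products is essential --- and that $D(n)\circ\Phi$ really is the $n$-fold tensor diagram. Once the comma categories are unwound into tuples of $\T{1}$-morphisms, the cofinality verification is short, so I do not anticipate any serious obstacle beyond keeping track of indices.
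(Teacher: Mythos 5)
Your proposal is correct, and it is in fact more detailed than what the paper records: the paper omits the proof of Lemma \ref{product} entirely, offering only the remark that ``the tensor product commutes with colimits''. That remark covers your first isomorphism, which you justify properly by computing colimits of $\Coalg$ on underlying vector spaces and invoking Fubini for a functor cocontinuous in each variable --- this is the right way to argue, since the relevant fact is cocontinuity of $\otimes$ on underlying spaces, not any general commutation of categorical products with colimits. The genuinely substantive addition is your treatment of the second isomorphism via the concatenation functor $\Phi\colon \T{1}\times\cdots\times\T{1}\to\T{n}$ and its finality, and this works: the projections, diagonals and tuplings you need all lie in the image of $R$ because $R$ is a morphism of Lawvere theories, so the identification of objects of $(T^m,t)\downarrow\Phi$ with $n$-tuples of $\T{1}$-morphisms $(f_j\colon(T^m,t^{(j)})\to(T^{a_j},s_j))_j$ is legitimate, and your distinguished object $O_0$ receives the diagonal as structure map. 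One small point of phrasing: the connecting morphism $O_0\to(f_j)_j$ is literally the tuple $(f_j)_j$ viewed as a morphism of $\T{1}\times\cdots\times\T{1}$ from $((T^m,t^{(j)}))_j$ to $((T^{a_j},s_j))_j$; your maps $h_j=f_j\circ q_j$ are the components of its image under $\Phi$, and the identities $h_j\circ\delta_{T^m}=f_j$ are exactly the componentwise form of the required compatibility $\Phi((f_j)_j)\circ\delta_{T^m}=(f_1,\dots,f_n)$, so the verification is the right one even though the $h_j$ themselves, having domain $T^{nm}$, are not the morphism of the comma category. With that reading, the cofinality argument is complete and the lemma follows as you state.
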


\noindent We can now define a functor $L\C$:
\begin{align*}
    L\C \colon \;\Th\;&\xrightarrow{\hspace*{1.5cm}}  \Coalg\\
     T^n &\xmapsto{\hspace*{1.5cm}} \colim_{D(n)}C(n)_t\eqqcolon L\C(T^n)\\
     (g\colon T^n\mapsto T^k) &\xmapsto{\hspace*{1.5cm}} \big(g^{L\C}\colon L\C(T^n)\mapsto L\C(T^k)\big),\\
\end{align*}
where $g^{L\C}$ is the unique map that makes the following diagram commute for every $t\colon T^m\ra T^n$
\begin{figure}[H]
        \centering
        \begin{tikzpicture}
            \node(1) at (-2,2) {$C(n)_t$};
            \node(2) at (2,2) {$\colim_{D(n)}$};
            \node(3) at (-2,1.5) {\rotatebox{90}{$\cong$}};
            \node(4) at (-2,1) {$C^{\otimes m}$};
            \node(5) at (-2,0.5) {\rotatebox{90}{$\cong$}};
            \node(6) at (-2,0) {$C(k)_{gt}$};
            \node(7) at (2,0) {$\colim_{D(k)}.$};

            \draw[->,dashed] (2) to node[right]{\scriptsize{$g^{L\C}$}}(7);
            \draw[->] (1)--(2);
            \draw[->] (6)--(7);            
        \end{tikzpicture}
    \end{figure}

\noindent The functor $L\C$ preserves finite products, so it lies in the category $[\Th,\Coalg]$. Hence, we can define:
\begin{align*}
    L \colon \;[\S,\Coalg]\;&\xrightarrow{\hspace*{1.5cm}}  [\Th,\Coalg]\\
     \C\;&\xmapsto{\hspace*{1.5cm}} \; L\C\\
     (\alpha\colon \C\Rightarrow \C') &\xmapsto{\hspace*{1.5cm}} (L\alpha\colon L\C\Rightarrow L\C'),\\
\end{align*}
where $L\alpha_T\colon \colim C_t\ra \colim C'_t$ is the unique map such that, for every $s\colon T^m\ra T^n$, the following diagram commutes.

\begin{figure}[H]
        \centering
        \begin{tikzpicture}
            \node(0) at (-2,2) {$C_s$};
            \node(1) at (2,2) {$\colim C_t$};
            \node(2) at (-2,1.5) {\rotatebox{90}{$\cong$}};
            \node(3) at (-2,1) {$C^{\otimes m}$};
            \node(4) at (-2,-0.5) {$C'^{\otimes m}$};
            \node(5) at (-2,-1) {\rotatebox{90}{$\cong$}};
            \node(6) at (-2,-1.5) {$C'_s$};
            \node(7) at (2,-1.5) {$\colim C'_t$};

            \draw[->] (0)--(1);
            \draw[->] (3) to node[left]{\scriptsize{$\alpha_S^{\otimes m}=\alpha_{S^m}$}} (4);
            \draw[->] (6)--(7);
            \draw[->,dashed] (1) to node[right]{\scriptsize{$L\alpha_T$}}(7); 
        \end{tikzpicture}
    \end{figure}

    It is then possible to check  the following Lemma.

    \begin{Lem}
        The functor $L$ described above is well-defined and it is left adjoint to $-\circ R$.
    \end{Lem}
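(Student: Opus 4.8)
The plan is to treat two separate issues: \textbf{well-definedness} of $L$, which is a routine matter of invoking universal properties of colimits, and the \textbf{adjunction} $L\dashv(-\circ R)$, which I would obtain by recognising the explicit construction as a pointwise left Kan extension. For well-definedness: each colimit $\colim_{D(n)}C(n)_t$ exists because $\Coalg$ is cocomplete \cite{Porst2006}, and Lemma \ref{product} identifies it with $\big(\colim_D C_t\big)^{\otimes n}$, so $L\C$ is a product-preserving assignment on objects. For a morphism $g$ of $\Th$ the arrow $g^{L\C}$ is \emph{defined} by a universal property (the displayed square), so there is nothing to build; its uniqueness, together with the fact that coprojections into a colimit are jointly epimorphic, forces $\id^{L\C}=\id$ and $(g'\circ g)^{L\C}=(g')^{L\C}\circ g^{L\C}$, whence $L\C$ is a functor lying in $[\Th,\Coalg]$ by Lemma \ref{product}. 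The same uniqueness principle applied to the defining square of $L\alpha$ shows $L\alpha$ is a well-defined morphism of $[\Th,\Coalg]$ and that $L$ preserves identities and composites, so $L\colon[\S,\Coalg]\to[\Th,\Coalg]$ is a functor.

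For the adjunction, the key observation is that, since $R$ is bijective on objects, $\T{n}$ is exactly the comma category $(R\downarrow T^n)$, and under this identification $D(n)$ is $\C$ composed with the canonical projection $(R\downarrow T^n)\to\S$. Hence, object by object and (by comparing the universal properties above) morphism by morphism and $2$-cell by $2$-cell, the functor $L$ coincides with the pointwise left Kan extension $\mathrm{Lan}_R(-)\colon\mathrm{Fun}(\S,\Coalg)\to\mathrm{Fun}(\Th,\Coalg)$, which satisfies $\mathrm{Lan}_R\dashv(-\circ R)$ between the ordinary functor categories. Now for $\C$ in $[\S,\Coalg]$ and any $\mathbf B$ in $[\Th,\Coalg]$, Lemma \ref{product} says that $L\C=\mathrm{Lan}_R\,\C$ is product-preserving, and $\mathbf B\circ R$ is product-preserving because $R$ is; since $[\Th,\Coalg]$ and $[\S,\Coalg]$ are \emph{full} subcategories of the respective functor categories, the adjunction bijection $\mathrm{Fun}(\Th,\Coalg)(\mathrm{Lan}_R\,\C,\mathbf B)\cong\mathrm{Fun}(\S,\Coalg)(\C,\mathbf B\circ R)$ is already a bijection $[\Th,\Coalg](L\C,\mathbf B)\cong[\S,\Coalg](\C,\mathbf B\circ R)$, natural in $\C$ and $\mathbf B$. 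This yields $L\dashv(-\circ R)$.

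If one prefers to stay self-contained and avoid citing left Kan extensions, the same conclusion can be reached by exhibiting the unit directly: take $\eta_\C\colon\C\Rightarrow L\C\circ R$ to have component at $S^n$ the $n$-th tensor power of the coprojection $C=C_{\id_T}\to\colim_D C_t$ at the object $(T,\id_T)$, and check its universal property by hand. Given $\mathbf B$ in $[\Th,\Coalg]$ and $\alpha\colon\C\Rightarrow\mathbf B\circ R$, the assignment $(T^m,t)\mapsto \mathbf B(t)\circ\alpha_S^{\otimes m}\colon C^{\otimes m}\to \mathbf B(T)$ is a cocone on $\mathbf B(T)$ --- this uses only the naturality of $\alpha$ and the functoriality of $\mathbf B$ --- hence induces $\overline{\alpha}_T\colon\colim_D C_t\to\mathbf B(T)$; setting $\overline{\alpha}_{T^n}\coloneqq\overline{\alpha}_T^{\otimes n}$ gives a morphism $\overline{\alpha}\colon L\C\to\mathbf B$ in $[\Th,\Coalg]$ with $(\overline{\alpha}\circ R)\circ\eta_\C=\alpha$, unique because the coprojections are jointly epimorphic and any competitor is forced on the leg $(T^m,t)$ by naturality together with the defining property of $L\C(t)$.

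I expect the only genuinely non-formal ingredient to be Lemma \ref{product}; everything else is careful bookkeeping. The fiddliest point is checking that $g\mapsto g^{L\C}$ is compatible with the coherence isomorphisms of Lemma \ref{product} for \emph{all} morphisms $g$ of $\Th$, not merely those in the image of $R$ (this is what makes $L$ a functor on all of $\Th$, hence identifies it with $\mathrm{Lan}_R$); equivalently, in the direct approach, the obstacle is the naturality of $\overline{\alpha}$ and the uniqueness clause, both of which hinge on re-expressing the coprojection at an arbitrary object $(T^m,t)$ through $L\C(t)$ and the coprojections at the identity objects.
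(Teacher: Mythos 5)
Your proof is correct, and in fact it supplies a verification that the paper itself omits: the paper only presents the colimit construction of $L$ and then asserts that ``it is then possible to check'' the Lemma. Your two routes are both sound. The Kan-extension packaging is the more conceptual one: since $R$ is bijective on objects, $\T{n}$ is (essentially) the comma category $(R\downarrow T^n)$ and $L\C(T^n)=\colim_{(R\downarrow T^n)}\C\circ P$ is the pointwise left Kan extension formula, so $L\dashv(\_\circ R)$ at the level of the full functor categories, and the adjunction restricts to the full subcategories of product-preserving functors because Lemma \ref{product} guarantees $L\C$ is product-preserving and $\mathbf{B}\circ R$ is automatically so; this is exactly what the paper's construction amounts to, just named. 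Your second, self-contained argument (unit given by tensor powers of the coprojection at $(T,\id_T)$, cocone $(T^m,t)\mapsto \mathbf{B}(t)\circ\alpha_S^{\otimes m}$, uniqueness via joint epimorphy of coprojections and the identification $\iota_t=t^{L\C}\circ\iota_{\id}$) is the verification the paper implicitly leaves to the reader, and you correctly isolate the only delicate points: compatibility of $g\mapsto g^{L\C}$ with the isomorphisms of Lemma \ref{product} for all $g$ in $\Th$, and the naturality/uniqueness of $\overline{\alpha}$. One small caution on the Kan-extension identification: the paper's $\T{n}$ has as morphisms arrows ``of the form $Rf$'', so if $R$ is not faithful it is a priori a quotient of the comma category $(R\downarrow T^n)$ rather than literally equal to it; since the diagram $D(n)$ is defined through $\C$, the cocones over the two diagrams coincide whenever $D(n)$ is well defined, so the colimits (and hence your identification with $\mathrm{Lan}_R$) are unaffected, but it is worth a sentence if you write this up.
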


    The existence of left adjoint to algebraic functors for categories of coalgebraic models specializes in Takeuchi's construction \cite{Takeuchi1971561} in the cocommutative case. For this, take $\S$ to be the initial theory, namely the trivial Lawvere theory of finite non-empty sets and $\Th$ to be the  theory of groups $\Th_\Grp$. Under these assumptions, the algebraic functor becomes the forgetful functor $\Hopfc\to\Coalg$, whose left adjoint is, up to isomorphism, the free functor described by Takeuchi in \cite{Takeuchi1971561}.

    Similarly, we recover the result about the existence of free functors $\Coalg\to\HBr$ and $\Hopfc\to\HBr$ proved in \cite{agore2025categoryhopfbraces} by taking the theory of skew braces $\Th_\SKB$ and the algebraic functors obtained by precomposition of the initial functor with  the functor $\Th_\SKB\to\Th_\Grp$. The left adjoints to the so obtained algebraic functors are, up to iso, the required free functors. Notice that we can choose two different functors $\Th_\SKB\to\Th_\Grp$, because in the theory of skew braces there are two different group structures and we can decide which one we want to forget.

    
\end{section}    

\begin{section}{Colimits}\label{Section Colimts}

We have just seen that every category of coalgebraic models of a Lawvere theory has a forgetful functor to $\Coalg$ that is a right adjoint; it follows that limits in a category $[\Th,\Coalg]$ exist and are computed as in $\Coalg$. 

What about colimits?
It is well known that the category of cocommutative Hopf algebras is complete and cocomplete  \cite{Sweedler,Porst2011,AgoreLim} and the same holds for the category of Hopf braces \cite{AgoreConstructingHB,agore2025categoryhopfbraces}.
Hence, it is natural to ask ourselves if colimits can be constructed in any category of coalgebraic models. In fact, we now provide a description for colimits in any such category.

\begin{subsection}{Coequalizers}\label{coequalizer construction}

Let us fix a Lawvere theory $\Th$. 

\begin{Def}
    Given $A\in[\Th,\Coalg]$, a \textit{coideal} of $A$ is a linear subspace $I\subset A$ such that
    \begin{enumerate}
        \item $\Delta(I)\subset I \otimes A+A\otimes I$,
        \item $\epsilon(I)=0$.
    \end{enumerate}
    We call a $\Th$\textit{-ideal} a subset $I\subset A$ that is an ideal with respect to the operations in $\Th$, namely for every operation $t\colon T^n\ra T$ it holds that $t^A(a_1\otimes\cdots\otimes a_n)\in I$ whenever there exists an index $i$ such that $a_i\in I$.   
    
\end{Def}

It is straightforward to check the following remarks.

\begin{Rmk}
    If $I$ is a coideal and a $\Th$-ideal of $A$, the linear quotient $A/I$ is a $\Th$-coalgebra and the quotient map $A\ra A/I$ is a map of $\Th$-coalgebras.
\end{Rmk}

\begin{Rmk}\label{ker ideal}
    Let us assume $[\Th,\Coalg]$ to be pointed. 
    Given a map of coalgebraic models of $\Th$, say $f\colon A\ra B$, the linear subspace $\ker f$ given by the vector space kernel of $f$ is a $\Th$-ideal (but not a coideal in general) of $A$. Indeed, for any term $t\colon T^n\ra T$ and every elements $a\in \ker f$, $a_1,\dots,a_{n-1}\in A$, we have $$f(t^A(a\otimes a_1\otimes\cdots\otimes a_{n-1}))=t^B(0^B\otimes f(a_1)\otimes\cdots \otimes f(a_{n-1}))=t(0^{B^{\otimes n}})=0^B.$$
\end{Rmk}

\begin{Prop}\label{T ideal generated by a coideal}
    Let $A\in[\Th,\Coalg]$ and let $I$ be a coideal of $A$. Then, the $\Th$-ideal generated by $I$ ---namely the smallest $\Th$-ideal containing $I$--- exists and it is also a coideal. We denote it by $\langle I\rangle_\Th$.
\end{Prop}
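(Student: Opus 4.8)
The plan is to build $\langle I\rangle_\Th$ explicitly as the span of all elements obtained by "plugging $I$ into terms", and then check that this span is both a $\Th$-ideal and a coideal. Concretely, I would set $J$ to be the linear span of all elements of the form $t^A(a_1\otimes\cdots\otimes a_n)$ where $t\colon T^n\to T$ is an operation of the theory, each $a_j\in A$, and at least one $a_i\in I$ (including the case where the term is just a projection applied to an element of $I$, so that $I\subseteq J$). By construction $J$ is a linear subspace containing $I$, and it is clearly contained in every $\Th$-ideal containing $I$, so \emph{if} $J$ is itself a $\Th$-ideal it is the smallest one. That $J$ is a $\Th$-ideal is essentially the observation that composing a term with another term is again a term: if $b\in J$ appears as one input of some operation $s\colon T^m\to T$, writing $b$ as a linear combination of elements $t^A(\dots)$ with an $I$-entry and using that $s^A$ is linear in each slot (it is a coalgebra map out of a tensor power, hence in particular $\K$-linear), each resulting summand is $(s\circ(\id^{\otimes}\,\cdot\,t\,\cdot\,\id^{\otimes}))^A$ applied to a tuple still containing an $I$-entry, hence lies in $J$.

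The substantive point — and the step I expect to be the main obstacle — is showing $J$ is a coideal, i.e.\ $\Delta(J)\subseteq J\otimes A + A\otimes J$ and $\epsilon(J)=0$. The counit condition is easy: $\epsilon\circ t^A = \epsilon^{\otimes n}$ up to the coalgebra-map identities, and since $\epsilon(I)=0$ any tuple with an $I$-entry is killed. For the comultiplication, the key is that each $t^A\colon A^{\otimes n}\to A$ is a \emph{morphism of coalgebras}, so $\Delta_A\circ t^A = (t^A\otimes t^A)\circ \Delta_{A^{\otimes n}}$, where $\Delta_{A^{\otimes n}}$ is the tensor-product comultiplication (using cocommutativity to identify the relevant factors). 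Now apply this to $t^A(a_1\otimes\cdots\otimes a_n)$ with, say, $a_i\in I$: expanding $\Delta_{A^{\otimes n}}$ componentwise and using $\Delta(a_i)\in I\otimes A + A\otimes I$ (the defining property of the coideal $I$), every term in the resulting sum has its "$a_i$-part" landing in $I$ in either the left or the right tensor factor after we reassemble via $t^A\otimes t^A$; hence the whole thing lies in $J\otimes A + A\otimes J$. The care needed here is purely bookkeeping of the iterated comultiplication and of which leg each surviving $I$-factor ends up in — it is the linearized analogue of the classical computation that the ideal generated by a coideal in a bialgebra is a biideal — but no new idea beyond "$t^A$ is a coalgebra map" and "$I$ is a coideal" is required.

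Finally I would record that $J$ contains $I$, is a $\Th$-ideal, and is a coideal, and that minimality among $\Th$-ideals was noted above; therefore $J=\langle I\rangle_\Th$ and it is a coideal, which is the claim. (One may also remark, for use later, that by the earlier Remark the linear quotient $A/\langle I\rangle_\Th$ is then a $\Th$-coalgebra and the projection is a map of $\Th$-coalgebras, so this construction is exactly what will compute coequalizers in $[\Th,\Coalg]$.)
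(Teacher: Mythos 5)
Your proposal is correct and takes essentially the same route as the paper: the generated $\Th$-ideal is obtained by plugging elements of $I$ into the operations of the theory, and the coideal property is verified by combining the two facts that each $t^A$ is a morphism of coalgebras and that $\Delta(I)\subseteq I\otimes A+A\otimes I$, exactly as in the paper's argument. The only difference is organisational: the paper builds the ideal as a recursive union of levels $I_j$ and checks the coideal condition by induction, whereas you construct it in a single step as a linear span using closure of the operations under substitution --- if anything, taking the span is slightly tidier, since the coideal condition is a statement about a linear subspace.
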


\begin{proof}
    Let us define the following subsets of $A$ by recursion:
    \begin{itemize}
        \item $I_0\coloneqq I$;
        \item $I_j\coloneqq I_{j-1}\cup \{t^A(x_1\otimes \cdots\otimes x_n)\mid n\in\N,\;, t\colon T^n\ra T,\; x_1,\dots,x_n\in A,\; \exists i \text{ s.t. }x_i\in I_{j-1}\} $ .
    \end{itemize}
    We take $\overline{I}\coloneqq \bigcup_{j\in\N}I_j$ that it is clearly the smallest ideal containing $I$. Let us check by induction that every $I_j$ is a coideal. $I=I_0$ is a coideal by assumption. We assume that $I_j$ is a coideal and show it for $I_{j+1}$. Let $x\in I_{j+1}$: if $x\in I_j$, $\Delta(x)\in I_j\otimes A+ A\otimes I_j\subseteq I_{j+1}\otimes A+A\otimes I_{j+1}$, otherwise we can assume, without loss of generality, that $x$ is of the form $x=t^A(y\otimes x_1\otimes \cdots\otimes x_n)$ for $f\colon T^{n+1}\ra T$, $y\in I_j$ and $x_1,\dots,x_n\in A$. Since $\Delta(y)\in I_j\otimes A+A\otimes I_j$, we get 
    \begin{align*}
        \Delta(x)&=\Delta(t^A(y\otimes x_1\otimes \cdots\otimes x_n))=(t^A\otimes t^A)(\Delta(y)\otimes\cdots\otimes\Delta(x_n))\\
        &\in t^A(I_j\otimes A\otimes\cdots\otimes A)\otimes t^A(A\otimes\cdots \otimes A)+t^A(A\otimes\cdots\otimes A)\otimes t^A(I_j\otimes A\otimes\cdots\otimes A)\\
        &\subseteq I_{j+1}\otimes A+ A\otimes I_{j+1},
    \end{align*}
    i.e. $I_{j+1}$ is a coideal.
    Hence $\overline{I}=\langle I\rangle_{\Th}$ is the $\Th$-ideal generated by $I$.
\end{proof}

\begin{Prop}
    Let $A\xbigtoto[f]{g} B$ be two maps in $[\Th,\Coalg]$. The coequalizer of $f$ and $g$ is given by the linear quotient $q\colon B\ra \faktor{B}{\langle I\rangle_{\Th}}$ where 
    $I\coloneqq \{f(a)-g(a)\mid a\in A\}$.
\end{Prop}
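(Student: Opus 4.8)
The plan is to show that $q\colon B \to B/\langle I\rangle_\Th$ satisfies the universal property of the coequalizer in $[\Th,\Coalg]$. First I would verify that $q$ is a well-defined morphism of $\Th$-coalgebras: by Proposition \ref{T ideal generated by a coideal}, $\langle I\rangle_\Th$ is a $\Th$-ideal and a coideal provided $I$ itself is a coideal, so I must check that $I = \{f(a) - g(a) \mid a \in A\}$ is a coideal of $B$. The counit condition $\epsilon(f(a) - g(a)) = \epsilon_A(a) - \epsilon_A(a) = 0$ is immediate since $f,g$ are coalgebra maps. For the comultiplication condition, writing $\Delta_A(a) = a_1 \otimes a_2$, one has $\Delta_B(f(a) - g(a)) = f(a_1)\otimes f(a_2) - g(a_1)\otimes g(a_2)$; adding and subtracting $f(a_1)\otimes g(a_2)$ rewrites this as $f(a_1)\otimes(f(a_2) - g(a_2)) + (f(a_1) - g(a_1))\otimes g(a_2)$, which lies in $B \otimes I + I \otimes B$. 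Hence $I$ is a coideal, and the Remark preceding Proposition \ref{T ideal generated by a coideal} then guarantees $B/\langle I\rangle_\Th$ is a $\Th$-coalgebra with $q$ a $\Th$-coalgebra map.

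Next I would check that $q \circ f = q \circ g$: for every $a \in A$, $q(f(a)) - q(g(a)) = q(f(a) - g(a)) = 0$ since $f(a) - g(a) \in I \subseteq \langle I\rangle_\Th = \ker q$. Then comes the universal property: suppose $h\colon B \to D$ is a morphism in $[\Th,\Coalg]$ with $h \circ f = h \circ g$. Then $h$ vanishes on $I$, hence $\ker h$ is a $\Th$-ideal containing $I$ (it is a $\Th$-ideal by Remark \ref{ker ideal}), so by minimality $\langle I\rangle_\Th \subseteq \ker h$. At the level of vector spaces this yields a unique linear map $\bar h\colon B/\langle I\rangle_\Th \to D$ with $\bar h \circ q = h$; uniqueness as a linear map is automatic from $q$ being a linear epimorphism. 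It remains to argue $\bar h$ is a morphism of $\Th$-coalgebras, and this follows because $q$ is a surjective $\Th$-coalgebra map: the comultiplication, counit, and each operation $t^{B/\langle I\rangle_\Th}$ are the unique structure maps making $q$ a morphism, so the identities $\Delta_D \circ \bar h = (\bar h \otimes \bar h)\circ \Delta$, $\epsilon_D \circ \bar h = \epsilon$, and $t^D \circ \bar h^{\otimes n} = \bar h \circ t$ can each be verified after precomposing with the (componentwise) epimorphism $q$ (resp.\ $q^{\otimes n}$), where they reduce to the corresponding identities for $h$.

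The main obstacle I anticipate is the verification that $\bar h$ respects the $\Th$-operations — more precisely, ensuring that the diagram chase ``precompose with $q^{\otimes n}$'' is legitimate. This requires knowing that $q^{\otimes n}\colon B^{\otimes n} \to (B/\langle I\rangle_\Th)^{\otimes n}$ is an epimorphism (true since tensoring over a field is exact and preserves surjections) and that the operation $t^{B/\langle I\rangle_\Th}$ is \emph{defined} precisely as the factorization of $q \circ t^B$ through $q^{\otimes n}$ — which is exactly the content of the Remark that $A/I$ is a $\Th$-coalgebra with $A \to A/I$ a $\Th$-coalgebra map, applied here with $A$ replaced by $B$ and $I$ by $\langle I\rangle_\Th$. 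So strictly speaking this step is not hard once that Remark is invoked carefully; the only care needed is to phrase the factorization argument cleanly rather than re-deriving the quotient structure. A secondary, purely bookkeeping point is that this coequalizer is taken in $[\Th,\Coalg]$ and not merely in $\Coalg$: one should note that if $h$ is only required to be a $\Th$-coalgebra map (not an arbitrary coalgebra map), everything above still goes through verbatim, so no separate argument is needed.
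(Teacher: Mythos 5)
Your proposal is correct and follows essentially the same route as the paper's proof: show $I$ is a coideal, invoke Proposition \ref{T ideal generated by a coideal} so that $B/\langle I\rangle_\Th$ is a $\Th$-coalgebra, note $q$ coequalizes $f,g$, and use that $\ker h$ is a $\Th$-ideal containing $I$ (Remark \ref{ker ideal}) to factor $h$ uniquely. You merely spell out details the paper leaves implicit (the coideal computation and the check that $\overline{h}$ is a $\Th$-coalgebra map via surjectivity of $q^{\otimes n}$), which is fine.
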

\begin{proof}
    The subspace $I$ is clearly a coideal, thus, due to the previous proposition, $\faktor{B}{\langle I\rangle_{\Th}}$ is well defined and lies in $[\Th,\Coalg]$. The map $q$ coequalizes $f$ and $g$; furthermore, given another map $h\colon B\ra C$ such that $hf=hg$, it holds that $I\subseteq \ker h$. Since $\ker h$ is a $\Th$-ideal, we moreover have that $\langle I\rangle_{\Th}\subseteq \ker h$ and, thus, the map $h$ factors through to a unique map $\overline{h}\colon \faktor{B}{\langle I\rangle_{\Th}}\ra C$.   
\end{proof}

\begin{Rmk}\label{Cokernel}
    Let us assume $[\Th,\Coalg]$ to be pointed, and denote by $u_A\colon \K\to A$ the unique map from the zero object $\K$ to a $\Th$-coalgebra $A$. In order to compute the cokernel of a map $f\colon A\to B$ in $[\Th,\Coalg]$, we should take the $\Th$-coideal generated by $I\coloneqq \{ f(x)-u_B\epsilon_A(x)\mid x\in A\}$, where $\epsilon_A$ is the counit of $A$. A more explicit description of this cokernel is the following:
    $$I=\{y\in f[A]\mid \epsilon_B(y)=0\}\eqqcolon f[A]^+.$$
    Indeed, take an element of the form $f(x)-u_B\epsilon_A(x)\in I$. Then $$f(x)-u_B\epsilon_A(x)=f(x)-f(u_A\epsilon_A(x))=f(x-u_A\epsilon_A(x))\in f[A]$$ and $$\epsilon_B(f(x)-u_B\epsilon_A(x))=\epsilon_Bf(x)-\epsilon_Bu_B\epsilon_A(x)=\epsilon_A(x)-\epsilon_A(x)=0,$$ namely the element lies in $f[A]^+$. Conversely, if $f(x)=y\in f[A]^+$, from $\epsilon_B(y)=0$ it follows that $\epsilon_Bf(x)=\epsilon_A(x)=0$ and $u_B\epsilon_A(x)=0$, thus $y=f(x)-u_B\epsilon_A(x)\in I$.
    As a consequence, the cokernel of $f$ is the quotient $\faktor{B}{\langle f[A]^+\rangle}$
\end{Rmk}

\begin{Rmk}
    Let us spell out how the construction of coequalizers in $\Hopfc$ (for their description see \cite{Andruskiewitsch}) can be seen as a special case of this construction. Let us take $f,g\colon A\to B$ maps in $\Hopfc$ and $I\colon \{f(a)-g(a)\mid a\in A\}$. In order to understand the coequalizer of $f$ and $g$, we should study the ideal generated by the coideal $I$. Denoted by $\cdot$ and $S$ the multiplication and the antipode in $B$ respectively, the set $BIB\coloneqq\{b\cdot i\cdot b'\mid b,b'\in B,\;i\in I\}$ is trivially closed under the multiplication by elements in $B$ and is also closed under the antipode, since 
    $$S(b\cdot (f(a)-g(a))\cdot b')=S(b')\cdot (f(S(a))-g((S(a)))\cdot S(b).$$
    Hence, $BIB=\langle I\rangle_{\Th_{\Grp}}$ is the $\Th_\Grp$-ideal generated by $I$ and the coequalizer is given by $\faktor{B}{BIB}$.
\end{Rmk}
   
\end{subsection}

\begin{subsection}{Coproducts}\label{coproducts construction}

The construction of coproducts presented in this subsection is inspired by that of coproducts in the category $\HBr$ of cocommutative Hopf braces described in \cite{agore2025categoryhopfbraces}.

Let $\{A_i\}_{i\in I}$ be a family of coalgebraic models for $\Th$ and let us consider the forgetful functor - free functor adjunction
\begin{figure}[H]
    \centering
    \begin{tikzpicture}
        \node(0) at (-2,0) {$[\Th, \Coalg]$};
        \node(1) at (1.7,0) {$\Coalg.$};
        \node(2) at (0,0) {\rotatebox{180}{$\perp$}};
        
        \draw[->] (0) to[out=10,in=170] node[above]{\scriptsize{$U$}}  (1);
        \draw[->] (1) to[out=190,in=350] node[below]{\scriptsize{$F$}} (0);
    \end{tikzpicture}
\end{figure}
Our aim is to construct coproducts in $[\Th,\Coalg]$.

Let $(C, \iota_i\colon U(A_i)\ra C)$ be the coproduct of $\{A_i\}_{i\in I}$ in $\Coalg$, i.e. $C=\bigsqcup_{i\in I}U(A_i)$, and take its image $FC$ in $[\Th,\Coalg]$ under the free functor $F$. There are coalgebra maps $\alpha_i\coloneqq\eta_C\circ \iota_i\colon U(A_i)\ra UFC$, where $\eta$ is the unit of the adjunction. Since the maps $\alpha_i$ do not lie in $[\Th,\Coalg]$ in general, we want to take a suitable quotient of $UFC$ that makes the $\alpha_i$ preserve the operations in $\Th$.
Denote by $\{f_j\colon T^{n_j}\ra T\}$ the operations in $\Th$. There are coideals of $UFC$
$$I_j\coloneqq\{\alpha_i(f^
{A_i}_j(x_1\otimes\cdots\otimes x_
{n_j}))-f^FC(\alpha_i(x_1)\otimes\cdots\otimes\alpha_i(x_{n_j}))\mid x_1,\dots,x_{n_j}\in A_i,i\in I\}.$$
Since the union of coideals is still a coideal, let us consider the coideal $I\coloneqq \bigcup_{j\in J}I_j$. By Proposition \ref{T ideal generated by a coideal} the $\Th$-ideal and coideal $\langle I\rangle_\Th$ generated by $I$ exists, hence we can consider the $\Th$-coalgebra $A\coloneqq FC/\langle I\rangle_\Th$ with quotient map $q\colon FC\ra FC/\langle I\rangle_\Th$.
We obtain maps of coalgebras $u_i\coloneqq q\circ\alpha_i\colon A_i\ra A$ that are, in fact, maps of $\Th$-coalgebras, by definition of $A$.
Let us show that $(A,u_i\colon A_i\ra A)$ is the coproduct of the family $
\{A_i\}_{i\in I}$ in the category $[\Th,\Coalg]$.

Take $B$ a coalgebraic model together with maps $b_i\colon A_i\ra B$ in $[\Th,\Coalg]$. Since $C$ is the coproduct in $\Coalg$, there is a map of coalgebras $\varphi\colon C\ra UB$ such that $\varphi\circ \iota_i=b_i$. By adjunction, we get a map $\overline{\varphi}\colon F(C)\ra B$ satisfying $U\overline{\varphi}\circ \alpha_i=U\overline{\varphi}\circ\eta_C\circ \iota_i=\varphi\circ\iota_i=Ub_i$. Hence, $U\overline{\varphi}$ lifts to a map $A\ra B$ because it sends to zero every element in $I$, since $b_i$ is a map of $T$-coalgebras. This means that $A$ has the universal property of the coproduct in $[\Th,\Coalg]$. 
    
\end{subsection}

\end{section}

\begin{section}{Surjective maps of theories}\label{section full functors}

Let us consider two Lawvere theories $\S$ and $\Th$ with objects $\{S_i\}_{i\in\N}$ and $\{T_i\}_{i\in\N}$ and a morphism of theories between them, say $R\colon \S\ra \Th$. In this section, we want to study the case where the functor $R$ is a ``surjective map of theories'', namely when it is surjective on operations, i.e. it is a full functor. 
Informally, this means that $\Th$ has no more operations than $\S$, even if it could potentially have more commutative diagrams, i.e. more axioms. In the classical case, having a surjective map of theories means that the corresponding algebraic functor determines a Birkhoff subcategory \cite{Lawvere2008}. In this section, we want to analyze what happens for coalgebraic models.

First of all, let us notice that the fullness of $R$ implies that the induced algebraic functor between the categories of coalgebraic models
$$\_\circ R\colon [\Th,\Coalg]\longrightarrow[\S,\Coalg]$$
is fully faithful. Indeed, taken $\alpha\in\hom_{
[\S,\Coalg]
}(C\circ R,D\circ R)$, $\alpha$ also lies in $\hom_{
[\S,\Coalg]
}(C,D)$ because it is also a homomorphism with respect to operations in $\Th$ (the fact that $R$ is full means that every operation in $\Th$ comes from one in $\S$).

It is the case, for instance, of the forgetful functors $\Hopfcc\ra\Hopfc$ and $\HRadRng\ra\HBr\ra \HDiGrp$.

In this section, we show that the fullness of $R$ guarantees an easier description of the free functor (the right adjoint to $\_\circ R$), which we provide below.

\begin{Prop}\label{free construction for full functors}
    Let $R\colon\S\ra\Th$ be a surjective map of theories. Then, the free functor $L\colon [\S,\Coalg]\ra[\Th,\Coalg]$ can be described as a coequalizer in the category $[\S,\Coalg]$.
\end{Prop}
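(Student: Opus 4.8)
The plan is to exploit the fact that a surjective map of theories $R\colon\S\to\Th$ is, up to isomorphism, a quotient of $\S$ by a set of additional equations: the functor $R$ is bijective on objects and full, so $\Th$ is obtained from $\S$ by imposing that certain pairs of parallel arrows $(p_k,q_k)\colon S^{m_k}\rightrightarrows S^{n_k}$ in $\S$ become equal in $\Th$. Concretely, one chooses a (possibly large) family of such pairs $\{(p_k,q_k)\}_{k\in K}$ generating the kernel congruence of $R$; then a product-preserving functor $\mathbf{D}\colon\S\to\Coalg$ factors through $R$ precisely when $p_k^D=q_k^D$ for all $k$. This reduces the construction of the left adjoint $L$ to the following universal problem in $[\S,\Coalg]$: given $\mathbf{C}\in[\S,\Coalg]$, find the universal quotient on which all the linearized relations $p_k^{(-)}=q_k^{(-)}$ hold.

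The key steps, in order, are: (1) Record that $R$ being full and bijective on objects lets us present $\Th$ as a quotient theory of $\S$, and identify a generating set of relation-pairs. (2) For a fixed $\mathbf{C}\in[\S,\Coalg]$, build two parallel morphisms in $[\S,\Coalg]$ whose coequalizer is the desired object. The natural candidate: let $\mathbf{P}$ be the coproduct in $[\S,\Coalg]$ of copies of the free $\S$-coalgebra on $\mathbf{C}(S^{m_k})=C^{\otimes m_k}$ indexed by $k\in K$ — or, more cleanly, use the counit-type description already available from Section~\ref{section free functor}. Define $\mathbf{f},\mathbf{g}\colon \mathbf{P}\rightrightarrows \mathbf{C}$ so that on the $k$-th summand they encode $p_k^C$ and $q_k^C$ respectively (composed with the structural map realizing $C^{\otimes m_k}$ inside the free object). (3) Invoke the explicit coequalizer construction from Subsection~\ref{coequalizer construction}: the coequalizer of $\mathbf{f},\mathbf{g}$ in $[\S,\Coalg]$ is $\mathbf{C}/\langle I\rangle_\S$ where $I=\{f(a)-g(a)\mid a\in P\}$, which here is exactly the coideal spanned by the elements $p_k^C(x)-q_k^C(x)$. (4) Verify the universal property: a morphism $\mathbf{C}\to \mathbf{B}\circ R$ in $[\S,\Coalg]$ is the same as a morphism $\mathbf{C}\to\mathbf{B}$ of $\S$-coalgebras that kills all these elements (since $\mathbf{B}$, being a $\Th$-coalgebra, satisfies $p_k^B=q_k^B$), i.e.\ factors through the coequalizer; and conversely the coequalizer, satisfying all the relations, lands in $[\Th,\Coalg]$. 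This exhibits $L\mathbf{C}$ as the stated coequalizer and establishes the adjunction by uniqueness of adjoints (or directly).

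**Main obstacle.** The delicate point is the passage from ``satisfies the generating relations $p_k=q_k$'' to ``genuinely lies in the image of $\_\circ R$'', i.e.\ that the quotient $\Th$-coalgebra structure is well-defined on $\mathbf{C}/\langle I\rangle_\S$. This requires that the $\S$-operations descend along the quotient to $\Th$-operations, which is where one uses that $\langle I\rangle_\S$ is simultaneously a $\S$-ideal and a coideal (Proposition~\ref{T ideal generated by a coideal}) together with the fact that the relation-pairs generate the full kernel congruence of $R$ — so that any two $\S$-operations equal in $\Th$ induce the same map on the quotient. One must also be a little careful that the index set $K$ may be large; but since $\Coalg$ is locally presentable and has all small colimits, and the coequalizer construction of Subsection~\ref{coequalizer construction} only ever uses the coideal generated by a set of elements of $C$, this causes no real trouble. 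A secondary bookkeeping point is to phrase $\mathbf{f},\mathbf{g}$ as honest morphisms of $\S$-coalgebras (using the free–forgetful adjunction $F\dashv U$ to $\Coalg$ and the counit, exactly as in Subsection~\ref{coproducts construction}), rather than as mere families of coalgebra maps.
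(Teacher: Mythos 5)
Your proposal is correct and follows essentially the same route as the paper's proof: present $\Th$ as a quotient of $\S$ by pairs of operations identified by $R$, build the parallel pair whose components are the two sides of these relations, and compute its coequalizer in $[\S,\Coalg]$ via the quotient-by-generated-coideal construction of Subsection \ref{coequalizer construction}, checking that the resulting quotient satisfies the $\Th$-axioms (the paper does this using that the quotient map $q$ is surjective, so $q^{\otimes n}$ is epic) and hence lies in $[\Th,\Coalg]$. The only deviations are minor: the paper indexes over \emph{all} pairs $Rt_i=Rs_i$ (so your congruence-generation step is not needed) and identifies the coequalizer with $L\mathbf{C}$ by comparing universal properties with the colimit description of Section \ref{section free functor}, whereas you verify the left-adjoint universal property directly; your extra care in replacing the operations $p_k^C,q_k^C$ by honest morphisms in $[\S,\Coalg]$ out of free objects is, if anything, slightly more precise than the paper's formulation.
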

\begin{proof}
    We want to specialize the construction of Section \ref{section free functor}. 
    Let $\C\colon \S\ra\Coalg$ be a coalgebraic model of $\S$ with $C\coloneqq\C(T)$, we recall that $L\C$ is given by $L\C(T)\coloneqq \colim_{t\in D(1)} C_t$. We can take the set $\{Rt_i=Rs_i\}_{i\in I}$ of all axioms holding in $\Th$, where $t_i,s_i\colon S^{n_i}\ra S$. Since $R$ is full, from now on, we denote with the same symbol both an operation in $\S$ and its image under $R$. Let us consider the coproduct in $[\S,\Coalg]$ of all $C_{t_i}\cong C^{\otimes n_i}\cong C_{s_i}$ (it exists due to the construction provided in \ref{coproducts construction}) and the diagram
    \begin{figure}[H]
        \centering
        \begin{tikzpicture}
            \node(0) at (-2,0.5) {$\bigsqcup_{i\in I}C_{t_i}$};
            \node(1) at (-2,-0.5) {$\bigsqcup_{i\in I}C_{s_i}$};
            \node(2) at (1,0) {$C$};
            \node(3) at (-2,0) {\rotatebox{90}{$\cong$}};

            \draw[->] (0) to[out=0, in=160] node[above]{\scriptsize{$t$}} (2);
            \draw[->] (1) to[out=0, in=200] node[below]{\scriptsize{$s$}} (2);
            
        \end{tikzpicture}
    \end{figure}
    where $t\coloneqq \bigsqcup_{i\in I} t_i^C$ and $s\coloneqq \bigsqcup_{i\in I} s_i^C$ are induced by the universal property of the coproduct. We denote by $(\C',q\colon C\ra C')$ the coequalizer in $[\S,\Coalg]$ which defines a functor that lies, in fact, in $[\Th,\Coalg]$. Indeed, the following equivalences hold
    \begin{align*}
        qt=qs\iff q t_i^C=q s_i^C\iff t_i^{C'}q^{\otimes n_i}=s_i^{C'}q^{\otimes n_i}\iff t_i^{C'}=s_i^{C'},
    \end{align*}
    where the second ``if and only if'' follows from the naturality of $q$ and the third one holds since $q$ is surjective and so $q^{\otimes n_i}$ is an epimorphism.

    Our claim is that $\C'$ is isomorphic to $L\C$. 
    
    Denote by $\eta_r$ the composition $C_r\cong C^{\otimes n}\xlongrightarrow{r^C} C\xlongrightarrow{q} C' $. The family $\{\eta_r\mid r\colon S^n\to S\}$ defines a cocone: if $r=l\circ Rf$ for some $f\colon S^n\ra S^m$ and $l\colon S^m\to S$, by definition of $\C'$, $q$ coequalizes the terms $l^Cf^C$ and $r^C$, i.e. $ql^Cf^C=qr^C$. Hence, there is a unique map $L\C\ra \C'$.

    Conversely, since $L\C$ is the colimit over $D$, for any $i\in I$, we have
    $$\iota_{\id} s_i^C=\iota_{s_i}=\iota_{t_i}=\iota_{\id}t_i^C$$
    and so $\iota_\id t=\iota_\id s$. By the universal property of $\C'$ we get a unique map $\C'\ra L\C$ that allows us to conclude the proof.    
\end{proof}

The just obtained description of the free functor agrees with the intuitive idea that, in order to ensure the validity of certain axioms in a universal way, we need to consider a suitable quotient of the coalgebraic model, in analogy with the case of classical algebraic varieties.
In other words, the previous proposition tells us that, when we impose further linearized axioms to a certain category of coalgebraic models (without adding new operations), the free functor from the old category to the new one is obtained by quotienting out the ideal generated by all elements of the form $t(a)-s(a)$ for every new axiom of the form $t=s$.

This procedure applies, for instance, to the algebraic functor $G\colon \HRadRng\to\HBr$. Indeed the functor $\Th_{\HBr}\to\Th_{\HRadRng}$ is full, since every operation in $\HRadRng$ is the image of an operation in $\HBr$. Let us explicitly describe its left adjoint $F\colon \HBr\to\HRadRng$, namely the free Hopf radical ring associated with a Hopf brace. According to the construction presented in Proposition \ref{free construction for full functors}, if we take a Hopf brace $H$, $FH$ can be described as a coequalizer in $\HBr$. A priori, we should consider all axioms that hold in $\HRadRng$, but since there is a presentation of $\Th_\RadRng$ involving just two additional axioms with respect to those of $\Th_{\SKB}$, we can further simplify the description of the coequalizer.
Denoted by $(\cdot,\bullet,1,S,T)$ the operations in $\Th_\SKB$, we take $t_1, s_1\colon T^2\to T$ and $t_2,s_2\colon T^3\ra T$ defined by:
\begin{align*}
    t_1=&\cdot,\\
    t_2=&\cdot\circ(\pi_2,\pi_1),\\
    s_1=&\bullet\circ(\cdot\otimes \id),\\
    s_2=&\cdot\circ(\cdot\otimes \id)\circ(\bullet\otimes S\otimes \bullet)\circ(\pi_1,\pi_3,\pi_4,\pi_2,\pi_5)\circ(\id\otimes\id\otimes \Delta^{(3)}).
\end{align*}
Notice that their images under $H$ applied to elements $a\otimes b\in H^{\otimes 2}$ and $a\otimes b\otimes c\in H^{\otimes 3}$ are just:
\begin{align*}
    t_1^H(a\otimes b)=&a\cdot b,\\
    t_2^H(a\otimes b)=&b\cdot a,\\
    s_1^H(a\otimes b\otimes c)=&(a\cdot b)\bullet c,\\
    s_2^H(a\otimes b\otimes c)=&(a\bullet c_1)\cdot S(c_2)\cdot (b\bullet c_3),
\end{align*}
which are those appearing in Definition \ref{Def HRadRng}. Hence, $FH$ is the coequalizer of the arrows
$$H^{\otimes 2}\sqcup H^{\otimes 3}\xbigtoto[t_1^H\sqcup s_1^H]{t_2^H\sqcup s_2^H} H\xrightarrow{\hspace{0.8cm}}FC.$$
The description of coequalizers provided in $\ref{coequalizer construction}$ tells us that $FC$ is the quotient of $C$ by the $\Th_\SKB$-ideal generated by $I\coloneqq\{a\cdot b-b\cdot a,(a\cdot b)\bullet c -(a\bullet c_1)\cdot S(c_2)\cdot (b\bullet c_3)\mid a,b,c\in H\}$. This $\Th_\SKB$-ideal is the ``linearized'' analogue of the ''radicalator`` defined in \cite{GranLetourmyVendramin} in order to construct the free radical ring generated by a skew brace.

Now, let us show that the algebraic functor, in this case, determines a Birkhoff subvariety, as in the classical case. 

\begin{Def}
    A \textit{Birkhoff subcategory} $\mathsf{D}$ of a category $\mathsf{C}$ is a regular epi-reflective full subcategory that is closed under quotients, namely whenever we have a regular epimorphism $q\colon D\to Q$ and $D$ is in $\mathsf{D}$, $Q$ also lies in $\mathsf{D}$.
\end{Def}

\begin{Prop}\label{surj determines Birkhoff subvariety}
    Let $R\colon \S\ra \Th$ be a surjective map of theories. Then $[\Th,\Coalg]$ is a Birkhoff subcategory of $[\S,\Coalg]$.
\end{Prop}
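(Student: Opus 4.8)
The plan is to verify the three defining properties of a Birkhoff subcategory for the fully faithful inclusion $\_\circ R\colon [\Th,\Coalg]\hookrightarrow[\S,\Coalg]$, which was already shown to be full and faithful at the start of this section. First I would observe that $\_\circ R$ has a left adjoint $L$ (the free functor constructed in Section \ref{section free functor}, whose explicit shape for a surjective $R$ is given by Proposition \ref{free construction for full functors}), so $[\Th,\Coalg]$ is a reflective full subcategory. The reflection is \emph{regular-epi}-reflective because the unit component $\eta_\C\colon \C\to L\C$ is, by the description in Proposition \ref{free construction for full functors}, the quotient map onto a coequalizer $\C\to\C'=\faktor{C}{\langle I\rangle_\S}$ in $[\S,\Coalg]$, hence a regular epimorphism; equivalently, every object embeds into its reflection via a regular epi.

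The remaining, and genuinely characteristic, property is closure under quotients: if $D\in[\Th,\Coalg]$ and $q\colon D\twoheadrightarrow Q$ is a regular epimorphism in $[\S,\Coalg]$, then $Q$ lies in the replete image of $[\Th,\Coalg]$. The key point is that regular epimorphisms in these categories are computed at the level of underlying coalgebras as linear quotients by a $\S$-ideal that is also a coideal (this is exactly the coequalizer description of Section \ref{coequalizer construction}): $Q\cong\faktor{D}{J}$ with $q$ the canonical surjection. Now, because $R$ is surjective on operations, every operation $g\colon T^n\to T$ of $\Th$ is (the image of) an operation of $\S$, so $D$ already carries an interpretation $g^D$; I must check that $g^D$ descends along $q$ to a coalgebra map $g^Q\colon Q^{\otimes n}\to Q$, and that these descended operations still satisfy all the (linearized) axioms of $\Th$. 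Descent of $g^D$ follows since $q^{\otimes n}\colon D^{\otimes n}\to Q^{\otimes n}$ is an epimorphism of coalgebras (a tensor power of a surjection) and $J$ is a $\S$-ideal, so $g^D(D^{\otimes(i-1)}\otimes J\otimes D^{\otimes(n-i)})\subseteq\ker q$; this makes $Q$ a $\Th$-coalgebra. Validity of a $\Th$-axiom $t\approx s$ on $Q$ then follows by the same surjectivity-of-$q^{\otimes k}$ argument used in the proof of Proposition \ref{free construction for full functors}: $t^Q q^{\otimes k}=q\,t^D=q\,s^D=s^Q q^{\otimes k}$ and $q^{\otimes k}$ epi forces $t^Q=s^Q$. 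Hence $Q\in[\Th,\Coalg]$, and one checks routinely that $q\colon D\to Q$ is then a regular epimorphism in $[\Th,\Coalg]$ as well, since colimits there are computed by the same coideal-quotient recipe.

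Assembling these, $[\Th,\Coalg]$ is a regular-epi-reflective full subcategory of $[\S,\Coalg]$ closed under regular quotients, i.e.\ a Birkhoff subcategory. The main obstacle I anticipate is the careful bookkeeping in the closure-under-quotients step: one must be sure that the \emph{underlying} description of regular epimorphisms and of the operations interpreted on the quotient really are the naive linear-quotient ones (which is precisely what the coequalizer construction of Subsection \ref{coequalizer construction} and Proposition \ref{T ideal generated by a coideal} guarantee), and that no axiom of $\Th$ beyond those inherited from $\S$-terms needs separate attention — fullness of $R$ is exactly what rules this out. Everything else is a direct transcription of the classical argument that a surjective morphism of Lawvere theories yields a Birkhoff subvariety, with ``ideal'' replaced by ``$\S$-ideal that is a coideal'' throughout.
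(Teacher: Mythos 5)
Your proposal is correct and follows essentially the same route as the paper: fullness of $\_\circ R$ together with its left adjoint gives a full reflective subcategory, and closure under quotients is proved exactly as in the paper, using that regular epimorphisms are componentwise surjective (Subsection \ref{coequalizer construction}) and that fullness of $R$ makes every $\Th$-operation an $\S$-operation, so only the $\Th$-axioms need checking on the quotient. The only differences are cosmetic: your ``descent of operations'' step is redundant (the quotient $Q$ is already an $\S$-model, hence carries all the needed operations), and you establish regular-epi-reflectivity directly from the coequalizer description of the reflection in Proposition \ref{free construction for full functors}, whereas the paper instead notes that $\_\circ R$ reflects regular epimorphisms via a kernel-pair argument.
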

\begin{proof}
    Since $\_\circ R$ is fully faithful and has a right adjoint, $[\Th,\Coalg]$ is a full subcategory of $[\S,\Coalg]$. Let us check that it is closed under quotients. Let $\alpha\colon A\circ R \ra B$ be a regular epi with $A$ a coalgebraic model of $\Th$, and $B$ a coalgebraic model of $\S$; in order to show that $B$ is also a coalgebraic model for $\Th$, we should check that every axiom $t=s$ of $\Th$ holds in $B$. But the equalities
    $$s^B\circ \alpha_n=\alpha_1\circ s^A=\alpha_1\circ t^A=t^B\circ \alpha_n$$
    imply that $s^B=t^B$, since $\alpha$ is componentwise surjective (it follows from the description of coequalizers provided in \ref{coequalizer construction}).

    Finally, it is straightforward to notice that $\_\circ R$ reflects regular epimorphisms: take a regular epi $\alpha\colon A\circ R\ra B\circ R$ in $[\S,\Coalg]$; since $\_\circ R$ preserves limits, its kernel pair lies in $[\Th,\Coalg]$  and $\alpha$ is the coequalizer of its kernel pair also in $[\Th,\Coalg]$, the latter being a full subcategory of $[\S,\Coalg]$.
\end{proof}

As a consequence of this proposition we obtain that, in the diagram \ref{Chain of Hopf inclusions}, the functors 
$$\HRadRng\longrightarrow\HBr\quad\quad \HBr\longrightarrow\HDiGrp$$ 
are inclusions of Birkhoff subcategories, analogously to what happens for the classical algebraic functors $\RadRng\to\SKB$ and $\SKB\to\HDiGrp$.

\end{section}

\begin{section}{$\Omega$-Hopf algebras}\label{omega groups}

In light of the importance of the category $\Hopfc$, we want to study pointed categories of coalgebraic models that come with a forgetful functor to $\Hopfc$. In particular, we are going to show that these categories inherit some good properties ---semi-abelianness above all--- from cocommutative Hopf algebras.    
\\

Before proving the main result of this section, we provide a slightly different way of describing these categories, that will also somehow explain their relevance. In fact, looking at Table \ref{Table of examples}, one can notice that many examples fit in this framework: in particular, this is the case for the categories  $\Hopfc$, $\Hopfcc$, $\HBr$, $\HDiGrp$ and $\HRadRng$. As a matter of fact, all these categories are examples of categories of ``$\Omega$-Hopf algebras'' (Definition \ref{def omega hopf algebras}), i.e. coalgebraic models of an algebraic theory of $\Omega$-groups. 
It turns out that these two classes of categories coincide, namely a pointed category of coalgebraic models of an algebraic theory has a forgetful functor to $\Hopfc$ exactly when it is a category of $\Omega$-Hopf algebras. 





In order to prove the last sentence, let us spell out the definition of an $\Omega$-Hopf algebra. 
\\

We observe that being a Lawvere theory $\Th$ of a variety of $\Omega$-groups means that $\Th$ has operations $\cdot\colon T^2\to T$, $1\colon T^0\to T$, $ S\colon T\to T$ satisfying the group axioms, there are no other maps from $T^0$ to $T$ and, for any map $f\colon T^n\to T$ in $\Th$ the axiom $f\circ \Delta^{(n)}\circ 1=1$ holds.

    Hence, explicitly, an $\Omega$-Hopf algebra is a Hopf algebra $(H,\cdot^H,1^H,\Delta^H,\epsilon^H,S^H)$ endowed with a set of maps $F$ satisfying the following properties:
    \begin{itemize}
        \item an element $t\in F$ is a map $t\colon H^{\otimes n}\to H$ with $n$ positive integer,
        \item every map $t\colon H^{\otimes n}\to H$ in $F$ is a homomorphism of coalgebras,
        \item for every $n$-ary operation $t\in F$ the axiom $t(1^H\otimes \cdots \otimes 1^H)=1^H$ holds.
    \end{itemize}

    We can now prove the following proposition.

\begin{Prop}\label{Characterization of Omega-groups}
    A category of coalgebraic models of a Lawvere theory $\Th$ is a category of $\Omega$-Hopf algebras if and only if it is pointed and there exists an algebraic functor $[\Th,\Coalg]\to \Hopfc$.
\end{Prop}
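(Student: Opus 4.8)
The plan is to prove the two implications separately, working throughout with the identification $[\Th,\Coalg]\cong\{\text{$\F$-coalgebras satisfying $\Sigma$}\}$ established via Theorem \ref{Izq}.

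\textbf{($\Rightarrow$)} Suppose $[\Th,\Coalg]$ is a category of $\Omega$-Hopf algebras, so $\Th$ is the Lawvere theory of a variety $\mathcal V$ of $\Omega$-groups. First I would check pointedness: by the definition of $\Omega$-group, $1\colon T^0\to T$ is the unique morphism $T^0\to T$ in $\Th$, so on the classical side $\{u\}$ is both the initial and terminal model (the condition $f(u,\dots,u)=u$ guarantees $\{u\}$ is a subalgebra, hence a model). Applying the fully faithful $F\circ\_$ of Corollary \ref{F localization}, $\K=F(\{u\})$ is a zero object of $[\Th,\Coalg]$; alternatively one checks directly that the one-dimensional coalgebra $\K$, with every operation the canonical iso $\K^{\otimes n}\cong\K$, is both initial and terminal. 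For the algebraic functor: by hypothesis $\Th$ contains operations $\cdot,1,S$ satisfying the group axioms, i.e. there is a morphism of Lawvere theories $R\colon\Th_{\Grp}\to\Th$ picking them out. Precomposition with $R$ gives the algebraic functor $\_\circ R\colon[\Th,\Coalg]\to[\Th_{\Grp},\Coalg]\cong\Hopfc$, which is exactly the forgetful functor sending an $\Omega$-Hopf algebra to its underlying cocommutative Hopf algebra.

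\textbf{($\Leftarrow$)} Suppose $[\Th,\Coalg]$ is pointed and there is an algebraic functor $\Phi\colon[\Th,\Coalg]\to\Hopfc$. By definition an algebraic functor is induced by a morphism of Lawvere theories $R\colon\Th_{\Grp}\to\Th$; write $\cdot\coloneqq R(m)$, $1\coloneqq R(u)$, $S\coloneqq R(\text{antipode})$ for the images of the group operations. Since $R$ is a product-preserving functor, these satisfy the group axioms in $\Th$. It remains to check that (i) $1\colon T^0\to T$ is the \emph{unique} morphism $T^0\to T$ in $\Th$, and (ii) every operation $f\colon T^n\to T$ in $\Th$ satisfies $f\circ\Delta^{(n)}\circ 1=1$ — equivalently, reading $\Th$ as a one-object-generated theory, $f\circ\langle 1,\dots,1\rangle = 1$ where $\langle 1,\dots,1\rangle\colon T^0\to T^n$ is the $n$-fold diagonal of $1$. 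Both facts I would extract from pointedness. The zero object $\K$ of $[\Th,\Coalg]$ corresponds under the isomorphism $[\Th,\Coalg]\cong\{\F\text{-coalgebras}/\Sigma\}$ to an $\F$-coalgebra structure on the field $\K$ itself; its unit is necessarily $1_\K\colon\K^{\otimes 0}=\K\to\K$ and, because $\K$ is also \emph{terminal}, every parallel pair of operations agrees on it, forcing $f^\K\colon\K^{\otimes n}\to\K$ to be the canonical iso for every $f$. Translating back through the Yoneda-style correspondence between operations of the theory and natural operations on models, evaluating at the zero object gives precisely $f\circ\langle 1,\dots,1\rangle=1$ in $\Th$, which is (ii); and the initiality of $\K$ (equivalently: $\K[\emptyset]$ with its unique grouplike $1$ is the free $\Th$-coalgebra on the empty set, using that $U$ is monadic with left adjoint as in Section \ref{section free functor}) forces the hom-set $\Th(T^0,T)$ to be a singleton, which is (i). Hence $\Th$ is the Lawvere theory of a variety of $\Omega$-groups and $[\Th,\Coalg]$ is a category of $\Omega$-Hopf algebras.

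\textbf{Main obstacle.} The delicate point is the ($\Leftarrow$) direction, specifically deducing the \emph{theory-level} conditions (i) and (ii) from the mere existence of a zero object in $[\Th,\Coalg]$. One must be careful that operations of $\Th$ correspond bijectively to natural families of coalgebra maps on $\Th$-coalgebras, and that evaluating such a natural family at the zero object $\K$ recovers the composite with the diagonal of $1$ inside $\Th$ itself (not just for one chosen model). I would handle this by invoking the isomorphism of categories between $[\Th,\Coalg]$ and $\F$-coalgebras satisfying $\Sigma$, together with the fact that $\Th(T^n,T^k)$ is computed as $\F$-terms modulo $\Sigma$, so that an equation like $f\circ\langle 1,\dots,1\rangle=1$ holds in $\Th$ as soon as it holds in one faithful model — and $\K$, being a zero object hence in particular a nontrivial model with the generic behaviour forced by terminality, is such a model. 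A secondary subtlety is that ``algebraic functor'' must be read as ``induced by a morphism of theories $R$'' rather than merely ``a functor admitting the expected adjoints''; with that reading in force (as in the Definition in Section \ref{section free functor}) the argument above goes through cleanly.
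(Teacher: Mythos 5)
Your ($\Rightarrow$) direction is fine and matches the paper in substance (you route pointedness through $[\Th,\Set]$ and the fully faithful, limit- and colimit-preserving $F\circ\_$ of Corollary \ref{F localization}, while the paper checks directly that $u$ induces the unique natural transformation $\K\Rightarrow \mathbf{H}$; both work). The gap is in ($\Leftarrow$), at exactly the step you flag as delicate. Knowing that the zero object is $\K$ with every operation the canonical isomorphism tells you nothing about $\Th$: $\K$ is the \emph{terminal} model, in which every equation between parallel closed terms holds vacuously, so it is the opposite of a ``faithful'' model, and the claim that an identity such as $f\circ\langle 1,\dots,1\rangle=1$ holds in $\Th$ as soon as it holds in $\K$ is false. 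Concretely, for the theory of groups with one extra constant $c$ and no axioms about it, $\K$ with its canonical structure is a model in which $c^\K=1^\K$ and $f^\K(1\otimes\cdots\otimes 1)=1$ for every $f$, yet this is not a theory of $\Omega$-groups (and indeed its category of coalgebraic models is not pointed: there is no morphism from $\K$ to $\K[\Z]$ with $c$ interpreted as a nontrivial grouplike). So evaluating a natural family at the zero object cannot yield condition (ii); pointedness has to be used as a statement about \emph{all} models, which is what the paper does: for each $\mathbf{H}$ the unique morphism $\K\to\mathbf{H}$, being natural with respect to the unit, must be $u^H$, and naturality with respect to an arbitrary $f\colon T^n\to T$ gives $f^H\circ (u^H)^{\otimes n}=u^H$ in every model; transferring this to $\Th$ itself is then done on the models $\K[M]$ coming from set models (via grouplikes, or the fully faithful $F\circ\_$), not on $\K$.

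Your argument for (i) contains the right idea but is garbled as written, and once repaired it actually makes the problematic step unnecessary. Note that $\K[\emptyset]$ is the zero coalgebra, not $\K$; what you want is that the initial object of $[\Th,\Coalg]$ is the image under the colimit-preserving $F\circ\_$ of the initial set model, i.e.\ $\K[\Th(T^0,T)]$, whose dimension is the number of constants of $\Th$. Pointedness forces this to coincide with the terminal object $\K$, hence $\Th(T^0,T)$ is a singleton, which is (i). Condition (ii) is then automatic, with no appeal to the zero object at all: $f\circ\Delta^{(n)}\circ 1$ is itself a morphism $T^0\to T$, so it equals $1$ by uniqueness. With that repair your ($\Leftarrow$) becomes a correct (and arguably cleaner) alternative to the paper's naturality argument; as it stands, the justification you give for (ii) does not work.
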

\begin{proof}
  If $[\Th,\Coalg]$ is a category of $\Omega$-Hopf algebras, it trivially has a forgetful functor to $\Hopfc$. Hence, we just have to check that it is pointed. Let us call $u\colon T^0\to T$ the neutral element of $\Th$. For any $\Omega$-Hopf algebra $H$, $u$ induces a natural transformation $u^H\colon \K\Rightarrow H$. Indeed, the naturality follows from the fact that, for any $f\colon T^n\to T$,
  $$f^H\circ (u^H)^{\otimes n}=f^H\circ\Delta^H\circ u^H=u^H.$$
  Moreover, the natural transformation $u^H$ is the unique possible one: given another $\alpha\colon \K\Rightarrow H$, the naturality with respect to $u$ ensures that $u^H=\alpha$.

  For the other implication, we start from a pointed category $[\Th,\Coalg]$ with a forgetful functor to $\Hopfc$; $\Th$ is endowed with group structure coming from the algebraic functor. Furthermore, the unique morphism from $\K$ to a model $H$ must coincide with the unit, since it has to be natural with respect to the unit operation. Hence, the unit is a natural transformation, and the naturality condition translates into the $\Omega$-Hopf algebra requirement on the operations, as in the argument above. 
\end{proof}

\begin{subsection}{Semi-abelianness of $\Omega$-Hopf algebras}

Let us introduce some notation and recall some results about the category $\Hopfc$. Given a map $f\colon A\ra B$ in $\Hopfc$, we denote by $\Hker{f}$ its kernel in $\Hopfc$ and by $\ker {f}$ the usual kernel as a map of vector spaces. It is shown in \cite{GranSterckVercruysse} that the category $\Hopfc$ has a regular epi-mono factorization, where the image of the factorization is given by 
$$\frac{A}{A(\Hker{f})^+}=\frac{A}{\ker{f}}\cong f[A].$$
This follows from the equality $A(\Hker{f})^+=\ker{f}$ that is a consequence of the Newman's correspondence \cite{Newman}.

\begin{Prop}\label{factorization}
    Let $[\Th,\Coalg]$ be a category of $\Omega$-Hopf algebras. Then any morphism in $[\Th,\Coalg]$ factors through a regular epimorphism followed by a monomorphism.
\end{Prop}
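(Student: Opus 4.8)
The plan is to lift the regular epimorphism--monomorphism factorization of $\Hopfc$ recalled just above along the forgetful (algebraic) functor $U\colon[\Th,\Coalg]\to\Hopfc$, whose existence is guaranteed by Proposition~\ref{Characterization of Omega-groups}. Let $f\colon A\to B$ be a morphism in $[\Th,\Coalg]$. First I would check that the linear image $f[A]\subseteq B$ is a sub-$\Th$-coalgebra of $B$: it is a subcoalgebra because $f$ is a coalgebra morphism, and it is closed under every operation $t\colon T^n\to T$ of $\Th$, since $t^B(f(a_1)\otimes\cdots\otimes f(a_n))=f\big(t^A(a_1\otimes\cdots\otimes a_n)\big)\in f[A]$ and $f[A]^{\otimes n}$ is spanned by such pure tensors. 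Thus $f$ factors in $[\Th,\Coalg]$ as $A\xrightarrow{e}f[A]\xrightarrow{m}B$, where $e$ is the corestriction of $f$ and $m$ is the inclusion; and $m$ is a monomorphism in $[\Th,\Coalg]$ because it is injective as a linear map.

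It remains to show that $e$ is a regular epimorphism in $[\Th,\Coalg]$, which is the only substantial point. Let $\pi_1,\pi_2\colon R\rightrightarrows A$ be the kernel pair of $e$ in $[\Th,\Coalg]$; since limits in $[\Th,\Coalg]$, in $\Hopfc$ and in $\Coalg$ are all computed at the level of $\Coalg$, this kernel pair has the same underlying data as the kernel pair of $Ue\colon A\to f[A]$ in $\Hopfc$. I would then prove that $e$ is the coequalizer of $\pi_1,\pi_2$ in $[\Th,\Coalg]$. By the explicit construction of coequalizers (Section~\ref{coequalizer construction}), this coequalizer is the quotient $A/\langle J\rangle_{\Th}$ by the $\Th$-ideal generated by the coideal $J\coloneqq\{\pi_1(r)-\pi_2(r)\mid r\in R\}$, so it suffices to identify $\langle J\rangle_{\Th}$ with the vector-space kernel $\ker f$ (which equals $\ker e$, as $m$ is injective). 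The inclusion $\langle J\rangle_{\Th}\subseteq\ker f$ is immediate: $J\subseteq\ker e$ because $e\pi_1=e\pi_2$, and $\ker e$ is a $\Th$-ideal by Remark~\ref{ker ideal}. For the reverse inclusion, observe that in $\Hopfc$ the map $Ue$ is the regular-epi part of the factorization recalled above, hence the coequalizer of its kernel pair; by the description of coequalizers in $\Hopfc$ this forces the Hopf ideal $\langle J\rangle_{\Th_\Grp}$ generated by $J$ to equal $\ker f$. Since the group operations of $\Hopfc$ are among the operations of $\Th$, every $\Th$-ideal is in particular a Hopf ideal, so $\ker f=\langle J\rangle_{\Th_\Grp}\subseteq\langle J\rangle_{\Th}$. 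Hence $\langle J\rangle_{\Th}=\ker f$, the coequalizer of $\pi_1,\pi_2$ is $A/\ker f\cong f[A]$, and the coequalizer map is exactly $e$; therefore $e$ is a regular epimorphism and $f=m\circ e$ is the required factorization.

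The main obstacle is precisely this identification $\langle J\rangle_{\Th}=\ker f$: a priori the $\Th$-ideal generated by $J$ could be strictly larger than the Hopf ideal it generates, and the key observation making the argument work is that it is squeezed between that Hopf ideal and $\ker f$, two subspaces that already coincide thanks to the good factorization properties of $\Hopfc$ (ultimately, via Proposition~\ref{T ideal generated by a coideal}, to the fact that both descriptions produce a quotient by a $\Th$-ideal which is also a coideal). The remaining ingredients---that $f[A]$ inherits a $\Th$-coalgebra structure, that $m$ is a monomorphism, and that kernel pairs are computed as in $\Coalg$---are routine.
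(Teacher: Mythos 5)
Your argument is correct, but it follows a genuinely different route from the paper's. The paper factors $f$ through the cokernel of its kernel: it computes $\Hker f$ as in $\Hopfc$, observes via Remark \ref{Cokernel} that this cokernel is $A/\langle \Hker f^{+}\rangle_{\Th}$, and identifies $\langle \Hker f^{+}\rangle_{\Th}=\ker f$ by combining Remark \ref{ker ideal} (which gives $\langle \Hker f^{+}\rangle_{\Th}\subseteq\ker f$) with Newman's correspondence $A\,\Hker f^{+}=\ker f$ (which gives the reverse inclusion); the epimorphic part is then regular for free, being a cokernel. You instead factor through the image $f[A]$ and prove that the corestriction $e$ is the coequalizer of its kernel pair, importing from $\Hopfc$ the fact that $Ue$ is the regular-epi part of the recalled factorization there, and then squeezing $\langle J\rangle_{\Th_{\Grp}}\subseteq\langle J\rangle_{\Th}\subseteq\ker f=\langle J\rangle_{\Th_{\Grp}}$, where the first inclusion uses that the group operations belong to $\Th$ and the last equality repackages the same exactness input on $\Hopfc$ (ultimately Newman's correspondence, through the description of the image as $A/\ker f$). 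Both proofs hinge on Remark \ref{ker ideal} and on the recalled factorization in $\Hopfc$; the paper's version is shorter, since a cokernel is automatically a regular epimorphism and no kernel-pair bookkeeping is needed, and its key identity $\langle \Hker f^{+}\rangle_{\Th}=\ker f$ is reused immediately afterwards in the correspondence between kernels and $\Th$-ideals, whereas your version trades explicit mention of $\Hker f$ for the regular-category structure of $\Hopfc$ and the explicit coequalizer construction, at the cost of verifying that $e$ agrees (up to the canonical isomorphism $A/\ker f\cong f[A]$) with the coequalizer map.
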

\begin{proof}
    
    Take a map $f\colon A\ra B$ in $[\Th,\Coalg]$ and let $H$ be the kernel of $f$ in $[\Th,\Coalg]$. Since the forgetful functor to $\Hopfc$ is a right adjoint, $H$ is computed as in $\Hopfc$, i.e.
    $$H=\Hker{f}=\{a\in A\mid f(a_1)\otimes a_2=1_B\otimes a\}.$$
    The map $f$ factors through the cokernel of $H$ that is, by Remark \ref{Cokernel}, $\frac{A}{\langle \Hker{f}^+\rangle_{\Th}}$. Let us prove that $\langle \Hker{f}^+\rangle_{\Th}=\ker{f}$. Since $\ker f$ is a $\Th$-ideal (see Remark \ref{ker ideal}) and $\Hker{f}^+\subseteq \ker f$, we get the inclusion $\langle \Hker{f}^+\rangle_\Th\subseteq \ker f$. On the other hand, it is clear that $\langle \Hker f^+\rangle_\Th\supseteq A \Hker f^+=\ker f$. Now, as in the category $\Hopfc$, we can factor $f$ through the cokernel of its kernel
    as in the diagram \begin{figure}[H]
        \centering
    \begin{tikzpicture}
        \node(1) at (-1,0) {$A$};
        \node(2) at (3,0) {$B$};
        \node(3) at (1,-1) {$\frac{A}{\langle \Hker f^+\rangle_\Th}$};

        \draw[->] (1) to node[above]{\scriptsize{$f$}} (2);
        \draw[->] (1) to[in=170,out=-50] node[below] {\scriptsize{$e$}} (3);
        \draw[->] (3) to[in=230,out=10] node[below] {\scriptsize{$m$}} (2);
    \end{tikzpicture}    
    \end{figure}
    \noindent where the map $e$ is trivially a regular epi and the map $m$ is a monomorphism (it is, in fact, injective since $\frac{A}{\langle \Hker f^+\rangle_\Th}=\frac{A}{\ker f}=f[A]$).
    
\end{proof}

\begin{Prop}\label{reg epi=surj}
    Let $[\Th,\Coalg]$ be a category of $\Omega$-Hopf algebras. In $[\Th,\Coalg]$ regular epimorphisms coincide with surjective maps.
\end{Prop}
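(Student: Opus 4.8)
The plan is to prove a double inclusion: every regular epimorphism is surjective, and every surjective morphism of $\Omega$-Hopf algebras is a regular epimorphism. The first inclusion is almost immediate from the construction of coequalizers in Subsection~\ref{coequalizer construction}: a regular epimorphism is a coequalizer of some pair of parallel arrows, hence (by the description there) it is of the form $q\colon B\ra \faktor{B}{\langle I\rangle_\Th}$, a \emph{linear} quotient map, which is certainly surjective as a map of underlying vector spaces and so as a map of coalgebras. So the real content is the converse.

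For the converse, I would take a surjective morphism $f\colon A\ra B$ in $[\Th,\Coalg]$ and apply the factorization of Proposition~\ref{factorization}: write $f=m\circ e$ with $e$ a regular epimorphism and $m$ a monomorphism. Since $f$ is surjective, $m$ is surjective as well. Now I want to conclude that $m$ is an isomorphism, which then forces $f\cong e$ to be a regular epimorphism. From the proof of Proposition~\ref{factorization} we know $m$ is injective (it identifies $\faktor{A}{\ker f}$ with $f[A]\subseteq B$), so $m$ is a bijective morphism of $\Th$-coalgebras; its inverse is automatically linear and a coalgebra homomorphism, and since the operations of $\Th$ are carried along under $m^{-1}$ as well (because $m$ is a morphism in $[\Th,\Coalg]$ and bijective), $m^{-1}$ lies in $[\Th,\Coalg]$. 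Hence $m$ is an isomorphism and $f$ is a regular epimorphism.

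The step I expect to require the most care is the claim that a morphism in $[\Th,\Coalg]$ that is bijective on underlying coalgebras is an isomorphism in $[\Th,\Coalg]$ --- i.e. that $[\Th,\Coalg]$ ``reflects isomorphisms along the forgetful functor to $\Coalg$''. This is true because the forgetful functor $[\Th,\Coalg]\to\Coalg$ is monadic (as recalled in Section~\ref{section free functor}), and monadic functors reflect isomorphisms; alternatively one checks directly that the set-theoretic inverse $m^{-1}$ commutes with every operation $t\colon T^n\ra T$, using that $m$ does and that $m^{\otimes n}$ is invertible. Either way this is the one point where one genuinely uses structure of the category rather than pure formalities. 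Once that is in hand, the argument closes: surjective $\Rightarrow$ (regular epi)$\circ$(mono) with the mono surjective hence injective hence iso, so surjective $\Rightarrow$ regular epi, and the easy direction gives the reverse inclusion.
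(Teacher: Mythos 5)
Your proposal is correct and follows essentially the same route as the paper: the easy direction via the explicit coequalizer construction of Subsection~\ref{coequalizer construction}, and the converse by factoring a surjection through the regular epi--mono factorization of Proposition~\ref{factorization} and noting that the mono part is bijective, hence an isomorphism. Your extra care in justifying that a bijective morphism of $\Th$-coalgebras is invertible in $[\Th,\Coalg]$ (via monadicity or a direct check on operations) is a detail the paper leaves implicit, but it is the same argument.
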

\begin{proof}
    Using the description of coequalizers provided in \ref{coequalizer construction}, we know that every regular epimorphism is surjective. 

    On the other hand, taken a surjective map $f\colon A\to B$ in $[\Th,\Coalg]$, its regular epi-mono factorization coincides with the usual factorization in $\K$-vector spaces. Hence, the monomorphic part of the factorization is an isomorphism and $f$ is a regular epimorphism.
\end{proof}

\begin{Prop}
    Let $[\Th,\Coalg]$ be a category of $\Omega$-Hopf algebras and let $A$ be a coalgebraic model of $\Th$. There is a one-to-one correspondence between 
    $$I\coloneqq\{B\subset A\mid B \text{ is a sub-}\Th\text{-coalgebra of }A \text{ s.t. } \langle B^+\rangle_\Th= AB^+\}$$
    and
    $$J\coloneqq\{\text{ normal monomorphism }B\hookrightarrow A\}.$$
\end{Prop}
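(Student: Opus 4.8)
The plan is to write down the two maps realizing the correspondence and verify they are mutually inverse. To a normal monomorphism $k\colon N\hookrightarrow A$ —that is, $k=\ker f$ for some $f\colon A\to C$ in $[\Th,\Coalg]$— I associate the subspace $N\subseteq A$. Since the forgetful functor $[\Th,\Coalg]\to\Hopfc$ is a right adjoint it preserves kernels, so $N$ is computed as $\Hker f$ and is in particular a sub-$\Th$-coalgebra of $A$; moreover the computation carried out in the proof of Proposition \ref{factorization} shows $\langle N^+\rangle_\Th=\ker f=AN^+$, so that $N$ lies in $I$. In the opposite direction, to $B\in I$ I associate the inclusion $B\hookrightarrow A$. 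These two assignments are visibly inverse to one another once both are well defined: starting from $B\in I$, passing to $B\hookrightarrow A$ and back to its domain returns $B$; starting from a normal monomorphism and passing to its image and back returns the same subobject, a normal monomorphism being determined by its image. Hence the whole content is to prove that for $B\in I$ the inclusion $B\hookrightarrow A$ is a normal monomorphism.

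To do this I would consider the map $q\colon A\to A/\langle B^+\rangle_\Th$. First, $B^+=\ker(\epsilon_A|_B)$ is a coideal of $A$ (because $1_A\in B$ is grouplike), so by Proposition \ref{T ideal generated by a coideal} the quotient $A/\langle B^+\rangle_\Th$ is a $\Th$-coalgebra and, by Remark \ref{Cokernel}, $q$ is precisely the cokernel of $B\hookrightarrow A$ in $[\Th,\Coalg]$. A one-line Sweedler-notation computation gives the inclusion $B\subseteq\Hker q$: for $b\in B$ one has $\Delta_A(b)-1_A\otimes b\in B^+\otimes B$, which $q\otimes\id$ annihilates because $q$ kills $B^+\subseteq\langle B^+\rangle_\Th$. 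For the reverse inclusion I would exploit the hypothesis $\langle B^+\rangle_\Th=AB^+$: since the group operation $\cdot$ and the antipode $S$ are among the operations of $\Th$, the $\Th$-ideal $\langle B^+\rangle_\Th$ is a two-sided ideal stable under $S$, so $AB^+$ is a Hopf ideal of the underlying cocommutative Hopf algebra $A$; using that $S$ is bijective this forces $B$ to be a normal sub-Hopf-algebra of $A$. Newman's correspondence \cite{Newman} (see also \cite{GranSterckVercruysse}) then identifies $\Hker q$ with $B$, so $B\hookrightarrow A=\ker q$ is a normal monomorphism and the assignment $B\mapsto(B\hookrightarrow A)$ is well defined.

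The step I expect to be the real obstacle is this last inclusion $\Hker q\subseteq B$: it is exactly where one must pass from the formal categorical machinery (cokernels, kernels, the factorization of Proposition \ref{factorization}) to genuine structure theory of cocommutative Hopf algebras over a field, namely the Newman correspondence and the faithful flatness underlying it. In particular one has to argue carefully that the hypothesis defining $I$, which only equates the $\Th$-ideal $\langle B^+\rangle_\Th$ with the ordinary left ideal $AB^+$, is strong enough to guarantee that $AB^+$ is a Hopf ideal and that $B$ is normal in $A$; everything else —well-definedness on the side of $J$ and the fact that the two assignments compose to identities— is then routine.
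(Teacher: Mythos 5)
Your proposal is correct and follows essentially the same route as the paper: both directions use the quotient $A\to A/\langle B^+\rangle_\Th=A/AB^+$ together with Newman's correspondence to identify $B$ with the (Hopf) kernel, and the converse direction repeats the computation $\langle \Hker f^+\rangle_\Th=\ker f=A\Hker f^+$ from Proposition \ref{factorization}. Your extra detour through normality of $B$ (via $S$-stability of $AB^+$) is a harmless elaboration of what the paper compresses into the phrase ``due to the Newman correspondence''.
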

\begin{proof}
    Let $B\in I$, we can consider in $[\Th,\Coalg]$ the map $\pi\colon A\ra \faktor{A}{\langle B^+\rangle_\Th} $. Since $\langle B^+\rangle_\Th= AB^+$, due to the Newman correspondence, we know that $B$ is the kernel of $\pi$ in $\Hopfc$ and then also in $[\Th,\Coalg]$. Conversely, when $B=\Hker f $ is the kernel $B\hookrightarrow A $ of a morphism $ A\xrightarrow{f} A'$, then, by using the same argument of Proposition \ref{factorization}, we get that $\langle \Hker^+\rangle_\Th=\ker f=A\Hker f^+$.
\end{proof}

\begin{Prop}\label{image of mono is mono}
    Let $[\Th,\Coalg]$ be a category of $\Omega$-Hopf algebras. Then, the direct image of a kernel in $[\Th,\Coalg]$ under a regular epimorphism is a kernel in $[\Th,\Coalg]$.
\end{Prop}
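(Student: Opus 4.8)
The plan is to identify the direct image concretely as a subobject of $B$ and then verify the criterion from the previous proposition, namely the correspondence between normal monomorphisms $B'\hookrightarrow A$ and sub-$\Th$-coalgebras $B'$ of $A$ satisfying $\langle (B')^+\rangle_\Th= AB'^+$.

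First I would pin down the direct image. Given a kernel $k\colon K\hookrightarrow A$ and a regular epimorphism $f\colon A\to B$ in $[\Th,\Coalg]$, its direct image is the monomorphic part of the (regular epimorphism, monomorphism)-factorization of $f\circ k$; by Proposition \ref{factorization} together with Proposition \ref{reg epi=surj}, this factorization is computed at the level of underlying vector spaces, so the direct image is the inclusion $K'\coloneqq f[K]\hookrightarrow B$, where $f[K]$ carries the $\Th$-coalgebra structure induced from $B$. A short routine check shows that $K'$ is a sub-$\Th$-coalgebra of $B$ (it is a subcoalgebra as the image of a subcoalgebra under a coalgebra map, and it is closed under every operation $t$ of $\Th$ because $t^B(f(k_1)\otimes\cdots\otimes f(k_n))=f(t^A(k_1\otimes\cdots\otimes k_n))$), and that $f[K^+]=(K')^+$, since $\epsilon_B(f(k))=\epsilon_A(k)$. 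By the previous proposition, it then suffices to establish the ideal identity $\langle (K')^+\rangle_\Th= B(K')^+$.

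The key step, where the hypotheses genuinely enter, is the following. Because $K$ is itself a kernel in $[\Th,\Coalg]$, the previous proposition applied to $K\subset A$ yields $\langle K^+\rangle_\Th= AK^+$; write $J$ for this common subspace, which is therefore simultaneously the Hopf ideal and the $\Th$-ideal-and-coideal of $A$ generated by $K^+$. I would then compute $f[J]$ in two ways: using only that $f$ is surjective and multiplicative one gets $f[J]=f[AK^+]=B(K')^+$; on the other hand $f[J]$ is again a coideal of $B$ (image of a coideal under a coalgebra map) and a $\Th$-ideal of $B$, since $f$ is surjective and so any tuple in $B$ lifts to a tuple in $A$ having an entry in the $\Th$-ideal $J$, and $f[J]\supseteq f[K^+]=(K')^+$, whence $\langle (K')^+\rangle_\Th\subseteq f[J]$. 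Conversely $B(K')^+=\langle (K')^+\rangle_{\Th_\Grp}\subseteq \langle (K')^+\rangle_\Th$ holds trivially, the group operations being among those of $\Th$. Chaining $\langle (K')^+\rangle_\Th\subseteq f[J]=B(K')^+\subseteq \langle (K')^+\rangle_\Th$ gives the desired identity, and the previous proposition then identifies $K'=f[K]\hookrightarrow B$ as a normal monomorphism, i.e. a kernel in $[\Th,\Coalg]$.

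I expect the only real subtlety to be the bookkeeping with the two ideal closures: making precise that the equality $\langle K^+\rangle_\Th=AK^+$ (available exactly because $K$ is a kernel) is what allows $f[J]$ to be recognized simultaneously as the Hopf ideal $B(K')^+$ and as the $\Th$-ideal $\langle (K')^+\rangle_\Th$, and checking that the image of a $\Th$-ideal of $A$ under the surjective $\Th$-morphism $f$ is again a $\Th$-ideal of $B$. Everything else is routine. An alternative route — form the pushout of $f$ along the cokernel $A\to A/\langle K^+\rangle_\Th$ of $K$ and recognize $f(K)$ as the kernel of the pushout leg — would also work, but it seems to need more categorical input (and protomodularity) than the direct ideal-theoretic argument above, so I would favour the latter.
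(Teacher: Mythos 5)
Your argument is correct and follows essentially the same route as the paper's proof: both reduce, via the correspondence of the preceding proposition, to the identity $\langle f[K]^+\rangle_\Th = B\,f[K]^+$, and both derive it from $\langle K^+\rangle_\Th = AK^+$ (the kernel hypothesis) together with surjectivity of $f$ and naturality to push $\Th$-terms through $f$. Your explicit remark that $f[AK^+]$ is itself a $\Th$-ideal of $B$ containing $f[K]^+$ is a nice refinement, since it disposes of all stages of the recursive generation of $\langle f[K]^+\rangle_\Th$ at once, a point the paper's single-term computation leaves implicit.
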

\begin{proof}
    Let $D\hookrightarrow A$ be a normal subobject of $A$ and let $\rho\colon A\to B$ be a surjective map. Thanks to the previous proposition, it is enough to check that $\langle \rho[D]^+\rangle_\Th\subseteq B\, \rho[D]^+$. Take an $n$-ary term $t\colon T^n\to T$, an element $d\in D$ and $n-1$ elements $b_1,\dots,b_{n-1}\in B$. The surjectivity of $\rho$ implies that  there are elements $a_1,\dots,a_{n-1}\in A$ such that $\rho(a_i)=b_i$ for every $i=1,\dots,n-1$. Let us compute the term $t$ applied to the chosen elements:
    $$t^B(\rho(d)\otimes b_1\otimes\cdots\otimes b_{n-1})=t^B(\rho(d)\otimes \rho(a_1)\otimes\cdots\otimes \rho(a_{n-1}))=\rho(t^A(d\otimes a_1\otimes\cdots\otimes a_{n-1})).$$
    Since $D$ is a kernel, we get that $\langle D^+\rangle=AD^+$, thus $t^A(d\otimes a_1\otimes\cdots\otimes a_{n-1})\in AD^+$ and $\rho(t^A(d\otimes a_1\otimes\cdots\otimes a_{n-1}))\in \rho[AD^+]\subseteq B\,\rho[D]^+$.
\end{proof}

\begin{Prop}\label{category with forgetful homological}
    Let $[\Th,\Coalg]$ be a category of $\Omega$-Hopf algebras. Then $[\Th,\Coalg]$ is a protomodular category.
\end{Prop}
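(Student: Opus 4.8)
The plan is to reduce protomodularity of $[\Th,\Coalg]$ to that of $\Hopfc$, using the characterization of protomodularity via linearized terms established in Proposition~\ref{Protomodularity mal'tsev condition}. Since $\Hopfc=[\Th_\Grp,\Coalg]$ is semi-abelian by \cite{GranSterckVercruysse}, it is in particular protomodular, so by Theorem~\ref{Protomodularity} (or directly by Corollary~\ref{linearized protomod}) there exist a positive integer $n$, binary linearized terms $\alpha^1,\dots,\alpha^n$ and an $(n+1)$-ary linearized term $\beta$ witnessing protomodularity for cocommutative Hopf algebras, i.e. satisfying the equations (\ref{alphai}) and (\ref{beta}) for every $H$ in $\Hopfc$. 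In the classical case these can be taken to be $\alpha^1(x,y)=x\cdot y^{-1}$ and $\beta(z,y)=z\cdot y$, coming from the protomodularity of $\Grp$.

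The key point is that, since $[\Th,\Coalg]$ is a category of $\Omega$-Hopf algebras, by Proposition~\ref{Characterization of Omega-groups} there is an algebraic functor $[\Th,\Coalg]\to\Hopfc$, induced by a morphism of theories $\Th_\Grp\to\Th$ that picks out the underlying group operations $\cdot$, $1$, $S$ inside $\Th$. First I would pull back the terms $\alpha^i$ and $\beta$ along this morphism: since $\alpha^i$ and $\beta$ are built from the group operations $\cdot$, $1$, $(-)^{-1}$, their images are genuine terms of $\Th$, hence induce coalgebra homomorphisms $\alpha^i_H\colon H\otimes H\to H$ and $\beta_H\colon H^{\otimes n+1}\to H$ for every $\mathbf H$ in $[\Th,\Coalg]$, natural in $\mathbf H$. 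Because the forgetful functor to $\Hopfc$ preserves the underlying coalgebra, the relevant composites $\Delta$, $\epsilon$, $0_{HH}$ all agree with those computed in the underlying Hopf algebra; hence the equations (\ref{alphai}) and (\ref{beta}), which hold for the underlying Hopf algebra of $\mathbf H$, hold verbatim for $\mathbf H$ in $[\Th,\Coalg]$. Finally I would invoke Proposition~\ref{Protomodularity mal'tsev condition}: having exhibited linearized terms $\alpha^i$ and $\beta$ satisfying (\ref{alphai}) and (\ref{beta}) for every object of $[\Th,\Coalg]$, we conclude that $[\Th,\Coalg]$ is protomodular.

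The main thing to check carefully — and the only real obstacle — is the bookkeeping that the terms obtained from $\Hopfc$ transport correctly: namely that the equations (\ref{alphai}) and (\ref{beta}) are statements purely about the underlying cocommutative coalgebra structure together with the group operations, so that their validity in the underlying Hopf algebra of $\mathbf H$ is literally the same assertion as their validity in $\mathbf H$ as a $\Th$-coalgebra. This is immediate once one notes that the algebraic functor $[\Th,\Coalg]\to\Hopfc$ is the identity on underlying coalgebras and commutes with the interpretation of the group operations, so $\alpha^i_H$, $\beta_H$, $\Delta_H$, $\epsilon_H$ and $0_{HH}$ are the same maps whether read in $[\Th,\Coalg]$ or in $\Hopfc$. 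No new calculation beyond this observation is needed, since the hard analytic content (the Split Short Five Lemma argument) is entirely contained in Proposition~\ref{Protomodularity mal'tsev condition}.
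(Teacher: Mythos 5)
Your proposal is correct and follows essentially the same route as the paper: both transport the group-theoretic linearized terms witnessing protomodularity of $\Hopfc$ along the theory morphism $\Th_\Grp\to\Th$ provided by Proposition~\ref{Characterization of Omega-groups}, and then invoke the Mal'tsev-type characterization of Proposition~\ref{Protomodularity mal'tsev condition}. Your version merely spells out more explicitly (choice of terms, invariance of the underlying coalgebra data) what the paper states in compressed form.
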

\begin{proof}
    Let us recall that Proposition \ref{Protomodularity mal'tsev condition} characterizes protomodularity as a Mal'tsev condition, i.e. $[\Th,\Coalg]$ is protomodular if there suitable terms satisfying certain equations. The category $[\Th,\Coalg]$ comes with a forgetful functor to $\Hopfc$, namely there is a functor of Lawvere theories $F\colon\Grp\to \Th$. But there are terms in $\Grp$ which ensure the protomodularity of $\Hopfc$, hence their image under $F$ satisfy the same commutative diagrams in $\Th$ and guarantee the protomodularity of $[\Th,\Coalg]$.
\end{proof}

\begin{Prop}
    Let $[\Th,\Coalg]$ be a category of $\Omega$-Hopf algebras. Then $[\Th,\Coalg]$ is regular.
\end{Prop}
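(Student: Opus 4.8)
The plan is to establish regularity by verifying its three standard ingredients: finite completeness, existence of coequalizers of kernel pairs, and stability of regular epimorphisms under pullback. The first two are already available: $[\Th,\Coalg]$ has all finite limits, computed as in $\Coalg$ since the forgetful functor $[\Th,\Coalg]\to\Coalg$ is monadic, and it has all coequalizers by the construction in Section~\ref{Section Colimts}, in particular coequalizers of kernel pairs. Moreover, Proposition~\ref{factorization} already provides a (regular epimorphism, monomorphism) factorization of every morphism, and Proposition~\ref{reg epi=surj} identifies regular epimorphisms with surjective maps; so the only point left to prove is that the pullback of a surjection is again a surjection.

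To do this, I would transport the corresponding property from $\Hopfc$, which is known to be regular \cite{GranSterckVercruysse}, along the algebraic functor $U\colon [\Th,\Coalg]\to\Hopfc$ induced by the morphism of theories $\Th_\Grp\to\Th$ (this morphism exists because $\Th$ is the theory of a variety of $\Omega$-groups, and it was already used in Proposition~\ref{category with forgetful homological}). The features of $U$ that I would rely on are that it is a right adjoint --- indeed monadic --- by the results of Section~\ref{section free functor}, hence preserves all limits, and that it leaves the underlying linear map of every morphism unchanged, so it preserves and reflects surjections.

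Concretely, let $f\colon A\to B$ be a regular epimorphism in $[\Th,\Coalg]$ and $g\colon C\to B$ an arbitrary morphism; form the pullback $P\coloneqq A\times_B C$ with projection $p\colon P\to C$. Applying $U$ yields a pullback square in $\Hopfc$ in which $Up$ is the pullback of the regular epimorphism $Uf$ along $Ug$. Since $\Hopfc$ is regular, $Up$ is a regular epimorphism in $\Hopfc$, hence surjective; as $p$ has the same underlying linear map as $Up$, the morphism $p$ is surjective, and therefore a regular epimorphism in $[\Th,\Coalg]$ by Proposition~\ref{reg epi=surj}. This shows that regular epimorphisms are pullback-stable, which together with the existence of finite limits and of coequalizers of kernel pairs gives regularity of $[\Th,\Coalg]$.

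I do not expect a serious obstacle: the argument is a formal transfer along a limit-preserving functor, and the only external input is the already-cited regularity of $\Hopfc$. The one point to be careful about is the coincidence ``regular epimorphism $=$ surjection'', which holds in $[\Th,\Coalg]$ by Proposition~\ref{reg epi=surj} and in $\Hopfc$ by the factorization results of \cite{GranSterckVercruysse}; without it the transfer of pullback-stability would not go through.
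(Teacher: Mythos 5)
Your argument is correct and matches the paper's proof in its essential step: pullback-stability of regular epimorphisms is transferred along the limit-preserving forgetful functor to the regular category $\Hopfc$, using that regular epimorphisms coincide with surjections on both sides (Proposition \ref{reg epi=surj} and the results of \cite{GranSterckVercruysse}). The only cosmetic difference is that you verify regularity via coequalizers of kernel pairs while the paper invokes the regular epi-mono factorization of Proposition \ref{factorization}, which amounts to the same thing.
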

\begin{proof}
    We have already proved the existence of a regular epi-mono factorization (\ref{factorization}). Furthermore, regular epimorphisms are stable under pullback, because the forgetful functor to the regular category $\Hopfc$ preserves limits and preserves and reflects regular epimorphisms (this follows from Proposition \ref{reg epi=surj}).
\end{proof}

We can now put together the previous propositions and obtain the following result.

\begin{Thm}\label{semiabelianess of omega hopf algebras}
    Let $[\Th,\Coalg]$ be a category of $\Omega$-Hopf algebras. Then $[\Th,\Coalg]$ is semi-abelian.   
\end{Thm}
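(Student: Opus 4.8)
The plan is to verify the four defining features of a semi-abelian category — pointed, Barr-exact, protomodular, with binary coproducts — most of which are already at hand. Being pointed is Proposition~\ref{Characterization of Omega-groups}, protomodularity is Proposition~\ref{category with forgetful homological}, binary coproducts were constructed in Subsection~\ref{coproducts construction}, and regularity is the proposition immediately above. Since a semi-abelian category is precisely a pointed, Barr-exact, protomodular category with binary coproducts, and Barr-exactness amounts to regularity together with the effectiveness of every internal equivalence relation, the only remaining task is to show that every equivalence relation in $[\Th,\Coalg]$ is the kernel pair of its coequalizer.

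I would prove this by transporting the problem along the forgetful functor $U\colon[\Th,\Coalg]\to\Hopfc$, which is monadic (hence conservative), preserves all limits and the zero object (being a right adjoint between pointed categories), and preserves regular epimorphisms since on both sides these are exactly the surjections (Proposition~\ref{reg epi=surj}); I identify the underlying spaces of $A$ and $U(A)$ throughout. Let $(R,r_0,r_1)$ be an equivalence relation on $A$ in $[\Th,\Coalg]$, with \emph{normalization} $n\colon N\hookrightarrow A$ — a normal monomorphism, as in any pointed protomodular category — and put $I\coloneqq\{r_0(x)-r_1(x)\mid x\in R\}$. Applying $U$ gives an equivalence relation on $A$ in $\Hopfc$ whose normalization is $N$ again (the normalization is a finite-limit-and-zero construction, preserved by $U$). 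As $\Hopfc$ is semi-abelian, hence Barr-exact, this relation is effective: by the description of coequalizers in Subsection~\ref{coequalizer construction} specialized to $\Hopfc$, its coequalizer there is the quotient $p\colon A\to A/\langle I\rangle_{\Th_\Grp}$, the relation $U(R)$ is the kernel pair of $p$, so $\Hker p$ — the normalization of this kernel pair — equals $N$, and by the Newman correspondence (used already in Proposition~\ref{factorization}) we get $\langle I\rangle_{\Th_\Grp}=\ker p=A\,N^+$. On the other hand the coequalizer of $(r_0,r_1)$ in $[\Th,\Coalg]$ is $q\colon A\to A/\langle I\rangle_\Th$, and a priori only $\langle I\rangle_{\Th_\Grp}\subseteq\langle I\rangle_\Th$. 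The decisive step is to close this gap: applying the one-to-one correspondence between normal monomorphisms into $A$ and suitably closed sub-$\Th$-coalgebras (the proposition just before Proposition~\ref{image of mono is mono}) to the normal subobject $N$ yields $\langle N^+\rangle_\Th=A\,N^+$, so $\langle I\rangle_{\Th_\Grp}=A\,N^+=\langle N^+\rangle_\Th$ is already a $\Th$-ideal; being a $\Th$-ideal containing $I$ it must contain $\langle I\rangle_\Th$, whence $\langle I\rangle_\Th=\langle I\rangle_{\Th_\Grp}$ and $U(q)=p$ up to isomorphism. Therefore $U(R)$ is the kernel pair of $U(q)$, so $U$ carries the canonical comparison from $R$ to the kernel pair $R'$ of $q$ to an isomorphism; since $U$ is conservative, $R\cong R'$ and $R$ is effective.

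This establishes Barr-exactness, and combined with the three properties recalled at the outset it follows that $[\Th,\Coalg]$ is semi-abelian. The main obstacle is precisely the identification $\langle I\rangle_\Th=\langle I\rangle_{\Th_\Grp}$ — that enlarging the Hopf ideal generated by the difference set of an equivalence relation to the full $\Th$-ideal does not change the quotient, so that coequalizers in $[\Th,\Coalg]$ are computed ``Hopf-algebraically''. This is exactly where the hypothesis of working with a theory of $\Omega$-groups is used, through the control of normal subobjects provided by the correspondence proposition and by Proposition~\ref{image of mono is mono}; everything else is bookkeeping on top of the regularity, protomodularity and cocompleteness already established.
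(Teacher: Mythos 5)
Your route is genuinely different from the paper's. The paper never checks Barr-exactness directly: having shown that $[\Th,\Coalg]$ has finite limits and colimits, is homological (pointedness from Proposition \ref{Characterization of Omega-groups}, protomodularity from Proposition \ref{category with forgetful homological}, regularity from Propositions \ref{factorization} and \ref{reg epi=surj}), and that direct images of kernels along regular epimorphisms are again kernels (Proposition \ref{image of mono is mono}), it simply invokes the characterization 3.7 of \cite{JanelidzeMarkiTholen}. You instead verify effectiveness of every internal equivalence relation by hand, transporting the relation along the monadic forgetful functor $U\colon[\Th,\Coalg]\to\Hopfc$, using exactness of $\Hopfc$, the Newman correspondence and the comparison of generated ideals $\langle I\rangle_\Th=\langle I\rangle_{\Th_\Grp}$. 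This is longer, but it makes explicit something the paper's citation hides, namely that coequalizers of equivalence relations in $[\Th,\Coalg]$ are computed exactly as in $\Hopfc$.

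There is, however, one genuine gap. At the decisive step you apply the unlabelled correspondence proposition (the one preceding Proposition \ref{image of mono is mono}) to $N$ to conclude $\langle N^+\rangle_\Th=A\,N^+$; but the proof of that proposition establishes this equality only for $B=\Hker f$, the kernel of a morphism of $[\Th,\Coalg]$, whereas all you know about $N$ is that it is the normalization of the equivalence relation $R$, i.e.\ a normal monomorphism in Bourn's sense. In a merely homological category normal monomorphisms need not be kernels (topological groups already provide counterexamples), and the coincidence of the two notions is essentially a consequence of the exactness you are in the middle of proving, so as written this step is unjustified. Fortunately the gap closes with the paper's own tools: since $R$ is reflexive, $r_1$ is a split epimorphism, hence a surjective regular epimorphism (Proposition \ref{reg epi=surj}), and $N$ may be identified with the direct image $r_1[\Hker r_0]$ of the kernel $\Hker r_0\hookrightarrow R$; Proposition \ref{image of mono is mono} then guarantees that $N$ is a kernel in $[\Th,\Coalg]$, after which your computation $\langle I\rangle_{\Th_\Grp}=A\,N^+=\langle N^+\rangle_\Th\supseteq\langle I\rangle_\Th$ and the conservativity argument go through. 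It is worth noting that the ingredient repairing your proof is precisely the proposition on which the paper's two-line proof rests.
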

\begin{proof}
    $[\Th,\Coalg]$ has finite limits and colimits, is homological (\ref{category with forgetful homological}) and the direct image of monomorphisms is a monomorphism (\ref{image of mono is mono}). Applying 3.7 of \cite{JanelidzeMarkiTholen} we get the semi-abelianness of $[\Th,\Coalg]$.
\end{proof}

Theorem \ref{semiabelianess of omega hopf algebras} immediately applies to all the above-mentioned examples of $\Omega$-Hopf algebras. This leads to a different proof of the result about semi-abelianness of $\HBr$ obtained in \cite{GranSciandra} and to the following corollary.

\begin{Cor}\label{semiabelianess of Hradrng and Hdigrp}
The categories $\HRadRng$ and $\HDiGrp$ of Hopf radical rings and of Hopf digroups are semi-abelian.
\end{Cor}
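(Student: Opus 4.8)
The corollary is an immediate consequence of Theorem \ref{semiabelianess of omega hopf algebras} together with the observation, already recorded in Table \ref{Table of examples} and in the discussion following it, that $\HRadRng$ and $\HDiGrp$ are categories of $\Omega$-Hopf algebras. So the plan is simply to verify the hypothesis of the theorem for these two cases, i.e.\ to exhibit them as coalgebraic models of Lawvere theories of $\Omega$-groups (equivalently, by Proposition \ref{Characterization of Omega-groups}, as pointed categories of coalgebraic models equipped with an algebraic functor to $\Hopfc$).

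First I would recall that $\HDiGrp = [\Th_\DiGrp,\Coalg]$ and $\HRadRng = [\Th_\RadRng,\Coalg]$ by definition, so it suffices to check that $\Th_\DiGrp$ and $\Th_\RadRng$ are Lawvere theories of varieties of $\Omega$-groups. A digroup $(A,+,\cdot)$ has two group structures sharing the same neutral element $u$; taking $(A,+)$ as the underlying group, the remaining operations $\cdot$, the $\cdot$-inverse, and the constant $u$ (which coincides with the $+$-neutral element) all send tuples of $u$'s to $u$, so the defining condition of an $\Omega$-group is satisfied. Hence $\DiGrp$ is a variety of $\Omega$-groups and $\HDiGrp$ is a category of $\Omega$-Hopf algebras. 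A radical ring is in particular a skew brace, and $\SKB$ is a variety of $\Omega$-groups (choosing $(A,+)$ as the underlying group, the operation $\cdot$ evaluated at zeros gives zero, as does the $\cdot$-inverse); since $\RadRng$ is a Birkhoff subvariety of $\SKB$ obtained by imposing further identities without new operations, it remains a variety of $\Omega$-groups. Therefore $\HRadRng$ is a category of $\Omega$-Hopf algebras as well. Alternatively, one may invoke Proposition \ref{Characterization of Omega-groups} directly: both categories are pointed (by the constructions of Section \ref{Section Colimts}, or because the zero Hopf algebra $\K$ is initial and terminal), and the chain \eqref{Chain of Hopf inclusions} provides forgetful functors $\HRadRng\to\HBr\to\HDiGrp\to\Hopfc$, the last one being the algebraic functor induced by $\Th_\Grp\to\Th_\DiGrp$.

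Once this identification is in place, Theorem \ref{semiabelianess of omega hopf algebras} applies verbatim and yields that $\HRadRng$ and $\HDiGrp$ are semi-abelian. There is essentially no obstacle here; the only point requiring a word of care is confirming that the two group operations in a digroup genuinely share the neutral element (this is part of the definition) and that this common element is the one with respect to which all operations act as an $\Omega$-group, which is immediate. Thus the proof reduces to citing Theorem \ref{semiabelianess of omega hopf algebras} and Definition \ref{def omega hopf algebras}.

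\begin{proof}
    By Definition \ref{def omega hopf algebras}, it suffices to observe that $\HDiGrp=[\Th_\DiGrp,\Coalg]$ and $\HRadRng=[\Th_\RadRng,\Coalg]$ are categories of $\Omega$-Hopf algebras. Indeed, a digroup $(A,+,\cdot)$ is a group $(A,+)$ equipped with the further operations coming from $(A,\cdot)$; since the two group structures share the same neutral element $u$, every operation evaluated at $(u,\dots,u)$ returns $u$, so $\DiGrp$ is a variety of $\Omega$-groups. Likewise $\SKB$ is a variety of $\Omega$-groups (take $(A,+)$ as the underlying group), and $\RadRng$, being obtained from $\SKB$ by imposing additional identities without introducing new operations, is again a variety of $\Omega$-groups. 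Hence $\HDiGrp$ and $\HRadRng$ are categories of $\Omega$-Hopf algebras, and Theorem \ref{semiabelianess of omega hopf algebras} gives that they are semi-abelian.
\end{proof}
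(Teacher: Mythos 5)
Your proposal is correct and follows the same route as the paper: the corollary is obtained by observing that $\HDiGrp$ and $\HRadRng$ are categories of $\Omega$-Hopf algebras (since digroups, skew braces and radical rings are varieties of $\Omega$-groups, the two group structures sharing the same neutral element) and then applying Theorem \ref{semiabelianess of omega hopf algebras}. The only difference is that you spell out the verification that these varieties are $\Omega$-groups, which the paper leaves implicit in the discussion of Table \ref{Table of examples}.
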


Theorem \ref{semiabelianess of omega hopf algebras} paves the way for the study of non-abelian homology for categories of $\Omega$-Hopf algebras, along the lines of \cite{gransciandrahopfformulaecocommutativehopf} in which the case of cocommutative Hopf algebras is analyzed. It would also be interesting to investigate models of Lawvere theories in other monoidal categories. For instance, one may consider the category of graded coalgebras: in this setting, models of the algebraic theory of groups correspond to color Hopf algebras, whose category has been shown to be semi-abelian \cite{SciandracolorHopf}. Another possible direction concerns the study of models of algebraic theories in the category of coalgebras over a von Neumann regular ring rather than over a field \cite{Porst2011}.

\end{subsection}
\end{section}

\section*{Acknowledgement}{
I would like to express my gratitude to my supervisor, Prof. Marino Gran, for his invaluable guidance, insightful suggestions, and constant support throughout this work. I am also grateful to Andrea Sciandra for his precious feedback on an earlier version of the paper and to Prof. Matìas Menni for providing helpful comments.

The author's research is funded by a FRIA doctoral grant from the \textit{Fonds de la Recherche Scientifique --FNRS}.    
}

\printbibliography[
heading=bibintoc,
title={Bibliography}
]

@incollection{LAWVERE2014413,
title = {Algebraic Theories, Algebraic Categories, and Algebraic Functors},
editor = {J.W. Addison and L. Henkin and A. Tarski},
booktitle = {The Theory of Models},
publisher = {North-Holland},
pages = {413-418},
year = {2014},
series = {Studies in Logic and the Foundations of Mathematics},
isbn = {978-0-7204-2233-7},
doi = {https://doi.org/10.1016/B978-0-7204-2233-7.50044-7},
author = {F. W. Lawvere},
}

@article{Izquierdo, 
 title={Algebras, hyperalgebras, nonassociative bialgebras and loops}, 
 volume={208}, 
 ISSN={0001-8708}, 
 DOI={10.1016/J.AIM.2006.04.001}, 
 number={2}, 
 journal={Advances in Mathematics}, 
 author={Pérez-Izquierdo, J. M.}, 
 year={2007}, 
 pages={834-876} }

@book{borceuxbourn,
  title={Mal'cev, Protomodular, Homological and Semi-Abelian Categories},
  author={F. Borceux and D. Bourn},
  isbn={9781402019616},
  lccn={2004044171},
  series={Mathematics and Its Applications},
  year={2004},
  publisher={Springer Netherlands}
}

@article{GranRosicky,
author = {M. Gran and J. Rosicky},
year = {2004},
month = {01},
pages = {106-113},
title = {Semi-abelian monadic categories},
volume = {13},
journal = {Theory and Applications of Categories}
}

@inbook{Borceux_1994, 
    place={Cambridge}, series={Encyclopedia of Mathematics and its Applications}, 
    title={Algebraic theories}, 
    booktitle={Handbook of Categorical Algebra},
    publisher={Cambridge University Press}, 
    author={F. Borceux}, 
    year={1994},    
    pages={122–185}, 
    collection={Encyclopedia of Mathematics and its Applications}}

@article{Takeuchi1971561,
	author = {M. Takeuchi},
	title = {Free Hopf algebras generated by coalgebras},
	year = {1971},
	journal = {Journal of the Mathematical Society of Japan},
	volume = {23},
	number = {4},
	pages = {561 – 582},
	doi = {10.2969/jmsj/02340561},
	type = {Article},
}

@incollection{agore2025categoryhopfbraces,
      title={On the category of Hopf braces}, 
      author={A. Agore and A. Chirvasitu},
      year={2025},
      pubstate={toappear},
      booktitle={Proceeding of the American Mathematical Society}
      }

@ARTICLE{AgoreLim,
	author = {A. Agore},
	title = {Limits of coalgebras, bialgebras and Hopf algebras},
	year = {2011},
	journal = {Proceeding of the American Mathematical Society},
	volume = {139},
	number = {3},
	pages = {855 – 863},
	doi = {https://doi.org/10.1090/S0002-9939-2010-10542-7},
	type = {Article},
}

@article{Porst2006,
author = {H.-E. Porst},
journal = {Archivum Mathematicum},
number = {4},
pages = {419-425},
publisher = {Department of Mathematics, Faculty of Science of Masaryk University, Brno},
title = {On corings and comodules},
volume = {042},
year = {2006},
}

@article{Porst2011,
    author = {H.-E. Porst},
    title = {On subcategory of the category of Hopf algebras},
    year = {2011},
    journal = {Arabian Journal for Science and Engineering},
    volume = {36},
    pages = {1023-1029},
    doi = {10.1007/s13369-011-0090-4}
}

@article{GranSterckVercruysse,
title = {A semi-abelian extension of a theorem by Takeuchi},
journal = {Journal of Pure and Applied Algebra},
volume = {223},
number = {10},
pages = {4171-4190},
year = {2019},
issn = {0022-4049},
doi = {https://doi.org/10.1016/j.jpaa.2019.01.004},
author = {M. Gran and F. Sterck and J. Vercruysse},
}

@article{Newman,
title = {A correspondence between bi-ideals and sub-Hopf algebras in cocommutative Hopf algebras},
journal = {Journal of Algebra},
volume = {36},
number = {1},
pages = {1-15},
year = {1975},
issn = {0021-8693},
doi = {https://doi.org/10.1016/0021-8693(75)90150-7},
author = {K. Newman}
}

@article{GranLetourmyVendramin,
title = {Hopf formulae for homology of skew braces},
journal = {Journal of Pure and Applied Algebra},
volume = {229},
number = {11},
pages = {108085},
year = {2025},
issn = {0022-4049},
doi = {https://doi.org/10.1016/j.jpaa.2025.108085},
author = {M. Gran and T. Letourmy and L. Vendramin},
}

@incollection{GranSciandra,
      title={Hopf braces and semi-abelian categories}, 
      author={M. Gran and A. Sciandra},
      year={2025},
      pubstate={toappear},
      eprint={2411.19238},
      booktitle={Journal of Algebra}
      }

@article{JanelidzeMarkiTholen,
title = {Semi-abelian categories},
journal = {Journal of Pure and Applied Algebra},
volume = {168},
number = {2},
pages = {367-386},
year = {2002},
note = {Category Theory 1999: selected papers, conference held in Coimbra in honour of the 90th birthday of Saunders Mac Lane},
issn = {0022-4049},
doi = {https://doi.org/10.1016/S0022-4049(01)00103-7},
author = {G. Janelidze and L. Márki and W. Tholen},

}

@article{Zhelyabin,
title = {Universal envelopes of Malcev Algebras: Pointed Moufang bialgebras},
journal = {Siberian Mathematical Journal
},
volume = {50},
number = {6},
pages = {1011-1026},
year = {2009},
issn = {},
doi = {10.1007/s11202-009-0112-6},
author = {V. N. Zhelyabin},
}

@article{GuarnieriVendramin,
   title={Skew braces and the Yang–Baxter equation},
   volume={86},
   ISSN={1088-6842},
   DOI={10.1090/mcom/3161},
   number={307},
   journal={Mathematics of Computation},
   publisher={American Mathematical Society (AMS)},
   author={L. Guarnieri and L. Vendramin},
   year={2016},
   month=nov, pages={2519–2534} }

@article{Rump,
title = {Braces, radical rings, and the quantum Yang–Baxter equation},
journal = {Journal of Algebra},
volume = {307},
number = {1},
pages = {153-170},
year = {2007},
issn = {0021-8693},
doi = {https://doi.org/10.1016/j.jalgebra.2006.03.040},
author = {W. Rump},
}

@article{Higgins,
    author = {P. J. Higgins},
    title = {Groups with Multiple Operators},
    journal = {Proceedings of the London Mathematical Society},
    volume = {s3-6},
    number = {3},
    pages = {366-416},
    year = {1956},
    month = {07},
    issn = {0024-6115},
    doi = {10.1112/plms/s3-6.3.366},
    }

@article{GrunenfelderPare,
title = {Families parametrized by coalgebras},
journal = {Journal of Algebra},
volume = {107},
number = {2},
pages = {316-375},
year = {1987},
issn = {0021-8693},
doi = {https://doi.org/10.1016/0021-8693(87)90093-7},
author = {L. Grunenfelder and R. Paré}
}

@article{Bourn2000,
author = {D. Bourn},
journal = {Theory and Applications of Categories},
language = {eng},
pages = {206-218},
publisher = {Mount Allison University, Department of Mathematics and Computer Science, Sackville},
title = {Normal functors and strong protomodularity.},
volume = {7},
year = {2000},
}

@article{BournJanelidze,
author = {D. Bourn and G. Janelidze},
year = {2003},
pages = {143-147},
title = {Characterization of protomodular varieties of universal algebras},
volume = {11},
journal = {Theory and Applications of Categories}
}

@article{AngionoGalindoVendramin,
title = "Hopf braces and Yang-Baxter operators",
author = "I. Angiono and C. Galindo and L. Vendramin",
year = "2017",
doi = "10.1090/proc/13395",
volume = "145",
pages = "1981--1995",
journal = "Proceedings of the American Mathematical Society",
issn = "0002-9939",
publisher = "American Mathematical Society",
}

@article{PorstFreeInternalGroup,
title = "Free Internal Group",
author = "H.-E. Porst",
year = "2012",
doi = "10.1007/s10485-010-9222-3",
volume = "20",
pages = "31--34",
journal = "Applied Categorical Structures",
issn = "1572-9095",
publisher = "Springer",
}

@book{Adamek_Rosicky_1994, 
place={Cambridge}, 
series={London Mathematical Society Lecture Note Series}, 
title={Locally Presentable and Accessible Categories}, 
publisher={Cambridge University Press}, author={J. Adámek and J. Rosick\'y}, 
year={1994}, 
collection={London Mathematical Society Lecture Note Series}
}

@inproceedings{Bourn1991Normalization,
  title={Normalization equivalence, kernel equivalence and affine categories},
  author={D. Bourn},
  place={Category Theory, Como, 1990},
  note={Lect. Notes Math},
  vol={1488},
  publisher={Springer},
  year={1991},
  pages={42--65}
}

@article{Andruskiewitsch,
title = "Notes on extensions of Hopf algebras",
author = "N. Andruskiewitsch",
year = "1996",
volume = "48",
pages = "2--42",
doi = "10.4153/CJM-1996-001-8",
journal = "Canadian Journal of Mathematics",
publisher = "Cambridge University Press",
}

@article{Lawvere2008,
author = {F.W. Lawvere},
journal = {Theory and Applications of Categories [electronic only]},
keywords = {topos; Frobenius},
language = {eng},
pages = {497-503},
publisher = {Mount Allison University, Department of Mathematics and Computer Science, Sackville},
title = {Core varieties, extensivity, and rig geometry.},
volume = {20},
year = {2008},
}

@article{Sweedler,
	author = {M.E. Sweedler},
	title = {Hopf algebras},
	year = {1969},
	journal = {Benjamin New York},
}

@article{SciandracolorHopf,
title = {Semi-abelian condition for color Hopf algebras},
journal = {Journal of Pure and Applied Algebra},
volume = {228},
number = {9},
pages = {107677},
year = {2024},
issn = {0022-4049},
doi = {https://doi.org/10.1016/j.jpaa.2024.107677},
url = {https://www.sciencedirect.com/science/article/pii/S0022404924000744},
author = {A. Sciandra},
}

@misc{gransciandrahopfformulaecocommutativehopf,
      title={Hopf formulae for cocommutative Hopf algebras}, 
      author={M. Gran and A. Sciandra},
      year={2025},
      eprint={2509.09992},
      archivePrefix={arXiv},
      url={https://arxiv.org/abs/2509.09992}, 
}

@book{BurrisSankappanavar1981,
  author = {S. Burris and H. P. Sankappanavar},
  owner = {23},
  publisher = {Springer},
  title = {A Course in Universal Algebra},
  year = 1981
}

@article{Porst09102015,
author = {Hans-E. Porst},
title = {The formal theory of Hopf algebras Part II: The case of Hopf algebras},
journal = {Quaestiones Mathematicae},
volume = {38},
number = {5},
pages = {683--708},
year = {2015},
publisher = {Taylor \& Francis},
doi = {10.2989/16073606.2014.981737},
}

@article{AgoreConstructingHB,
author = {A. Agore},
title = {Constructing Hopf braces},
journal = {International Journal of Mathematics},
volume = {30},
number = {02},
pages = {1850089},
year = {2019},
doi = {10.1142/S0129167X18500891},
}

\end{document}